\documentclass[12pt]{amsart}
\usepackage{a4wide,enumerate,color,graphicx}
\usepackage[normalem]{ulem}
\usepackage{amsmath}
\usepackage{scrextend} 
\usepackage{tikz}
\allowdisplaybreaks

\let\pa\partial  
\let\na\nabla  
\let\eps\varepsilon  
\newcommand{\N}{{\mathbb N}}  
\newcommand{\R}{{\mathbb R}} 
 
\newcommand{\diver}{\operatorname{div}}  
\newcommand{\dom}{{\mathcal O}}
\newcommand{\DD}{{\mathcal D}}
\newcommand{\E}{{\mathbb E}}
\newcommand{\Prob}{\mathbb{P}}

\newcommand\dela[1]{}

\newtheorem{theorem}{Theorem}   
\newtheorem{lemma}[theorem]{Lemma}   
   
\newtheorem{proposition}[theorem]{Proposition}   
\newtheorem{remark}[theorem]{Remark}   
\newtheorem{corollary}[theorem]{Corollary}  
\newtheorem{definition}{Definition}

 
\begin{document}  

\title[Global martingale solutions for quasilinear SPDEs]{Global martingale solutions for quasilinear SPDEs \linebreak via the boundedness-by-entropy method} 

\author[G. Dhariwal]{Gaurav Dhariwal}
\address{Institute for Analysis and Scientific Computing, Vienna University of  
	Technology, Wiedner Hauptstra\ss e 8--10, 1040 Wien, Austria}
\email{gaurav.dhariwal@tuwien.ac.at} 

\author[F. Huber]{Florian Huber}
\address{Institute for Analysis and Scientific Computing, Vienna University of  
	Technology, Wiedner Hauptstra\ss e 8--10, 1040 Wien, Austria}
\email{florian.huber@asc@tuwien.ac.at}

\author[A. J\"ungel]{Ansgar J\"ungel}
\address{Institute for Analysis and Scientific Computing, Vienna University of  
	Technology, Wiedner Hauptstra\ss e 8--10, 1040 Wien, Austria}
\email{juengel@tuwien.ac.at} 

\author[C. Kuehn]{Christian Kuehn}
\address{Department of Mathematics, Technical University of Munich, Boltzmannstr. 3,
85748 Garching bei M\"unchen}
\email{ckuehn@ma.tum.de} 

\author[A. Neam\c tu]{Alexandra Neam\c tu}
\address{Department of Mathematics, Technical University of Munich, Boltzmannstr. 3,
85748 Garching bei M\"unchen}
\email{alexandra.neamtu@tum.de} 

\date{\today}


\begin{abstract}
The existence of global-in-time bounded martingale solutions to a general class of
cross-diffusion systems with multiplicative Stratonovich noise is proved. The equations
describe multicomponent systems from physics or biology with volume-filling effects 
and possess a formal gradient-flow or entropy structure. 
This structure allows for the derivation of almost surely 
positive lower and upper bounds for the stochastic processes. 
The existence result holds under some assumptions on the interplay between the
entropy density and the multiplicative noise terms.
The proof is based on a stochastic Galerkin method, a Wong--Zakai type approximation 
of the Wiener process, the boundedness-by-entropy
method, and the tightness criterion of Brze\'{z}niak and coworkers. 
Three-species Maxwell--Stefan systems and $n$-species biofilm models are examples
that satisfy the general assumptions.
\end{abstract}

\keywords{Cross diffusion, martingale solutions, entropy method, tightness, 
Skorokhod--Jakubowski theorem, Maxwell--Stefan systems, biofilm model.}  
 
\subjclass[2000]{60H15, 35R60, 35Q35, 35Q92.}

\maketitle


\section{Introduction}

Cross-diffusion systems arise in many application areas like 
fluid dynamics of mixtures, electrochemistry, cell biology, and biofilm modeling. 
Cross diffusion occurs if the gradient in the concentration of 
one {species} induces a flux of another {species}. In many applications, 
{volume-filling} effects need to be taken into account because of the finite size of
the species or components, which means that the unknowns are
volume fractions which sum up to one. Such cross-diffusion systems with
volume filling {in deterministic setting} were analyzed in, for instance, \cite{BrCh14,BDPS10,DGM13} 
in the context of gas mixtures or ion transport through membranes. The
boundedness-by-entropy method \cite{Jue15} provides a framework for the
existence analysis and the proof of positive
lower and upper bounds for the concentrations.
The aim of this paper is to extend this technique to the stochastic setting. 
We prove the global-in-time existence of martingale solutions to cross-diffusion
systems with volume filling and Stratonovich stochastic forcing.


\subsection{Model equations}

The dynamics of the concentration (or volume fraction)
vector $u=(u_1,\ldots,u_n)$ is given by
\begin{equation}\label{1.eq}
  \textnormal{d}u_i - \diver\bigg(\sum_{j=1}^n A_{ij}(u)\na u_j\bigg)\textnormal{d}t
	= \sum_{j=1}^n\sigma_{ij}(u)\circ\textnormal{d}W_j(t)\quad\mbox{in }\dom,\ t>0,
\end{equation}
where $i=1,\ldots,n$ and $\dom\subset\R^d$ ($1\le d\le 3$) is a bounded domain,
supplemented with the no-flux boundary and initial conditions,
\begin{equation}\label{1.bic}
  \sum_{j=1}^n A_{ij}(u)\na u_j\cdot\nu = 0\quad\mbox{on }\pa\dom,\ t>0, \quad
	u_i(0)=u_i^0\quad\mbox{in }\dom,\ i=1,\ldots,n.
\end{equation}
Here, $\nu$ is the exterior
unit normal vector to $\pa\dom$ and $u_i^0$ is a possibly random initial datum.
The concentrations $u_i(\omega,x,t)$
are defined on $\Omega\times\dom\times[0,T]$, where
$\omega\in\Omega$ represents the stochastic variable, $x\in\dom$ the spatial
variable, and $t\in[0,T]$ the time. Together with the solvent concentration
$u_{n+1}$, the concentrations fill up the domain, i.e.,
$\sum_{i=1}^{n+1}u_i=1$. We call this assumption {\em volume filling}.
The matrix $A(u)=(A_{ij}(u))$ is the
diffusion matrix, $\sigma(u)=(\sigma_{ij}(u))$ is the multiplicative noise term,
and $W=(W_1,\ldots,W_n)$ is an $n$-dimensional Wiener process. 
Details on the stochastic framework will be given in Section \ref{sec.main}.
The stochastic forcing represents external perturbations or a lack of knowledge
of certain physical or biological parameters.

Equations \eqref{1.eq} can be equivalently formulated in the It\^o form
\cite[Section 6.5]{Eva13}:
$$
  \textnormal{d}u_i - \diver\bigg(\sum_{j=1}^n A_{ij}(u)\na u_j\bigg)\textnormal{d}t
	= \sum_{j=1}^n\sigma_{ij}(u) \textnormal{d}W_j(t) 
	+ \frac12\bigg(\sum_{k=1}^n\sum_{j=1}^n\sigma_{kj}(u)
	\frac{\pa\sigma_{ij}}{\pa u_k}(u)\bigg)\textnormal{d}t,
$$
where $i=1,\ldots,n$, and this formulation will be also used in our analysis.
The formulation of \eqref{1.eq} in the Stratonovich form comes purely from a modeling viewpoint. In fact, our analysis uses the Wong--Zakai approximation, where we approximate the noise by smooth functions, thus obtaining a system of PDEs, which in {turn} converge in the limit to stochastic differential equations in the Stratonovich form. 
Alternatively, we could consider \eqref{1.eq} in the It\^o form and
include the correction term in the formulation. In fact, considering the It\^o formulation would enable us to treat more general infinite-dimensional noise
{but increasing the already involved technicalities}.
{Therefore,} this aspect will be discussed in a future work. 

Quasilinear stochastic partial differential equations (SPDEs) (e.g.\ the porous-media equation) have been extensively analyzed using the theory of (locally) monotone operators\,\cite{BaPrRo09, Gess12, LiRo10, PrRo07} or approximating the corresponding coefficients by locally monotone ones \cite{HoZh17}.  Recently, there has been a growing interest in developing various solution concepts for quasilinear SPDEs such as: kinetic\,\cite{DeHoVo16, FeGe19, GeHo18}, strong (in the probabilistic sense) and weak (in the PDE sense)\,\cite{DDMH15,HoZh17}, entropy\,\cite{DaGeGe19}, martingale\,\cite{DeHoVo16, DJZ19} or (pathwise) mild solutions\,\cite{KuNe18}. We mention that solution concepts for certain quasilinear SPDEs have been developed also via rough paths theory\,\cite{OtWe19}, paracontrolled calculus\,\cite{BDH19, FuGu19}, or regularity structures\,\cite{GeHa19}.
To our best knowledge, the techniques employed in this context heavily rely on the fact that the diffusion matrix is symmetric and/or positive-(semi)definite\,\cite{DDMH15, DeHoVo16, HoZh17, Hor17}.
However, in many applications, the diffusion matrix does not satisfy these requirements, i.e., it is neither symmetric nor positive
semi-definite. Therefore, most of the techniques available in the literature on quasilinear SPDEs do not apply  
or allow only local-in-time solutions\,\cite{Hor17, KuNe18}. 
The main goal of this work is to prove the {\em global-in-time existence} of martingale solutions for quasilinear SPDEs whose diffusion matrix is {\em neither symmetric nor positive semi-definite, but admits a certain structure}, which we precisely describe below.

It turns out that deterministic cross-diffusion systems arising from (thermodynamic) applications often
have a special structure, a so-called entropy or formal gradient-flow structure,
which can be exploited for the existence analysis. This means that there exists 
an entropy density $h:[0,\infty)^n\to\R$ such that the deterministic analog of 
\eqref{1.eq} can be formulated in terms of the entropy variables
$w_i:=\pa h/\pa u_i$ as
\begin{equation}\label{1.B}
  \pa_t u_i(w) - \diver\bigg(\sum_{j=1}^n B_{ij}(w)\na w_j\bigg) = 0,
	\quad i=1,\ldots,n,
\end{equation}
and the so-called Onsager matrix 
$B(w)=A(u(w))h''(u(w))^{-1}$ is positive semi-definite, where
$h''(u)^{-1}$ denotes the inverse of the Hessian of $h$ and $u=u(w)=(h')^{-1}(w)$
is now interpreted as a vector-valued function of $w$,
assuming that the inverse of $h'$ exists. An example is the Boltzmann-type
entropy density {$h_B(u)=\sum_{i=1}^{n+1}(u_i(\log u_i-1)+1)$}. 
Using $w_i$ as a test function in \eqref{1.B}, a formal computation leads to
\begin{equation}\label{1.ei}
  \frac{\textnormal{d}}{\textnormal{d}t}\int_\dom h(u)\,\textnormal{d}x + \int_\dom\na u:h''(u)A(u)\na u \,\textnormal{d}x = 0,
\end{equation}
where ``:'' denotes the Frobenius matrix product.
Since $B(w)$ is positive semi-definite,
so is $h''(u)A(u)$, and we infer that $t\mapsto\int_\dom h(u(t))\,\textnormal{d}x$ is a Lyapunov
functional along solutions to \eqref{1.B}. 

The volume-filling condition $\sum_{i=1}^{n+1} u_i=1$ implies that the solvent
concentration can be replaced by the other concentrations $u_i\ge 0$ according to 
$u_{n+1}=1-\sum_{i=1}^n u_i$. This means that the concentration vector 
$u=(u_1,\ldots,u_n)$ is an element of the Gibbs simplex
$\DD=\{u\in {(0,1)}^{n}:\sum_{i=1}^n u_i<1\}$. 
If $h$ is invertible on $\DD$, we can define
$u(w)=(h')^{-1}(w)$, and this function maps $\R^n$ to $\DD$. Thus, if $w(x,t)$
is a solution to \eqref{1.B}, $u(w(x,t))\in\DD$ is componentwise
positive and bounded from above.
This provides $L^\infty$ estimates without using
a maximum principle which generally cannot be applied to cross-diffusion
systems. In this paper, we show that this idea can be extended to the
stochastic case, allowing for $L^\infty$ bounds almost surely.

Examples for cross-diffusion systems \eqref{1.eq} with volume filling are
the Maxwell--Stefan equations and certain biofilm models (see Section
\ref{sec.ex} for details). For fluid mixtures with three components, 
the Maxwell--Stefan diffusion matrix equals
\begin{align*}
  & A(u) = \frac{1}{a(u)}\begin{pmatrix} 
	d_2+(d_0-d_2)u_1 & (d_0-d_1)u_1 \\ (d_0-d_2)u_2 & d_1+(d_0-d_1)u_2 \end{pmatrix}, \\
	&\mbox{where }a(u)=d_0d_1u_1 + d_0d_2u_2 + d_1d_2u_3,
\end{align*}
and $d_i>0$ for {$i=0,1,2$}. This matrix is generally non-symmetric and 
not positive definite, but its eigenvalues are positive (this allows for
local smooth deterministic solutions; see \cite{Ama90}).
The first global existence
result for deterministic Maxwell--Stefan equations was proved in \cite{GiMa98} for initial data
around the constant equilibrium state. The existence of local classical solutions
was shown in \cite{Bot11}. The entropy structure was revealed in \cite{JuSt13},
and a general global existence result was proved. 
Other cross-diffusion models with volume filling arise in ion-transport and
biofilm modeling \cite{DMZ19,GeJu18}.
A general class of volume-filling systems was formally 
derived in \cite{ZaJu17} from a random walk on a lattice. 

In the stochastic setting, we need to overcome some technical obstacles. 
First, since the diffusion matrix is not symmetric and not positive definite, standard
semigroup theory is not applicable. Second, the application
of the It\^o formula to derive the stochastic analog of the entropy identity 
\eqref{1.ei} requires that
the entropy density {be} an element of $C^2(\overline{\DD})$ 
which is usually not the case.
For instance, the Boltzmann-type entropy density satisfies 
$\pa^2 h_B/\pa u_i^2=1/u_i+1/u_{n+1}$ which is undefined when $u_i=0$ or $u_{n+1}=0$.
Third, the system \eqref{1.B} is approximated in \cite{Jue15} by the implicit
Euler discretization which is not compatible with the stochastic term
(neither in It\^o nor in Stratonovich form). {We point out} that the implicit Euler discretization, which is implemented in \cite{Jue15}, could be avoided by introducing an additional regularization, hence avoiding the incompatibility issue, but this idea needs to be explored further.  

Our key idea is to approximate the noise by a Wong--Zakai type argument
and the space by a stochastic Galerkin method. 
This results in a system of ordinary differential
equations which can be treated by the boundedness-by-entropy method \cite{Jue15}.
The limit of vanishing Wong--Zakai parameter requires also the existence of
solutions to a Galerkin stochastic differential system. 
This is proved by a fixed-point argument 
up to a stopping time $\tau_R>0$, i.e., up to the first 
time a certain norm 
of the solution is larger than some $R>0$. Estimates uniform in the Galerkin 
dimension $N\in\N$ are derived from an entropy inequality, 
which needs a regularization $h_\delta$ of the entropy density $h$, such that 
$h_\delta\in C^2(\overline{\DD})$ with $\delta>0$.
The final step are the limits $\delta\to 0$, $R\to\infty$, and $N\to\infty$.
Details of this procedure are given in Section \ref{sec.ideas}.


\subsection{Notation and stochastic framework}
\label{sec.notstoch}

Let $\dom\subset\R^d$ ($d\ge 1$) be a bounded domain.
The usual Lebesgue and Sobolev spaces are denoted by $L^p(\dom)$ and
$W^{k,p}(\dom)$, respectively, where $p\in[1,\infty]$, $k\in\N$, and we set
$H^k(\dom)=W^{k,2}(\dom)$.  The norm of a function $u=(u_1,\ldots,u_n)
\in L^2(\dom;\R^n)$ is understood as $\|u\|_{L^2(\dom)}^2
=\sum_{i=1}^n\|u_i\|_{L^2(\dom)}^2$, and we use this notation also for other vector- 
or matrix-valued functions. We write $\langle u,v\rangle$ for the dual
product between $H^3(\dom)'$ and $H^3(\dom)$. We use the same notation 
if $u$, $v\in L^2(\dom)$, and in this case, $\langle u,v\rangle=\int_\dom uv\,\textnormal{d}x$.
In the vector-valued case, we have $\langle u,v\rangle=\sum_{i=1}^n\int_\dom
u_iv_i\,\textnormal{d}x$ for $u$, $v\in L^2(\dom;\R^n)$. 
The set $\DD=\{u\in{(0,1)^n}:\sum_{i=1}^{n}u_i< 1\}$ 
is the Gibbs simplex in $\R^n$, and we set
$u_{n+1}:=1-\sum_{i=1}^n u_i>0$ if $u\in\DD$.

Let $(\Omega,\mathcal{F},\Prob)$ be a probability space endowed with a complete
right-continuous filtration $\mathbb{F}=(\mathcal{F})_{t\ge 0}$ and let $H$ be
a Hilbert space. The space $L^2(\Omega;H)$ consists of all $H$-valued random variables
$u$ such that
$$
  \E\|u\|_H^2 := \int_\Omega\|u(\omega)\|_H^2\Prob(\textnormal{d}\omega) < \infty.
$$
Let $(\tilde{\eta}_k)_{k = 1}^n$ be the canonical basis of $\R^n$. We denote by
$$
  \mathcal{L}_2(\R^n;L^2(\dom)) := \bigg\{L:\R^n\to L^2(\dom)\mbox{ linear continuous: }
	\sum_{k=1}^n\|L\tilde{\eta}_k\|_{L^2(\dom)}^2 < \infty\bigg\}
$$
the space of Hilbert--Schmidt operators from $\R^n$ to $L^2(\dom)$ endowed with
the norm
$$
  \|L\|_{\mathcal{L}_2(\R^n;L^2(\dom))}^2 := \sum_{k=1}^n\|L\tilde{\eta}_k\|_{L^2(\dom)}^2.
$$
The multiplicative noise term $\sigma: \Omega \times[0,T] \times L^2(\dom) \ni 
(\omega, t, u) \to\sigma(\omega, t, u) \in \R^{n\times n}$
with $\sigma=(\sigma_{ij})_{i,j=1,\ldots,n}$ 
is assumed to be $\mathcal{B}(L^2(\dom)\times[0,T])\otimes\mathcal{F};
\mathcal{B}(\mathcal{L}_2(\R^n;L^2(\dom)))$-measurable and $\mathbb{F}$-adapted.


\subsection{Assumptions and main result}\label{sec.main}

We impose the following assumptions.

\begin{labeling}{(A44)}
\item[(A1)] Domain: $\dom\subset\R^d$ ($d\le 3$) is a bounded domain with Lipschitz
boundary.

\item[(A2)] Initial datum: $u^0\in L^2(\Omega;L^\infty(\dom))$ is 
$\mathcal{F}_0$-measurable and $u_i^0(x)\in\DD$ for a.e.\ $x\in\dom$ $\Prob$-a.s.,
$i=1,\ldots,n$.

\item[(A3)] Diffusion matrix: $A=(A_{ij})\in C^0(\overline{\DD};\R^{n\times n})$ 
is Lipschitz continuous.

\item[(A4)] 
Multiplicative noise $\sigma:L^{2}(\mathcal{O})\to\mathcal{L}_2(\R^n;L^2(\dom))$
satisfies for some constant $C_\sigma>0$ and any $u\in L^2(\dom)$, $i,j,k=1,\ldots,n$,
$$
  \bigg\|\frac{\pa\sigma_{ij}}{\pa u^k}(u)\bigg\|_{\mathcal{L}(L^2(\dom);L^2(\dom))}
  \le C_\sigma.
$$

\item[(A5)] Entropy density: (i) There exists a convex function $h\in C^2(\DD;[0,\infty))
\cap C^0(\overline{\DD};[0,\infty))$ such that its derivative $h':\DD\to\R^n$ is 
invertible; (ii) there exist $c_{h}>0$,
$0 \le m < 1$ such that for all $u\in\DD$, $z\in\R^n$,
\begin{equation*}
  z^\top h''(u)A(u) z \ge c_{h}\sum_{i=1}^n \frac{z_i^2}{u_i^{2m}}.
\end{equation*}

\item[(A6)] Interaction of entropy density and noise: 
There exists $C_{h}>0$ such that for all $u\in\DD$,
\begin{align*}
  \max_{j=1,\ldots,n}\bigg|\sum_{i=1}^n\frac{\pa h}{\pa u_i}(u)\sigma_{ij}(u)\bigg|
	&+ \bigg|\sum_{i,j,k=1}^n\sigma_{kj}(u)
	\frac{\pa\sigma_{ij}}{\pa u_k}(u)\frac{\pa h}{\pa u_i}(u)\bigg| \\
	&{}+ \bigg|\sum_{i,j,k=1}^n\sigma_{ik}(u)\frac{\pa^2 h(u)}{\pa u_i\pa u_j}
	\sigma_{jk}(u)\bigg| \le C_h.
\end{align*}
\item[(A7)] Approximation of the entropy density:
Let 
$$ 
  [u_i]_\delta = \frac{u_i+\delta/n}{1+\delta}\quad\mbox{for }i=1,\ldots,n
$$
and set $[u]_\delta=([u_1]_\delta,\ldots,[u_n]_\delta)$
for $u\in\overline{\DD}$. It holds that for all $u\in\DD$ and $z\in\R^n$,
$$
  z^\top h''([u]_\delta)A(u)z - c_h\sum_{i=1}^{{n}}\frac{z_i^2}{[u_i]_\delta^{2m}}
	\ge z^\top R_\delta(u)z,
$$
where $R_\delta(u) \in \R^{n\times n}$ is a correction matrix that appears as a result 
of the compatibility of the regularized entropy $h([u]_\delta)$ with Assumption (A5ii), 
and {it holds that} $R_\delta(u)\to 0$ as $\delta\to 0$ uniformly in 
$\overline{\DD}$. 
\end{labeling}

\begin{remark}[Discussion of the assumptions]\rm
Assumptions (A1)--(A3), (A5) are essentially
the same conditions imposed in the (deterministic)
boundedness-by-entropy method \cite{Jue15}.
We assume additionally that the diffusion matrix is Lipschitz continuous, which
is needed to apply classical existence results for stochastic differential equations (see, e.g., \cite{PrRo07}). 
Assumption (A5ii) means that
the Onsager matrix is positive definite but not necessarily uniformly in $u$.
It provides gradient estimates for $u_i^{1-m}$, i.e., the diffusion matrix
has a fast-diffusion-type degeneracy. Assumption (A4) implies global Lipschitz
continuity for the multiplicative noise term, which is a standard condition
for SPDEs; see ,e.g.,\cite{PrRo07}. 
Assumption (A6) allows us to deal with the stochastic part when we derive the
entropy estimate. 
The idea is that the multiplicative noise is chosen in order to compensate possible
singularities of $h'(u)$ and $h''(u)$.
Finally, Assumption (A7) is needed since generally $h$ is not a $C^2(\overline{\DD})$
function and cannot be used in the It\^o lemma, whereas its regularization
$h_\delta(u)=h([u]_\delta)$ is a $C^2(\overline{\DD})$ function and therefore
admissible in the It\^o lemma. We suppose that $h_\delta$ is compatible with
Assumption (A5ii).
We present two examples from applications fulfilling Assumptions (A3)--(A7)
in Section \ref{sec.ex}.
\qed
\end{remark}

\begin{remark}[Extensions]\rm
{Our setting can be slightly generalized in different directions. 
The space dimension $d$ can be arbitrarily large. The condition
$d\le 3$ is needed to conclude the continuous embedding $H^3(\dom)\hookrightarrow
W^{1,\infty}(\dom)$. For general $d\ge 1$, we need to work with $H^s(\dom)$
with $s>1+d/2$ instead of $H^3(\dom)$.
We may include a nonlinear source term $F(u)$ (satisfying standard local Lipschitz 
continuity assumptions) which additionally interacts with the corresponding entropy
density \,\cite[Assumption H3, p.~86]{Jue16}, namely}
$$
  {\int_{\dom}F(u)\cdot h'(u)\,\textnormal{d}x
	\leq C_{F}\left(1+\int_{\dom}h(u)\,\textnormal{d}x\right).}
$$
{Moreover, we may allow for random initial data, i.e., we may prescribe an initial
probability measure instead of a given initial data. We refer to 
\cite[Remark 18]{DJZ19} for details. 
We consider only finite-dimensional Wiener processes instead of} {infinite-dimensional} 
{ones because we need to quantify the interaction of the entropy density and noise terms 
in Assumption (A6). Our technique also works with} {(trace-class) $Q$-Wiener processes} {
but the proof becomes very technical without introducing new ideas, 
which is the reason why we restrict ourselves to the finite-dimensional case.}
\qed
\end{remark}

Our main result is the global-in-time existence of martingale solutions to
\eqref{1.eq}--\eqref{1.bic}. First, we make precise the definition of martingale
solutions.

\begin{definition}[Global martingale solution]
For any fixed $T > 0$, the triple $(\widetilde U,\widetilde W,\widetilde u)$ is a 
{\em global martingale solution} to \eqref{1.eq}--\eqref{1.bic} if 
$\widetilde U=(\widetilde\Omega,\widetilde{\mathcal{F}},\widetilde{\Prob},
\widetilde{\mathbb{F}})$ is a stochastic basis with filtration 
$\widetilde{\mathbb{F}}=(\widetilde{\mathcal{F}}_t)_{t\in[0,T]}$, $\widetilde W$
is an $\R^n$-valued Wiener process {on this filtered probability space}, and $\widetilde u(t)
=(\widetilde u_1(t),\ldots,\widetilde u_n(t))$ is a progressively measurable
stochastic process for all $t\in[0,T]$ such that for all $i=1,\ldots,n$,
$$
  \widetilde u_i\in L^2(\widetilde\Omega;C^0([0,T];L^2_w(\dom)))\cap
	L^2(\widetilde\Omega;L^2(0,T;H^1(\dom))),
$$
the law of $\widetilde u_i(0)$ is the same as for $u_i^0$,
and $\widetilde u$ satisfies for all $\phi\in H^1(\dom)$ and $i=1,\ldots,n$,
\begin{align*}
  \int_\dom\widetilde u_i(t)\phi\,\,\textnormal{d}x
	&= \int_\dom\widetilde u_i(0)\phi\,\,\textnormal{d}x
	+ \sum_{j=1}^n\int_0^t\int_\dom A_{ij}(\widetilde u(s))\na \widetilde u_j(s)
	\cdot\na\phi\,\,\textnormal{d}x\,\textnormal{d}s \\
	&\phantom{xx}+ \sum_{j=1}^n\int_\dom\bigg(\int_0^t\sigma_{ij}(\widetilde u(s))
	\circ \textnormal{d}\widetilde W_j(s)\bigg)\phi\,\,\textnormal{d}x.
\end{align*}
\end{definition}

Here, $C^0([0,T];L_w^2(\dom))$ is the space of weakly continuous functions
$u:[0,T]\to L^2(\dom)$ such that $\sup_{0<t<T}\|u(t)\|_{L^2(\dom)}<\infty$.

\begin{theorem}[Existence of a global martingale solution]\label{thm.ex}
Let Assumptions (A1)--(A7) hold and let $T>0$. Then there exists a global martingale
solution to \eqref{1.eq}--\eqref{1.bic} satisfying 
$\widetilde u(x,t)\in\overline{\DD}$ for a.e.\ $(x,t)\in\dom\times(0,T)$
$\widetilde\Prob$-a.s.\ and $\widetilde u_i\in L^p(\widetilde\Omega;
L^\infty(0,T;L^\infty(\dom)))$ for any $p<\infty$.
\end{theorem}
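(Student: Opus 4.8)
The plan is to build the solution by a four-layer approximation scheme and then pass to the limit via a stochastic compactness argument. The layers are, from innermost to outermost: a stochastic Galerkin truncation of dimension $N$ in space, a Wong--Zakai smoothing of the Wiener process with mesh parameter $\eps$, a regularization $h_\delta(u)=h([u]_\delta)\in C^2(\overline{\DD})$ of the entropy density, and a stopping time $\tau_R$ at which a suitable norm of the solution first reaches $R$. On the fully discrete, smoothed level the equation reduces to a finite-dimensional system of random ordinary differential equations, which I would solve by the deterministic boundedness-by-entropy method of \cite{Jue15}: writing everything in the entropy variables $w=h'(u)$ and using $u=(h')^{-1}(w)\colon\R^n\to\DD$, one obtains Galerkin solutions that automatically take values in the Gibbs simplex $\DD$, which is the source of the almost-sure $L^\infty$ bounds. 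Sending $\eps\to 0$ (Wong--Zakai) turns these into solutions of the Galerkin stochastic differential system in Stratonovich form; the existence of the latter up to $\tau_R$ I would obtain by a fixed-point argument, relying on the Lipschitz continuity of $A$ in (A3) and of $\sigma$ in (A4).

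The crucial step is the derivation of a stochastic entropy estimate uniform in all parameters. Here I would apply the It\^o formula to the functional $u\mapsto\int_\dom h_\delta(u)\,\textnormal{d}x$, which is legitimate because $h_\delta\in C^2(\overline{\DD})$. The deterministic drift reproduces the good dissipation term $\int_\dom\na u:h_\delta''(u)A(u)\na u\,\textnormal{d}x$, which by Assumption (A7) is bounded below by $c_h\sum_i\int_\dom|\na u_i|^2/[u_i]_\delta^{2m}\,\textnormal{d}x$ up to the correction $R_\delta$ that vanishes as $\delta\to 0$. The genuinely stochastic contributions, namely the It\^o correction arising from the Stratonovich-to-It\^o conversion and the quadratic-variation term, are precisely the three expressions controlled by Assumption (A6), so taking expectations and applying Gr\"onwall's lemma yields a bound on $\E\int_0^{\tau_R}\sum_i\int_\dom|\na u_i^N|^2/[u_i^N]_\delta^{2m}\,\textnormal{d}x\,\textnormal{d}s$ independent of $N$, $\delta$, $R$. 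Because $u^N$ already takes values in $\overline{\DD}$, the limits $\delta\to 0$ and $R\to\infty$ (the latter using that $\tau_R\to T$ by the uniform moment bound) then produce global Galerkin solutions carrying uniform entropy and $H^1$ gradient estimates.

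With these bounds in hand I would establish tightness of the laws of $(u^N,W)$ on a suitable path space, combining the uniform $L^2(\widetilde\Omega;L^\infty(0,T;L^\infty(\dom)))$ and $L^2(\widetilde\Omega;L^2(0,T;H^1(\dom)))$ estimates with a fractional-in-time Sobolev estimate read off from the equation, using the tightness criterion of Brze\'{z}niak and coworkers adapted to the weak-$L^2$ and Jakubowski topologies. The Skorokhod--Jakubowski theorem then furnishes a new stochastic basis $\widetilde U$, processes $(\widetilde u^N,\widetilde W^N)$ with the same laws, and an almost-surely convergent limit $(\widetilde u,\widetilde W)$. Passing to the limit in the weak formulation requires strong convergence of $\widetilde u^N$ in $L^2(0,T;L^2(\dom))$ to handle the nonlinear flux $A_{ij}(\widetilde u^N)\na\widetilde u_j^N$ (Lipschitz continuity of $A$ together with the weak gradient limit), while the stochastic integral is identified by the standard martingale argument matching first moments and quadratic variations; the constraint $\widetilde u(x,t)\in\overline{\DD}$ survives because $\overline{\DD}$ is closed and the convergence is almost sure.

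I expect the main obstacle to be the stochastic entropy estimate under the minimal regularity of $h$. Since $h\notin C^2(\overline{\DD})$ the It\^o formula is inapplicable to $h$ itself, forcing the detour through $h_\delta$; the delicate point is that the regularization must preserve the coercivity of the Onsager structure, which is exactly the content of (A7) and its vanishing correction $R_\delta$, while simultaneously the stochastic corrections must be absorbed by the sole available dissipation through the interaction condition (A6). Closing the Gr\"onwall estimate uniformly across all four approximation parameters, rather than merely for fixed ones, is the technical heart of the argument. A secondary difficulty stems from the non-symmetry and indefiniteness of $A$, which rules out monotone-operator and analytic-semigroup compactness methods and makes the entropy-based gradient bound, together with the non-metrizable Skorokhod--Jakubowski machinery, indispensable for both the compactness and the identification of the limit.
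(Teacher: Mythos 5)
Your scheme is the same as the paper's: the identical four approximation layers (Galerkin dimension $N$, Wong--Zakai parameter, entropy regularization $h_\delta$, stopping time $\tau_R$), the same order of limits, the same roles for Assumptions (A5)--(A7) in the It\^o entropy estimate, and the same tightness/Skorokhod--Jakubowski compactness strategy. The minor deviations are legitimate alternatives: the paper verifies the Aldous condition for the Brze\'zniak--Motyl criterion rather than a fractional-in-time Sobolev bound, it does not need Gr\"onwall (Assumption (A6) gives absolute bounds on the stochastic correction terms), and it identifies the limiting stochastic integral by direct convergence (Vitali's theorem plus the a.s.\ convergence $\widetilde W^{(N)}\to\widetilde W$ in $C^0([0,T];\R^n)$, Lemma \ref{lem.E}) instead of your martingale-identification argument via moments and quadratic variations.

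There is, however, one genuine gap, and it concerns exactly the point the paper singles out as its key idea in the final step. You justify $\widetilde u(x,t)\in\overline{\DD}$ by saying that ``$\overline{\DD}$ is closed and the convergence is almost sure.'' That argument only covers the passage from $\widetilde u^{(N)}$ to $\widetilde u$; it tacitly presupposes that $\widetilde u^{(N)}(x,t)\in\overline{\DD}$ a.e.\ $\widetilde\Prob$-a.s., which is not automatic. The simplex bound was proved for $u^{(N)}$ on the \emph{original} probability space (via Wong--Zakai and uniqueness), whereas the Skorokhod--Jakubowski theorem only guarantees that $\widetilde u^{(N)}$ and $u^{(N)}$ share the same law on $\mathcal{B}(Z_T)$. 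An almost-sure pathwise property transfers across equality of laws only if the corresponding set of paths is a Borel subset of $Z_T$, and since $Z_T$ carries a non-metrizable (partly weak) topology, this measurability is not free. The paper resolves it in Lemma \ref{lem.bound} using the Kuratowski theorem: $L^\infty(0,T;L^\infty(\dom))$ is Borel in $Z_T$, the laws then give $\sum_i\|\widetilde u_i^{(N)}\|_{L^\infty(0,T;L^\infty(\dom))}\le 1$ and $\widetilde u_i^{(N)}\in B(0)\cap B(1)$ a.s., whence $\widetilde u^{(N)}(x,t)\in\overline{\DD}$ a.e.\ a.s., and only then does your closedness argument apply. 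A shorter repair is also available: the set of paths taking values in $\overline{\DD}$ a.e.\ is closed in the norm topology of $L^2(0,T;L^2(\dom))$, hence Borel in $Z_T$, so equality of laws transfers the constraint directly. Either way, this measurability step must appear explicitly; as written, your final claim does not follow from what precedes it.
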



\subsection{Key ideas}\label{sec.ideas}

We explain the strategy of the proof of Theorem \ref{thm.ex}. 
The approximation procedure combines the techniques of
\cite{DJZ19} and \cite{Jue15} and is illustrated in Figure \ref{fig}.

\begin{figure}[ht]
\begin{center}
\begin{tikzpicture}[scale=0.85]

\draw (-8.5,-5) circle[radius = 0.5cm] node {\footnotesize{$u$}};

\draw[->] (-8,-4.7) to node [pos=.5, above, sloped]{\footnotesize{Galerkin}} (-6,-3);
\draw[->] (-8,-5.3) to node [pos=.5, above, sloped]{\footnotesize{Galerkin}}(-6.1,-7);
\draw[->] (-8,-5.3) to node [pos=.5, below, sloped]{\footnotesize{Wong--Zakai}}(-6.1,-7);

\draw (-5.5,-3) circle[radius = 0.5cm] node {\footnotesize{$u^{(N)}$}};
\draw[->] (-5.5,-2.5) to node {} (-5.5, -2.1);
\node[align=center] at (-5.5,-1.5) {\footnotesize{SDEs solved up to} \\ 
\footnotesize{stopping time $\tau_R$}};

\draw[dashed,->](-5,-3) to node {} (-3,-4.7);

\draw (-5.5,-7) circle[radius = 0.6cm] node {\footnotesize{$u^{(N, \eta)}$}};
\draw[->] (-5.5,-7.6) to node {} (-5.5, -8.4);
\node[align = center] at (-5.5, -9) {\footnotesize{ODEs with random coefficients} \\
\footnotesize{solved up to $T>0$}};

\draw[->](-4.9,-7) to node [pos=.5, above, sloped]{\footnotesize{$\eta\rightarrow 0$}} 
(-3,-5.3);

\draw (-2.5,-5) circle[radius = 0.5cm] node {\footnotesize{$u^{(N)}$}};
\draw[->] (-2.5,-4.5) to node {}(-2.5, -4);
\node[align = center] at (-2.5, -3.3) {\footnotesize{entropy estimate} \\
\footnotesize{for $h_{\delta}(u)$}};
\draw[->] (-2.5, -5.5) to node {}(-2.5, -7);
\node[align =center] at (-2.5, -7.6) {\footnotesize{solution up to $T\wedge\tau_{R}$} \\
\footnotesize{$u^{N}(\omega,x,t)\in \overline{\DD}$}};

\draw[->] (-2,-5) to node [pos=.5, above]{\footnotesize{$\delta\rightarrow 0$}} (0,-5);

\draw (0.5,-5) circle[radius = 0.5cm] node {\footnotesize{$u^{(N)}$}};
\draw[->] (0.5,-4.5) to node {}(0.5, -3);
\node[align = center] at (0.5, -2.3) {\footnotesize{entropy estimate} \\ 
\footnotesize{for $h(u)$}};

\draw[->] (1,-5) to node [pos=.5, above]{\footnotesize{$R \to \infty$}} (3,-5);

\draw (3.5,-5) circle[radius = 0.5cm] node {\footnotesize{$u^{(N)}$}};
\draw[->] (3.5, -5.5) to node {}(3.5, -7);
\node[align =center] at (3.5, -7.6) {\footnotesize{global solution} \\
\footnotesize{$u^{(N)}(\omega,x,t) \in \overline{\DD}$}};

\draw[->] (4,-5) to node [pos=.5, above]{\footnotesize{$N \to \infty$}} (6,-5);

\draw (6.5,-5) circle[radius = 0.5cm] node {\footnotesize{$\widetilde{u}$}};
\draw[->] (6.5, -4.5) to node {}(6.5, -3);
\node[align =center] at (6.5, -2.3) {\footnotesize{global martingale solution} \\
\footnotesize{$\widetilde{u}(\omega,x,t) \in \overline{\DD}$}};
\end{tikzpicture}
\end{center}
\caption{Steps of the proof of the existence theorem.}
\label{fig}
\end{figure}
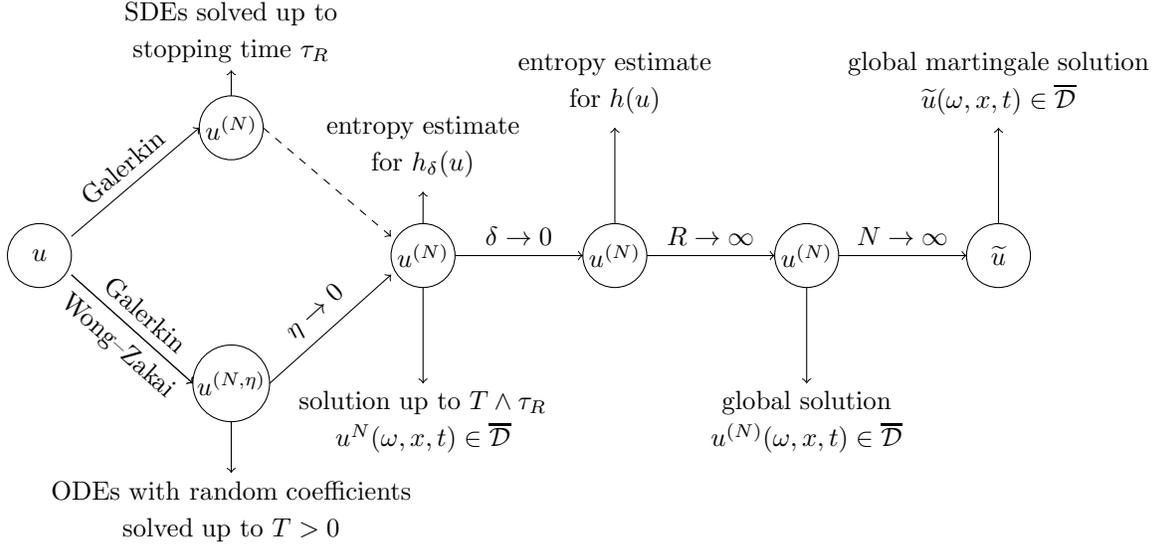

{\em Step 1: Stochastic Galerkin approximation.} Equations \eqref{1.eq} are
projected on a Galerkin space with finite dimension $N\in\N$. The 
existence of a unique solution $u^{(N)}$ to the stochastic differential system 
up to a stopping time $\tau_R$ is shown by Banach's fixed-point theorem, 
exploiting the Lipschitz continuity of the nonlinearities. 
{We recall that $R > 0$ is a previously chosen parameter in the definition 
of {\em the} stopping time $\tau_R$, describing the upper bound of a certain norm.}
Since the contraction constant depends on $R$, 
we cannot pass to the limit $R\to \infty$.
For global solutions, we need a priori estimates which can be derived in principle
from the entropy inequality, similar to \eqref{1.ei}. However, this requires
that the solution is positive and bounded, which cannot be deduced from this
technique. We need the boundedness-by-entropy method.

{\em Step 2: Wong--Zakai approximation.} In order to obtain the uniform boundedness
for the solutions, we regularize the noise in the sense of the Wong--Zakai approximation
with parameter $\eta>0$, giving a system of ordinary differential
equations, which is parametrized by the stochastic variable. 
The existence of a solution $u^{(N,\eta)}$ follows from the 
boundedness-by-entropy method \cite{Jue15}. A consequence of this technique is the
nonnegativity and boundedness of $u^{(N,\eta)}(x,t)$ $\Prob$-a.s. 
We also obtain estimates from an entropy inequality, but they depend on $\eta$
and therefore cannot be further applied. 
The Wong--Zakai theory allows us to pass to the limit $\eta\to 0$, showing that
$u^{(N,\eta)}$ converges to the solution $u^{(N)}$ obtained in Step 1. 
Since this solution is unique, we deduce that
$u^{(N)}$ is nonnegative and bounded, more precisely
$u^{(N)}(x,t)\in\overline{\DD}$ for a.e.\ $(x,t)\in\dom\times(0,T)$ $\Prob$-a.s.

{\em Step 3: Entropy estimates.} Gradient estimates uniform in $N$ are obtained from 
the entropy inequality, which is derived in the deterministic setting
by using the test function $h'(u^{(N)})$. Since the entropy density $h$ generally
does not belong to $C^2(\overline{\DD})$, we cannot use the It\^o lemma. We need to
regularize the entropy density by a function $h_\delta$ (with parameter $\delta>0$)
which belongs to $C^2(\overline{\DD})$. It\^o's lemma then allows us to derive
entropy estimates which are uniform in $\delta$, $R$, and $N$.
After passing to the limit $\delta\to 0$, we infer the following
entropy estimates uniform in the Galerkin dimension $N$:
\begin{equation}\label{1.sei}
  \E\int_\dom h(u^{{(N)}}(t))\,\,\textnormal{d}x + C_1\E\int_0^t\int_\dom\sum_{i=1}^n
	\big|\na (u_i^{(N)})^{1-m}\big|^2 \,\,\textnormal{d}x\,\textnormal{d}s \le C_2,
\end{equation}
where $C_1$, $C_2>0$ are independent of $N$ and $R$ and $m<1$. Since the right-hand side
does not depend on $R$, we may pass to the limit $R\to\infty$, thus obtaining
global approximate solutions $u^{(N)}$.

{\em Step 4: Tightness of the laws.} The tightness of the laws of $(u^{(N)})$
in a sub-Polish space is shown by applying the tightness criterion 
of Brze\'{z}niak and Motyl \cite{BrMo14}. It involves the verification of
some a priori estimates which can be deduced from \eqref{1.sei}.

{\em Step 5: Convergence.} The tightness of the laws of $(u^{(N)})$ and the
Skorokhod--Jakubowski theorem allow us to perform the limit $N\to\infty$
in the sense that there exist random variables $\widetilde u^{(N)}$,
with the same law as $u^{(N)}$, converging to a martingale solution to
\eqref{1.eq}--\eqref{1.bic}. Unfortunately, the property 
$u^{(N)}(x,t)\in\overline{\DD}$ does not directly imply 
that $\widetilde u^{(N)}(x,t)\in\overline{\DD}$ since only the laws of
these random variables coincide. Our idea is to show, using the Kuratowski theorem, 
that $\sum_{i=1}^n\|\widetilde u_i^{(N)}\|_{L^\infty}\le 1$ $\widetilde\Prob$-a.s.\
and that $\widetilde u_i^{(N)}$ lies in the union of the unit balls around zero 
and around one (with respect to the $L^\infty$ norm)
from which we conclude that $\widetilde u(x,t)\in\overline{\DD}$
$\widetilde\Prob$-a.s. 

These steps are detailed in Section \ref{sec.proof}. 
Two examples fulfilling Assumptions (A3)--(A7) are presented in Section \ref{sec.ex}
and some theorems from stochastic analysis are recalled in Appendix \ref{sec.tech}.


\section{Existence analysis}\label{sec.proof}

We prove Theorem \ref{thm.ex} by approximating system \eqref{1.eq} by a stochastic
Galerkin method and later by a Wong--Zakai type approximation of the 
$\mathbb{R}^{n}$-valued Wiener process.

\subsection{Stochastic Galerkin approximation}\label{sec.sga}

We prove the existence of a strong (in the probability sense) solution to
an approximate system up to a stopping time by using the Banach fixed-point
theorem.

The approximate system is obtained from projecting \eqref{1.eq} onto the
finite-dimensional Hilbert space $H_N=\operatorname{span}\{e_1,\ldots,e_N\}$,
where $N\in\N$ and $(e_j)_{j\in\N}$ is an orthonormal basis of $L^2(\dom)$ {such that $H_N \subset H^1(\dom)\cap L^\infty(\dom)$}.
We introduce the projection operator $\Pi_N:L^2(\dom)\to H_N$ by
$$
  \Pi_N(v) = \sum_{i=1}^N \langle v,e_i\rangle e_i\quad\mbox{for }v\in L^2(\dom).
$$
We need the basis in $H^1(\dom)\cap L^\infty(\dom)$ for later purposes, i.e in the proof of Proposition~\ref{prop.Neta}.

The approximate problem is the following system of stochastic differential
equations,
\begin{align}
  \textnormal{d}u_i^{(N)} &= \Pi_N\diver\bigg(\sum_{j=1}^n A_{ij}(u^{(N)})\na u_j^{(N)}
	\bigg) \,\textnormal{d}t	+ \sum_{j=1}^n\Pi_N\big(\sigma_{ij}(u^{(N)})\big)\,\textnormal{d}W_j(t)
	 \nonumber \\
	&\phantom{xx}{}+ \frac12 \Pi_N \bigg(\sum_{k=1}^n\sum_{j=1}^n 
	\sigma_{kj}(u^{(N)})\frac{\pa\sigma_{ij}}{\pa u_k}(u^{(N)})\bigg)\,\textnormal{d}t,
	\quad i=1,\ldots,N, \label{2.approx1}
\end{align}
with the initial conditions
\begin{equation}\label{2.approx2}
	u_i^{(N)}(0) = \Pi_N(u_i^0), \quad i=1,\ldots,N. 
\end{equation}
Since the solutions $u^{(N)}$ may not lie in the Gibbs simplex $\overline{\DD}$,
we need to extend the functions $A_{ij}$ and $\sigma_{ij}$ to the whole space
$\R^n$. This is done in such a way that $A_{ij}$ and $\sigma_{ij}$ are Lipschitz
continuous on $\R^n$ (we do not change the notation). This implies that
$A_{ij}$ and $\sigma_{ij}$ grow at most linearly.

Given $T>0$, we introduce the space $X_T=L^2(\Omega; {C}([0,T];H_N))$ with the norm
$\|u\|_{X_T}^2:=\E(\sup_{0\leq t\leq T}\|u(t)\|_{H_N})^2$. For given $R>0$ and $u\in X_T$,
we define the stopping time
$$
  \tau_R:=\inf\{t\in[0,T]:\|u(t)\|_{H^1(\dom)}>R\}.
$$
Furthermore, we introduce the It\^o correction operator
$\mathcal{T}=(\mathcal{T}_1,\ldots,\mathcal{T}_n):L^2(\dom;\R^n)\to L^2(\dom;\R^n)$ by
$$
  \mathcal{T}_i(u) = \sum_{k=1}^n\sum_{j=1}^n \sigma_{kj}(u)
	\frac{\pa\sigma_{ij}}{\pa u_k}(u), \quad u\in L^2(\dom;\R^n).
$$

\begin{proposition}\label{prop.ex}
Let $T>0$, $R>0$ {be fixed}, and let Assumptions (A1)--(A5) hold.
 Then there exists a unique strong (in the probabilistic sense) solution $u^{(N)}\in
X_{T\wedge\tau_R}$ to \eqref{2.approx1}--\eqref{2.approx2}
such that  for any $t\in[0,T\wedge\tau_R]$,
\begin{align}
  \langle u^{(N)}(t), \phi\rangle 
	&= \langle u^0,\phi\rangle - \int_0^t \langle A(u^{(N)}(s))\na u^{(N)}(s), 
	\na \phi \rangle \,\textnormal{d}s \nonumber \\
  &\phantom{xx}{}+ \frac12 \int_0^t \langle \mathcal{T}(u^{(N)}(s)), \phi \rangle \,\textnormal{d}s 
	+ \int_0^t \langle \sigma(u^{(N)}(s))\,\textnormal{d}W(s),\phi\rangle  \label{2.uNweak}
\end{align}
for any $\phi=(\phi_1,\ldots,\phi_N)\in {W^{1,\infty}(\dom; \R^n)\cap H_N^n}$ {and $X_{T\wedge\tau_R} := L^2(\Omega; C([0,T\wedge\tau_R]; H_N))$.}
\end{proposition}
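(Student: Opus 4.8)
The plan is to recast \eqref{2.approx1}--\eqref{2.approx2} as a fixed-point problem on the Banach space $X_{T}$ and to apply Banach's theorem, using the truncation at level $R$ to turn the only locally Lipschitz nonlinearities into globally Lipschitz ones. Since the drift $\Pi_N\diver(A(u)\na u)$ and the noise are to be controlled only up to $\tau_R=\tau_R(u)$, which depends on $u$ itself, I would first remove this circular dependence by introducing a cutoff $\theta_R:[0,\infty)\to[0,1]$ with $\theta_R(r)=1$ for $r\le R$ and $\theta_R(r)=0$ for $r\ge R+1$, and by multiplying the coefficients of \eqref{2.approx1} by $\theta_R(\|u\|_{H^1(\dom)})$. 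The truncated coefficients are globally Lipschitz on all of $X_T$ (with constants depending on $N$ and $R$), the process stopped at $\tau_R$ solves the original system on $[0,T\wedge\tau_R]$, and uniqueness for the truncated problem transfers to uniqueness of $u^{(N)}$ up to $\tau_R$.

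Concretely, I would define the solution map $\mathcal{K}$ by letting $\mathcal{K}(u)(t)$ be the right-hand side of \eqref{2.uNweak}, read as an equation in $H_N$ via $\Pi_N$ and integration by parts under the no-flux condition:
\begin{align*}
  \mathcal{K}(u)(t) &= \Pi_N u^0 + \int_0^t \Pi_N\diver\big(A(u(s))\na u(s)\big)\,\textnormal{d}s
  + \frac12\int_0^t \Pi_N\mathcal{T}(u(s))\,\textnormal{d}s \\
  &\phantom{xx}{}+ \int_0^t \Pi_N\sigma(u(s))\,\textnormal{d}W(s),
\end{align*}
with the factor $\theta_R(\|u\|_{H^1(\dom)})$ understood in the coefficients. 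A fixed point of $\mathcal{K}$ is precisely the desired solution. The essential tool is that on the finite-dimensional space $H_N$ all norms are equivalent, so there is $C_N>0$ with $\|\phi\|_{H^1(\dom)}+\|\phi\|_{L^\infty(\dom)}\le C_N\|\phi\|_{H_N}$ for $\phi\in H_N$; this both makes $\Pi_N\diver(A(u)\na u)=-\sum_i\langle A(u)\na u,\na e_i\rangle e_i$ well defined for $u\in H_N$ and lets me dominate the spatial gradient in the drift by the $H_N$-norm of $u$.

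Then I would verify the two properties required for Banach's theorem. Well-definedness, $\mathcal{K}:X_T\to X_T$, follows from the at-most-linear growth of $A$, $\sigma$, and $\mathcal{T}$ (the latter bounded using Assumption (A4) together with the Lipschitz continuity of $\sigma$), and from the Burkholder--Davis--Gundy inequality to control $\E\sup_{0\le t\le T}\|\int_0^t\Pi_N\sigma(u)\,\textnormal{d}W\|_{H_N}^2$. For the contraction estimate I would split $A(u)\na u-A(v)\na v=(A(u)-A(v))\na u+A(v)\na(u-v)$, use the Lipschitz continuity of $A$ from (A3), the truncation to bound $\|A(v)\|$ and $\|\na u\|$ by constants depending on $R$, and the norm equivalence on $H_N$; the correction and stochastic terms are handled similarly via (A3)--(A4) and BDG, the latter supplying a factor of the time length. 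This yields $\|\mathcal{K}(u)-\mathcal{K}(v)\|_{X_{T_0}}^2\le C(N,R)\,T_0\,\|u-v\|_{X_{T_0}}^2$, so $\mathcal{K}$ is a contraction on a short interval $[0,T_0]$; concatenating over successive intervals (equivalently, using the weighted norm $\E\sup_t e^{-\beta t}\|u(t)\|_{H_N}^2$ for large $\beta$) extends the unique fixed point to $[0,T]$. Progressive measurability and continuity of $u^{(N)}$ are inherited from the Itô integral and the integral representation, and testing against $e_1,\ldots,e_N$ recovers \eqref{2.uNweak}.

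The main obstacle is the nonlinear diffusion term $\Pi_N\diver(A(u)\na u)$, which is genuinely only locally Lipschitz because it couples the Lipschitz but unbounded matrix $A(u)$ with the gradient $\na u$. Controlling it requires both ingredients at once: the finite Galerkin dimension, which through norm equivalence turns $\na u$ into a bounded linear function of $u$ on $H_N$, and the truncation at $\tau_R$, which keeps $\|u\|_{H^1(\dom)}\le R$ and hence bounds $A(u)$ and $\na u$. Both $N$ and $R$ enter the contraction constant $C(N,R)$, which is exactly why one cannot pass to the limit $R\to\infty$ at this stage and must instead derive the $N$- and $R$-independent entropy estimates in the subsequent steps.
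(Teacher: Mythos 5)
Your proposal is correct, and its core machinery coincides with the paper's: a Banach fixed-point argument on $X_T$, norm equivalence on the finite-dimensional space $H_N$ to tame the gradient in the drift, the Lipschitz/linear-growth hypotheses (A3)--(A4) for the deterministic terms, and the Burkholder--Davis--Gundy inequality for the stochastic integral, with the resulting contraction constant $C(N,R)$ blocking any passage $R\to\infty$ at this stage, exactly as you observe. The genuine difference lies in how the localization at level $R$ is handled. The paper keeps the coefficients untruncated and instead stops all integrals at $T\wedge\tau_R$, redefining for the contraction step a joint stopping time as the minimum of the exit times of $u$ and $v$; it then obtains a contraction on a short interval $[0,T^*]$ with $C(N,R)(T^*)^2<1$, iterates using the fact that $T^*$ does not depend on the initial datum, and invokes a standard result to guarantee $\tau_R>0$ $\Prob$-a.s. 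Your cutoff $\theta_R(\|u\|_{H^1(\dom)})$ instead produces globally Lipschitz coefficients on all of $X_T$, yields a unique global fixed point on $[0,T]$ (via the weighted norm or concatenation), and recovers the stated solution as the stopped process. This buys you a cleaner resolution of a circularity that the paper glosses over: the paper's working space $X_{T\wedge\tau_R}$ and its estimates involve a stopping time defined through the unknown $u$ itself, whereas your truncated system is a bona fide fixed-point problem on a space that does not depend on the solution, with the identification on $[0,\tau_R]$ and the transfer of uniqueness done afterwards as separate, standard steps. What the paper's route buys in exchange is economy: it never introduces the auxiliary truncated system and delivers the proposition directly in the form stated, at the cost of the informal treatment of $\tau_R$ inside the fixed-point argument.
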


\begin{proof}
The idea of the proof is to apply the Banach fixed-point theorem to the mapping
$S:X_{T}\to X_{T}$,
\begin{align}
  \langle S(u^{(N)})(t),\phi\rangle &= \langle u^0,\phi\rangle
	- \int_0^t\langle A(u^{(N)})(s)\na u^{(N)}(s),\na\phi\rangle \,\textnormal{d}s \nonumber \\
	&\phantom{xx}{}+ \frac12\int_0^t\langle\mathcal{T}(u^{(N)}(s)),\phi\rangle \,\textnormal{d}s
	+ \int_0^t\langle\sigma(u^{(N)}(s))\,\textnormal{d}W(s),\phi\rangle , \label{2.S}
\end{align}
where $u^{(N)}\in X_T$ and $\phi\in {W^{1,\infty}(\dom; \R^n)\cap H_N^n}$.
The linear growth of $A$ and $\sigma$ allows us to show that 
$S$ indeed maps $X_{T}$ into itself and that $S$ is a contraction for some
$T^*\in(0,T\wedge\tau_R]$. Although the arguments are rather standard, we provide
a full proof for completeness.

We show first the self-mapping property. Let $u\in X_T$ and $\phi\in 
{W^{1,\infty}(\dom; \R^n)\cap H_N^n}$. Then Definition \eqref{2.S} gives
\begin{align*}
  \|\langle & S(u),\phi\rangle\|^2_{L^2(\Omega;C[0,T\wedge\tau_R])} 
	= \E\bigg(\sup_{0\leq t\leq T\wedge\tau_R}|\langle S(u)(t),\phi\rangle|\bigg)^2 \\
	&\le \|\phi\|_{L^2(\dom)}^2\E\|u^0\|_{L^2(\dom)}^2
	+ C\E\int_0^{T\wedge\tau_R}|\langle A(u(s))\na u(s),\na\phi\rangle|^2 \,\textnormal{d}s \\
	&\phantom{xx}{}
	+ C\E\int_0^{T\wedge\tau_R}|\langle\mathcal{T}(u(s)),\phi\rangle|^2 \,\textnormal{d}s
	+ C\E\bigg(\sup_{0\leq t\leq T\wedge\tau_R}\bigg|\int_0^t\langle\sigma(u(s))\,\textnormal{d}W(s),\phi\rangle
	\bigg|\bigg)^2 \\
	&=: I_1+\cdots+I_4.
\end{align*}
We estimate the terms $I_2$, $I_3$, and $I_4$. Because of the linear growth
of $A$ and the equivalence of the norms in $H_N$, we find that
\begin{align*}
  I_2 &\le C\|\na\phi\|_{L^\infty(\dom)}^2\E\int_0^{T\wedge\tau_R}
	(1+\|u(s)\|_{L^2(\dom)}^2)\|\na u(s)\|_{L^2(\dom)}^2 \,\textnormal{d}s \\
	&\le C(T\wedge\tau_R)\|\na\phi\|_{L^\infty(\dom)}^2
	\E\bigg(1+\sup_{0\leq t\leq T\wedge\tau_R}\|u(t)\|_{L^2(\dom)}^2\bigg)R^2 \\
	&\le C(N,R)T\|\phi\|_{H_N}^2\big(1+\|u\|_{X_{T\wedge\tau_R}}^2\big).
\end{align*}
Assumption (A4) implies that $\mathcal{T}(u)$ grows at most linearly, so
\begin{align*}
  I_3 &\le C\|\phi\|_{L^2(\dom)}^2\E\int_0^{T\wedge\tau_R}
	\|\mathcal{T}(u(s))\|_{L^2(\dom)}^2 \,\textnormal{d}s \\
	&\le C\|\phi\|_{L^2(\dom)}^2\E\int_0^{T\wedge\tau_R}
	\big(1+\|u(s)\|_{L^2(\dom)}^2\big) \,\textnormal{d}s
	\le C(N)T\|\phi\|_{H_N}^2\big(1+\|u\|_{X_{T\wedge\tau_R}}^2\big).
\end{align*}
We obtain from the Burkholder--Davis--Gundy inequality \cite[Prop. 2.12]{Kru14} 
\begin{align*}
  I_4 &\le C\|\phi\|_{L^2(\dom)}^2\E\int_0^{T\wedge\tau_R}
	\|\sigma(u(s))\|_{\mathcal{L}_2(\R^n;L^2(\dom))}^2 \,\textnormal{d}s \\
	&\le C\|\phi\|_{L^2(\dom)}^2\E\int_0^{T\wedge\tau_R}
	\big(1+\|u(s)\|_{L^2(\dom)}^2\big) \,\textnormal{d}s
	\le C(N)T\|\phi\|_{L^2(\dom)}^2\big(1+\|u\|_{X_{T\wedge\tau_R}}^2\big).
\end{align*}
Summarizing these estimates, we find that
$$
  \|S(u)\|_{X_{T\wedge\tau_R}}^2 \le C\E\|u^0\|_{L^2(\dom)}^2 
	+ C(N,R)T\big(1+\|u\|_{X_{T\wedge\tau_R}}^2\big),
$$
which implies that $S$ maps $X_{T\wedge\tau_R}$ to $X_{T\wedge\tau_R}$.

Next, we show that $S:X_T\to X_T$ is a contraction if $0<T<\tau_R$ is sufficiently
small. Let $u$, $v\in X_T$, $\phi\in {W^{1,\infty}(\dom; \R^n)\cap H_N^n}$, 
and $R>0$ and set
$$
  \tau_R = \inf\big\{t\in[0,T]:\|u(t)\|_{H^1(\dom)}>R\big\}\wedge
	\inf\big\{t\in[0,T]:\|v(t)\|_{H^1(\dom)}>R\big\}.
$$
Then
\begin{align*}
  \|\langle & S(u)-S(v),\phi\rangle\|_{L^2(\Omega;C[0,T\wedge\tau_R])}^2 \\
	&\le {C} \E\bigg(\sup_{0\leq t\leq T\wedge\tau_R}\bigg|\int_0^t\langle A(u(s))\na u(s)
	- A(v(s))\na v(s),\na\phi(s)\rangle \,\textnormal{d}s\bigg|\bigg)^2 \\
	&\phantom{xx}{}+ {\frac{C}{2}}\E\bigg(\sup_{0\leq t\leq T\wedge\tau_R}\bigg|\int_0^t\langle
	\mathcal{T}(u(s))-\mathcal{T}(v(s)),\phi\rangle \,\textnormal{d}s\bigg|\bigg)^2 \\
	&\phantom{xx}{}+ {C}\E\bigg(\sup_{0\leq t\leq T\wedge\tau_R}\bigg|\int_0^t\big\langle
	\big(\sigma(u(s))-\sigma(v(s))\big)\,\textnormal{d}W(s),\phi\big\rangle\bigg|\bigg)^2 \\
	&=: I_5+I_6+I_7.
\end{align*}
Assumption (A3) shows that
\begin{align*}
  I_5 &\le C {T}\E\int_0^{T\wedge\tau_R}\big|\big\langle (A(u)-A(v))\na u
	+ A(v)\na(u-v),\na\phi\big\rangle\big|^2 \,\textnormal{d}s \\
	&\le C{T}\|\na\phi\|_{L^\infty(\dom)}^2\E\int_0^{T\wedge\tau_R}
	\Big(\|u(s)-v(s)\|_{L^2(\dom)}^2\|\na u(s)\|_{L^2(\dom)}^2 \\
	&\phantom{xx}{}+ \big(1+\|v(s)\|_{L^2(\dom)}^2\big)
	\|\na(u-v)(s)\|_{L^2(\dom)}^2\Big)\,\textnormal{d}s \\
	&\le C(N)R^2{T^2}\|\na\phi\|_{L^\infty(\dom)}^2\|u-v\|_{X_{T\wedge\tau_R}}^2.
\end{align*}
Similarly, exploiting the linear growth of $\sigma$ and $\mathcal{T}$,
\begin{align*}
  I_6 &\le C{T}\E\int_0^{T\wedge\tau_R}|\langle\mathcal{T}(u)-\mathcal{T}(v),
	\phi\rangle|^2 \,\textnormal{d}s \le C{T^2}\|\phi\|_{L^2(\dom)}^2\|u-v\|_{X_{T\wedge\tau_R}}^2, \\
	I_7 &\le C{T}\|\phi\|_{L^2(\dom)}^2\E\int_0^{T\wedge\tau_R}\|\sigma(u)-\sigma(v)
	\|^2_{\mathcal{L}_2(\R^n;L^2(\dom))} \,\textnormal{d}s
	\le C{T^2}\|\phi\|_{L^2(\dom)}^2\|u-v\|_{X_{T\wedge\tau_R}}^2.
\end{align*}
Consequently,
$$
  \|S(u)-S(v)\|_{X_{T\wedge\tau_R}} \le C(N,R){T^2}\|u-v\|_{X_{T\wedge\tau_R}},
$$
which shows that $S:X_{T^*}\to X_{T^*}$ is a contraction for $0<T^*<T\wedge\tau_R$
satisfying $C(N,R){(T^*)^2}<1$.

By the Banach fixed-point theorem, there exists a unique
fixed point $u^{(N)}\in X_{T^*}$, which means that $u^{(N)}$ solves
\eqref{2.uNweak} for any $t\in(0,T^*)$. 
The local solution can be uniquely extended to a global
one on the whole interval $[0,T\wedge\tau_R]$ since $T^*>0$ is independent of the 
initial datum. Standard results \cite[Lemma 3.23]{KuNe18} show that the
stopping time $\tau_R$ is $\Prob$-a.s.\ positive. 
\end{proof}


\subsection{Wong--Zakai-type approximation}\label{sec.wzga}

We prove the existence of global-in-time solutions to another approximate system
of \eqref{1.eq}, consisting of a system of ordinary differential equations (ODE).
For this, we introduce
two levels of approximations with the following parameters: the Galerkin dimension 
$N\in\N$ and a Wong--Zakai type approximation of the $\mathbb{R}^n$-valued
Wiener process with time step $\eta>0$. More precisely, we project
\eqref{1.eq} as in the previous subsection onto the finite-dimensional Galerkin space
$H_N$ and introduce a uniform partition of the time interval $[0,T]$
with time step $\eta=T/M$, where $M\in\N$ and $t_k=k\eta$ for $k=0,\ldots,M$.
The Wiener process is approximated by the process
\begin{equation}\label{2.wong}
  W^{(\eta)}(t) = W(t_k) + \frac{t-t_k}{\eta}(W(t_{k+1})-W(t_k)), \quad
	t\in[t_k,t_{k+1}],\ k=0,\ldots,M.
\end{equation}

Approximations like this or via convolution with a smooth kernel are generally referred 
to as Wong--Zakai approximations and were introduced in \cite{WoZa65} in one dimension 
and in \cite{StVa72} for systems. Further generalizations can be found in 
\cite{MaZhu19}, \cite{Twa92}--\cite{Twa96}.

The approximate equations read as
\begin{equation}\label{2.approx}
  \frac{\textnormal{d}u^{(N,\eta)}}{\textnormal{d}t} = \Pi_N\diver\bigg(A(u^{(N,\eta)})
	\na u^{(N,\eta)}\bigg) + \Pi_N\big(\sigma(u^{(N,\eta)})\big)
	\frac{\textnormal{d}W^{(\eta)}}{\textnormal{d}t},
\end{equation}
with the initial conditions 
\begin{equation}\label{2.ic}
  u^{(N,\eta)}(0)=\Pi_N(u^0).
\end{equation}
This is a finite-dimensional system of ODEs. 
The existence of global-in-time solutions is deduced from the boundedness-by-entropy
technique of \cite{Jue15}.

\begin{proposition}\label{prop.Neta}
Let $T>0$, $N\in\N$, $\eta>0$, and let Assumptions (A1)--(A5) hold. 
Then for almost all $\omega\in\Omega$, there exists
a global-in-time weak solution $u^{(N,\eta)}=(u_1^{(N,\eta)},\ldots,u_n^{(N,\eta)})$ 
to \eqref{2.approx}--\eqref{2.ic} satisfying
$$
  u_i^{(N,\eta)}(\omega,\cdot,\cdot)\in L^2(0,T;H^1(\dom)),
	\quad \pa_t u_i^{(N,\eta)}(\omega,\cdot,\cdot)\in L^2(0,T;H^1(\dom)')
$$
for $i=1,\ldots,n$ and a.e.\ $\omega\in\Omega$, 
$$
  u^{(N,\eta)}(x,t)\in\overline{\DD}\quad\mbox{for }(x,t)\in\dom\times(0,T)
	\ \Prob\mbox{-a.s.},
$$
$u^{(N,\eta)}(0)=\Pi_N(u^0)$ in the sense of $H^1(\dom)'$, and
\begin{align*}
  \langle u^{(N,\eta)}(t),\phi\rangle 
	&= \langle u^0,\phi\rangle - \int_0^t\langle A(u^{(N,\eta)}(s))\na u^{(N,\eta)}(s),
	\na\phi\rangle \,\textnormal{d}s \\
	&\phantom{xx}{}+ \int_0^t\bigg\langle\sigma(u^{(N,\eta)}(s))
	\frac{\textnormal{d}W^{(\eta)}}{\textnormal{d}t}(s),\phi\bigg\rangle \,\textnormal{d}s
\end{align*}
for any $\phi\in L^2(0,T;H^1(\dom)\cap H_N)^n$.
\end{proposition}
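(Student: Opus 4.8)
The plan is to fix a generic $\omega\in\Omega$ and to regard \eqref{2.approx}--\eqref{2.ic} as a \emph{deterministic} Galerkin cross-diffusion system. Since $W^{(\eta)}$ is piecewise linear, the forcing $\sigma(u)\,\textnormal{d}W^{(\eta)}/\textnormal{d}t$ is piecewise constant in $t$ and, for fixed $\eta$ and $\omega$, uniformly bounded on $[0,T]$; the problem therefore falls into the scope of the boundedness-by-entropy method of \cite{Jue15}, with the noise acting as a bounded lower-order source. The central device is to pass to the entropy variable $w=h'(u)\in H_N^n$ and to set $u=(h')^{-1}(w)$. By Assumption (A5i) the inverse $(h')^{-1}$ maps all of $\R^n$ into the open simplex $\DD$, so that $u(x,t)\in\DD\subset\overline{\DD}$ holds automatically for every finite $w$; this is precisely the mechanism that produces the asserted bound $u^{(N,\eta)}(x,t)\in\overline{\DD}$, with no recourse to a maximum principle. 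Since $(h')^{-1}$ has derivative $(h''(u))^{-1}$, which stays bounded up to $\pa\DD$, the chain rule $\na u=(h''(u))^{-1}\na w$ turns $w\in H_N\subset H^1(\dom)$ into $u\in H^1(\dom)$.

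First I would prove local-in-time existence. Testing the projected equation against $e_1,\dots,e_N$ converts it into a finite system $M(\beta)\dot\beta=F(\beta,t)$ for the coordinates $\beta$ of $w$, where $M(\beta)_{kj}=\langle(h''(u))^{-1}e_j,e_k\rangle$ is symmetric positive definite (hence invertible) for every finite $\beta$ by convexity of $h$, and where $F$ is locally Lipschitz in $\beta$ because $A,\sigma$ are Lipschitz and $u(\beta)$ is $C^1$. The Cauchy--Lipschitz theorem then yields a unique local solution on some $[0,T^\ast)$; its uniqueness is what will give measurability in $\omega$, as $\omega$ enters only through the increments defining $\textnormal{d}W^{(\eta)}/\textnormal{d}t$. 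The initial datum needs a brief separate step: one solves the nonlinear relation $\Pi_N[(h')^{-1}(w(0))]=\Pi_N(u^0)$ for $w(0)\in H_N^n$, which is feasible because $w\mapsto\Pi_N[(h')^{-1}(w)]$ is a projected monotone map.

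Next I would derive the entropy estimate that promotes the local solution to a global one. Using $w=h'(u)\in H_N^n$ as a test function---the classical chain rule applies here, so that no It\^o correction and no regularization $h_\delta$ are required while $u$ stays in the open simplex---one arrives at
\begin{align*}
  \frac{\textnormal{d}}{\textnormal{d}t}\int_\dom h(u)\,\textnormal{d}x
  + \int_\dom\na u:h''(u)A(u)\na u\,\textnormal{d}x
  = \bigg\langle\sigma(u)\frac{\textnormal{d}W^{(\eta)}}{\textnormal{d}t},h'(u)\bigg\rangle.
\end{align*}
Assumption (A5ii) bounds the dissipation below by $c_h\sum_i\|\na u_i^{1-m}\|_{L^2(\dom)}^2$, while Assumption (A6) estimates the right-hand side by $C_h|\dom|\,\|\textnormal{d}W^{(\eta)}/\textnormal{d}t\|_{L^\infty(0,T)}$, which is finite for fixed $\eta,\omega$ even though $h'(u)$ may be singular near $\pa\DD$. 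Integrating in time gives, on the existence interval, a bound on $\sup_t\int_\dom h(u)\,\textnormal{d}x$ together with $u\in L^2(0,T;H^1(\dom))$, and reinserting this into the equation yields $\pa_t u\in L^2(0,T;H^1(\dom)')$.

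I expect the global continuation to be the main obstacle. The difficulty is structural: boundedness forces $h''(u)$ to blow up at $\pa\DD$, so the entropy variable $w$, and with it $\beta$ and the inverse mass matrix $M(\beta)^{-1}$, may degenerate exactly where $u$ approaches the boundary of the simplex, and the entropy bound by itself does not keep $w$ bounded, since $h$ stays finite up to $\pa\DD$. Following the philosophy of \cite{Jue15}, I would circumvent this by tracking the physical variable $u$ rather than $w$: the estimates above confine $u$ to the compact set $\overline{\DD}$ and control it in $L^2(0,T;H^1(\dom))$ with $\pa_t u\in L^2(0,T;H^1(\dom)')$, whence $u\in C([0,T];L^2(\dom))$ extends continuously to $t=T^\ast$ with limit $u(T^\ast)\in\overline{\DD}$; a continuation argument then carries the solution to the full interval $[0,T]$. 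Reconciling the built-in boundedness with this degeneracy of the underlying ODE is, I believe, the delicate part, whereas the self-map, local existence, and entropy computations are routine.
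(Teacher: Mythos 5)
Your overall starting point (fix $\omega$, treat \eqref{2.approx} as a deterministic system with a bounded piecewise-constant forcing, and obtain the $L^\infty$ bound from the parametrization $u=(h')^{-1}(w)\in\DD$) is exactly the paper's, but the route you take to \emph{global} existence has a genuine gap, and it sits precisely at the step you yourself flag as delicate. Your local solution is an ODE for the coordinates $\beta$ of $w\in H_N^n$, and the Cauchy--Lipschitz blow-up alternative says the maximal solution exists until $|\beta(t)|\to\infty$, i.e.\ until $u$ approaches $\pa\DD$ (where $h'$ blows up and $M(\beta)$ degenerates) --- \emph{not} until $u$ blows up. The entropy estimate cannot exclude this: $h$ is bounded on $\overline{\DD}$, so $\sup_t\int_\dom h(u)\,\textnormal{d}x\le C$ gives no control on $\operatorname{dist}(u,\pa\DD)$, and no quantitative positive lower bound on the distance to $\pa\DD$ is available for cross-diffusion systems (the theorem's conclusion is $u\in\overline{\DD}$, not $u\in\DD$ uniformly). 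Your proposed fix --- tracking $u$, which extends continuously to $u(T^\ast)\in\overline{\DD}$, and ``continuing'' from there --- is circular: to restart the ODE at $T^\ast$ you need $w(T^\ast)=h'(u(T^\ast))$ to be a \emph{finite element of} $H_N^n$, and that is exactly what fails when $|\beta(t)|\to\infty$ (either $u(T^\ast)$ touches $\pa\DD$, or at best $h'(u(T^\ast))$ is unbounded and in any case no longer lies in $H_N^n$). So the continuation argument does not close, and it cannot be closed by estimates on $u$ alone, because the dynamics you constructed lives in the $w$-variable.

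This obstruction is the very reason the paper (following \cite{Jue15}) does \emph{not} solve a time-continuous ODE in $w$: it discretizes in time (implicit Euler, step $\tau$) and adds the regularization $\eps\int_\dom w^k\cdot\phi\,\textnormal{d}x$, solving each stationary problem \eqref{2.approxw} by Lax--Milgram plus the Leray--Schauder theorem, where coercivity comes from the $\eps$-term and --- crucially --- the previous step enters only through $u(w^{k-1})\in\overline{\DD}$, never through a bound on $w^{k-1}$ itself. The entropy inequality then gives bounds uniform in $(\eps,\tau)$ (only $\sqrt{\eps}\,\|w^{(\tau)}\|_{L^2}$ is controlled, which is all that is needed), and the solution on all of $[0,T]$ is recovered by the Aubin--Lions compactness result of \cite{DrJu12} for piecewise constant functions, entirely in the $u$-variable; boundedness of $w$ uniformly in $(\eps,\tau)$ is never required. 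Two smaller points in your write-up would also need repair: (i) your initial-datum step (solving $\Pi_N[(h')^{-1}(w(0))]=\Pi_N(u^0)$ for $w(0)\in H_N^n$) is unjustified, since the map is monotone but not coercive (its values are confined to the bounded set $\DD$), so surjectivity is unclear, and $h'(u^0)$ itself may be unbounded because (A2) only gives $u^0(x)\in\DD$ a.e.; the paper's discrete scheme avoids this by inserting $u^0$ directly into the first Euler step. (ii) Your claim that $(h''(u))^{-1}$ stays bounded up to $\pa\DD$ (used for $w\in H^1\Rightarrow u\in H^1$) is not implied by Assumption (A5); only the combination $h''A$ is controlled there.
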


\begin{proof}
In principle, the proof follows by applying the boundedness-by-entropy
method \cite[Theorem 2]{Jue15} to the cross-diffusion system
\eqref{2.approx} with the source term 
\begin{equation*}
  f(u,t):=\Pi_N(\sigma(u^{(N,\eta)}(t)))\frac{\textnormal{d}W^{(\eta)}}{\textnormal{d}t}(t).
\end{equation*}
{We drop the $\omega$ dependence to simplify
the notation.} 
For the convenience of those readers who are not familiar with this technique,
we recall the main steps of the proof. Details can be found in \cite{Jue15,Jue16}.

The idea is to {formulate} \eqref{2.approx} 
as a finite-dimensional diffusion problem 
with variable $w=h'(u^{(N,\eta)})$. After solving this problem in $w$,
we can then define $u^{(N,\eta)}:=(h')^{-1}(w)$, and since the range of $(h')^{-1}$
is the bounded set $\DD$, we find that $u^{(N,\eta)}(\omega,x,t)\in\DD$ for a.e.\ 
$\omega\in\Omega$. The transformation causes two difficulties: First,
the flux transforms to $A(u^{(N,\eta)})\na u^{(N,\eta)}
= B(w)\na w$, but the new diffusion matrix $B(w)=A(u^{(N,\eta)})h''(u^{(N,\eta)})^{-1}$ 
is generally only positive semi-definite. Second,
the time derivative becomes $\pa_t u^{(N,\eta)}
=h''(u^{(N,\eta)})\pa_t w$, but $h''(u^{(N,\eta)})$ may be not invertible on $\pa\DD$.
Both issues can be solved by discretizing \eqref{2.approx} in time and
adding a regularization. In fact, for the fixed $T > 0$, $L \in \N$, we consider a 
time grid $\pi_L$ (which is finer than the uniform time partition considered for the
Wong--Zakai type approximation), and set $\tau=T/L>0$. Let 
$\eps>0$ and $w^{k-1}\in L^\infty(\dom;\R^n)$ be given. We wish to solve {i.e.~find $w^{k}\in H^{1}(\mathcal{O};\mathbb{R}^{n})$, such that}
\begin{align}
  \frac{1}{\tau}\int_\dom\big(u(w^k)-u(w^{k-1})\big)\cdot\phi \,\textnormal{d}x
	&+ \int_\dom\sum_{i,j=1}^n B_{ij}(w^k)\na\phi_i\cdot\na w_j^k \,\textnormal{d}x \nonumber \\
	&{}+ \eps\int_\dom w^k\cdot\phi \,\textnormal{d}x
	= \int_\dom f(u(w^k),t_{k})\cdot\phi \,\textnormal{d}x, \label{2.approxw}
\end{align}
where $u(w):=(h')^{-1}(w)$ and $\phi\in H^1(\dom;\R^n)$. 

{\em Step 1: Solution of the approximate problem.} 
We prove the existence of a solution to \eqref{2.ic} and \eqref{2.approxw}
by applying the Leray--Schauder fixed-point theorem. 
Let the Galerkin space $H_N$ be a subset of $H^1(\dom;\R^n)$ such that
$H_N\subset L^\infty(\dom;\R^n)$. 
(This is possible by choosing appropriate basis functions.)
Let $y\in L^\infty(\dom;\R^n)$ and $\vartheta\in[0,1]$ be given. 
We consider the following linear problem: Find $w=w^k\in H_N$ such that
\begin{equation}\label{2.LM}
  a(w,\phi)=F(\phi)\quad\mbox{for all }\phi\in H_N,
\end{equation}
where
\begin{align*}
  a(w,\phi) &= \int_\dom\sum_{i,j=1}^nB_{ij}(y)\na\phi_i\cdot\na w_j \,\textnormal{d}x
	+ \eps\int_\dom w\cdot\phi \,\textnormal{d}x, \\
  F(\phi) &= -\frac{\vartheta}{\tau}\int_\dom\big(u(y)-u(w^{k-1})\big)\cdot\phi \,\textnormal{d}x
	+ \vartheta\int_\dom f(u(y),t_{k})\cdot\phi \,\textnormal{d}x.
\end{align*}
The boundedness of $y$ and the Cauchy--Schwarz inequality show that $a$ and $F$
are bounded on $H_N$. Since $B(y)$ is positive semi-definite and 
all norms are equivalent in finite dimensions,
$$
  a(w,w) \ge \eps\|w\|_{L^2(\dom)}^2 \ge \eps C(N)\|w\|_{H^1(\dom)}^2,
$$
which means that $a$ is coercive on $H_N$. By the Lax--Milgram lemma,
there exists a unique solution $w\in H_N$ to \eqref{2.LM}
and it holds that $w\in L^\infty(\dom;\R^n)$. This defines the fixed-point
operator $S:H_N\times[0,1]\to H_N$,
$S(y,\vartheta)=w$, where $w$ solves \eqref{2.LM}. 

We verify the assumptions of the Leray--Schauder theorem. The only solution
to \eqref{2.LM} with $\vartheta=0$ is $w=0$; thus $S(y,0)=0$. The continuity of $S$
follows from standard arguments; see the proof of \cite[Lemma 5]{Jue15} for details.
Since $H_N$ is finite-dimensional, $S$ is compact. It remains to prove a uniform
bound for all fixed points of $S(\cdot,\vartheta)$. Let $w\in H_N$ be such a fixed point.
Then $w$ solves \eqref{2.LM} with $y$ replaced by $w$. Choosing the test function
$\phi=w$, we obtain $\Prob$-a.s.
\begin{align}
  \frac{\vartheta}{\tau}\int_\dom\big(u(w)-u(w^{k-1})\big)\cdot w \,\textnormal{d}x
	&+ \int_\dom\sum_{i,j=1}^n B_{ij}(w)\na w_i\cdot\na w_j \,\textnormal{d}x \nonumber \\
	&{}+\eps\int_\dom|w|^2 \,\textnormal{d}x = \vartheta\int_\dom f(u(w),t_{k})\cdot w\,\textnormal{d}x. \label{2.ei}
\end{align}
The convexity of $h$ (see Assumption (A5i)) shows that
$$
  \frac{\vartheta}{\tau}\int_\dom\big(u(w)-u(w^{k-1})\big)\cdot w \,\textnormal{d}x
	\ge \frac{\vartheta}{\tau}\int_\dom\big(h(u(w))-h(u(w^{k-1}))\big)\,\textnormal{d}x.
$$
Since $B(w)$ is positive semi-definite, we have 
$\sum_{i,j=1}^n B_{ij}(w)\na w_i\cdot\na w_j \ge 0$.
Finally, we use Assumption (A4), \eqref{2.wong} along with Kolmogorov's continuity theorem to infer that for all $u\in[0,\infty)^n$, 
\begin{align*}
  f(u(w),t_{k})\cdot h'(u(w)) &= \frac{1}{\eta}\sum_{i,j=1}^n\sigma_{ij}(u(w))
	(W_j(t_{k+1})-W_j(t_k))\frac{\pa h}{\pa u_i}(u(w)) \\
  &\le\frac{1}{\eta}\sum_{j=1}^n|W_j(t_{k+1})-W_j(t_k)|
	\max_{j=1,\ldots,n}\sum_{i=1}^n\bigg|\sigma_{ij}(u(w))\frac{\pa h}{\pa u_i}(u(w))\bigg|
	\le C(\eta).
\end{align*}
This shows that the right-hand side of \eqref{2.ei} is bounded uniformly
in $\vartheta$ and $w$. We infer that 
$\eps\|w\|_{L^2(\dom)}^2 \le C(\eta)$ and consequently 
$\|w\|_{H^1(\dom)}\le C(\eta,\eps,N)$ $\Prob$-a.s.
This yields the desired uniform bound,
and we can apply the Leray--Schauder fixed-point theorem to conclude the
existence of a weak solution $w^k\in H_N$ to \eqref{2.approxw}. 

{\em Step 2: Uniform estimates.}
Since we do not have any uniform estimates for $w$, we switch to the original
variable $u(w^k)$. Let $w^{(\tau)}(\omega,x,t)=w^k(\omega,x)$ and
$u^{(\tau)}(\omega,x,t)=u(w^k(\omega,x))$ for $\omega\in\Omega$, $x\in\dom$,
and $t\in((k-1)\tau,k\tau]$, $k=1,\ldots,L$. At time $t=0$, we set
$w^{(\tau)}(\cdot,0)=h'(u^0)$ and $u^{(\tau)}(\cdot,0)=u^0$. We also need the
shift operator
$(\Gamma_\tau u^{(\tau)})(\omega,x,t)=u(w^{k-1}(\omega,x))$ for 
$\omega\in\Omega$, $x\in\dom$, and $t\in((k-1)\tau,k\tau]$. In this notation,
the weak formulation \eqref{2.approxw} can be written as
\begin{align}
  \frac{1}{\tau}\int_0^T\int_\dom(u^{(\tau)}-\Gamma_\tau u^{(\tau)})\cdot\phi \,\,\textnormal{d}x\,\textnormal{d}t
	&+ \int_0^T\int_\dom\sum_{i,j=1}^n A_{ij}(u^{(\tau)})
	\na\phi_i\cdot\na u_j^{(\tau)} \,\,\textnormal{d}x\,\textnormal{d}t 
	\nonumber \\
	&{}+ \eps\int_0^T\int_\dom w^{(\tau)}\cdot\phi \,\,\textnormal{d}x\,\textnormal{d}t
	= \int_0^T\int_\dom f(u^{(\tau)})\cdot\phi \,\,\textnormal{d}x\,\textnormal{d}t \label{2.weaku}
\end{align}
for piecewise constant functions $\phi:(0,T)\to H_N$. 

We derive now some uniform estimates, {using the test function
$\phi=w^{(\tau)}$ in \eqref{2.weaku}}. 
At this point, we need Assumption (A5ii):
\begin{align*}
  {\sum_{i,j=1}^n A_{ij}(u^{(\tau)})\na w_i^{(\tau)}\cdot\na u_j^{(\tau)}}
  &= \sum_{i,j=1}^n {\big(h''(u^{(\tau)})A(u^{(\tau)})\big)_{ij} 
	\na u_i^{(\tau)}}\cdot\na u_j^{(\tau)} \\
	&\ge c_h\sum_{i=1}^n \frac{|\na u^{(\tau)}_i|^2}{(u^{(\tau)}_i)^{2m}} 
	= \frac{c_h}{(1-m)^2}\sum_{i=1}^n|\na (u^{(\tau)}_i)^{1-m}|^2.
\end{align*}
Hence, summing \eqref{2.weaku} over {$k=1,\ldots,\ell$ with $\ell\le L$}, it follows 
similarly as in Step 1 that $\Prob$-a.s.
$$
  \int_\dom h(u(w^{{\ell}}))\,\textnormal{d}x + \tau\sum_{k=1}^{{\ell}}\sum_{i=1}^n\int_\dom
	|\na u_i(w^k)^{1-m}|^2 \,\textnormal{d}x + \eps\tau\sum_{k=1}^{{\ell}}\|w^k\|_{{L^2(\dom)}}^2 \le C,
$$
where $C>0$ depends on $h(u^0), \eta$ but not on $\eps$ or $\tau$. 
Together with the uniform $L^\infty$ bound for $u^{(\tau)}$, this yields
$$
  \|(u^{(\tau)})^{1-m}\|_{L^2(0,T;H^1(\dom))} 
	+ \sqrt{\eps}\|w^{(\tau)}\|_{L^2(0,T;{L^2(\dom)})} \le C.
$$
Moreover, $\na u^{(\tau)}=(1-m)^{-1}(u^{(\tau)})^m\na (u^{(\tau)})^{1-m}$
is uniformly bounded in $L^2(\dom\times(0,T))$. 
(Here, we need that $0\le m<1$.) A straightforward computation shows
that $\tau^{-1}(u^{(\tau)}-\Gamma_\tau u^{(\tau)})$ is uniformly bounded in
$L^2(0,T;H^1(\dom)')$. 

{\em Step 3: Limit $\eps\to 0$ and $\tau\to 0$.}
The uniform estimates from Step 2 allow us to apply the Aubin--Lions lemma
in the version of \cite{DrJu12}, which provides the existence of a subsequence
of $(u^{(\tau)})$, which is not relabeled, such that, as $(\eps,\tau)\to 0$,
$$
  u^{(\tau)}\to u \quad\mbox{strongly in }L^1(\dom\times(0,T))\ \Prob\mbox{-a.s.}
$$
In view of the uniform $L^\infty$ bound, this convergence holds in any
$L^p(\dom\times(0,T))$ for $p<\infty$ and a.e.\ in $\dom\times(0,T)$ $\Prob$-a.s.
This allows us to identify the nonlinear weak limits. Moreover, by
weak compactness, $\Prob$-a.s.
\begin{align*}
  \na u^{(\tau)} \rightharpoonup \na u &\quad\mbox{weakly in }L^2(0,T;L^2(\dom)), \\
	\tau^{-1}(u^{(\tau)}-\Gamma_\tau u^{(\tau)})
	\rightharpoonup \pa_t u&\quad\mbox{weakly in }L^2(0,T;H^1(\dom)'), \\
	\eps w^{(\tau)} \to 0 &\quad\mbox{strongly in }L^2(0,T;{L^2(\dom)}).
\end{align*}
Performing the limit $(\eps,\tau)\to 0$ in \eqref{2.weaku} shows that 
$u^{(N,\eta)}:=u$ solves \eqref{2.approx} for all test functions
$\phi\in L^2(0,T;H^1(\dom))$ (by density). We verify as in \cite{Jue15} that $u$ satisfies the initial condition \eqref{2.ic}.
\end{proof}

The proof of \cite[Theorem 2]{Jue15} provides some a priori estimates through the
entropy inequality, but they depend on $\eta$ because of the dependence of
the source term on $\eta$. We derive some uniform bounds in Section \ref{sec.unif}.

Next, we show that the Wong--Zakai approximations converge to the strong solution to
\eqref{2.approx1}--\eqref{2.approx2}. The key consequence is the $L^\infty$ bound for
the solution to \eqref{2.approx1}--\eqref{2.approx2}.

\begin{proposition}\label{prop.uN}
Let $u^{(N,\eta)}$ be the solution  to \eqref{2.approx}--\eqref{2.ic},
constructed in Proposition \ref{prop.Neta},
and let $u^{(N)}$ be the unique strong {(in the probabilistic sense)} solution
to \eqref{2.approx1}--\eqref{2.approx2}, proved in Proposition \ref{prop.ex}.
Then $u^{(N,\eta)}\to u^{(N)}$ in probability up to a stopping time
$\tau_R=\inf\{t\in[0,T]:\|u^{(N)}(t)\|_{H^1(\dom)}>R\}$ as $\eta\to 0$ ($M \to \infty$).
Moreover, it holds that
$u^{(N)}(x,t)\in\overline{\DD}$ for a.e. $(x,t)\in\dom\times(0,T)$ $\Prob$-a.s.
\end{proposition}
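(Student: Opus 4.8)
The plan is to establish the convergence $u^{(N,\eta)}\to u^{(N)}$ in probability (up to the stopping time $\tau_R$) by a Wong--Zakai type argument, and then transfer the almost-sure simplex constraint from the $\eta$-level to the limit. First I would note that the strong solution $u^{(N)}$ from Proposition~\ref{prop.ex} satisfies the It\^o formulation \eqref{2.uNweak} on $[0,T\wedge\tau_R]$, whereas $u^{(N,\eta)}$ solves the random ODE \eqref{2.approx} driven by the piecewise-linear interpolant $W^{(\eta)}$ from \eqref{2.wong}. Since both problems live in the \emph{same} finite-dimensional Galerkin space $H_N$ and the coefficients $A_{ij},\sigma_{ij}$ have been extended to be globally Lipschitz (with at most linear growth) on $\R^n$, this is precisely the setting in which classical Wong--Zakai theorems for finite systems of SDEs apply (cf.\ \cite{WoZa65,StVa72,Twa92,Twa96,MaZhu19}). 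The key structural point is that the Stratonovich--It\^o correction term $\tfrac12\mathcal{T}(u)$ already present in \eqref{2.approx1} is exactly the drift correction that the Wong--Zakai limit produces when smooth approximations of $W$ are replaced by $W$; this is why \eqref{2.approx1} was written in It\^o form with the correction $\tfrac12\mathcal{T}$.

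The concrete steps I would carry out are as follows. Fix $R>0$ and work on $[0,T\wedge\tau_R]$, where $\|u^{(N)}(t)\|_{H^1(\dom)}\le R$, so that by equivalence of norms on $H_N$ all relevant quantities are uniformly bounded. I would write down the equation for the difference $v^{(\eta)}:=u^{(N,\eta)}-u^{(N)}$ in the mild or weak form, splitting the right-hand side into (a) the diffusion difference $\Pi_N\diver\!\big(A(u^{(N,\eta)})\na u^{(N,\eta)}-A(u^{(N)})\na u^{(N)}\big)$, (b) the difference of noise contributions $\Pi_N\sigma(u^{(N,\eta)})\,\tfrac{\textnormal{d}W^{(\eta)}}{\textnormal{d}t}$ against $\Pi_N\sigma(u^{(N)})\,\textnormal{d}W$, and (c) the It\^o correction $\tfrac12\Pi_N\mathcal{T}(u^{(N)})$. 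The diffusion terms (a) and the correction (c) are handled by the Lipschitz continuity of $A$ and $\mathcal{T}$ (Assumptions (A3)--(A4)) together with a Gronwall argument, exactly as in the contraction estimates $I_5,I_6$ of Proposition~\ref{prop.ex}. The genuinely stochastic ingredient is the martingale-versus-Riemann-sum comparison in (b): here I would invoke the finite-dimensional Wong--Zakai convergence theorem to control
\begin{equation*}
  \E\bigg(\sup_{t\le T\wedge\tau_R}\bigg\|\int_0^t\Pi_N\sigma(u^{(N,\eta)})\,\textnormal{d}W^{(\eta)}
  -\int_0^t\Pi_N\sigma(u^{(N)})\circ\textnormal{d}W\bigg\|_{H_N}^2\bigg)\longrightarrow 0,
\end{equation*}
which, combined with the Burkholder--Davis--Gundy inequality for the martingale part, yields $\E\sup_{t\le T\wedge\tau_R}\|v^{(\eta)}(t)\|^2_{H_N}\to 0$ and hence convergence in probability.

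The main obstacle I anticipate is the stochastic comparison step (b): one must show that the Stratonovich integral against $W^{(\eta)}$ converges to the It\^o integral against $W$ \emph{plus} the correction $\tfrac12\mathcal{T}$, uniformly up to the stopping time, and this requires carefully quantifying the error from the piecewise-linear interpolation on each subinterval $[t_k,t_{k+1}]$ and summing the contributions. The finiteness of the noise dimension and the Galerkin truncation make every coefficient smooth and bounded on the stopped trajectory, so this reduces to the classical Wong--Zakai estimate, but the bookkeeping of the cross term $\tfrac1\eta\sigma(u^{(N,\eta)})(W(t_{k+1})-W(t_k))$ against the increments $W(t_{k+1})-W(t_k)$, which generates the $\tfrac12\mathcal{T}$ correction in the limit, is delicate. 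Finally, to obtain the simplex property I would argue that $u^{(N,\eta)}(x,t)\in\overline{\DD}$ $\Prob$-a.s.\ by Proposition~\ref{prop.Neta}; since convergence in probability passes to an almost surely convergent subsequence, and $\overline{\DD}$ is closed, the limit $u^{(N)}$ inherits $u^{(N)}(x,t)\in\overline{\DD}$ for a.e.\ $(x,t)$ $\Prob$-a.s. Uniqueness of $u^{(N)}$ from Proposition~\ref{prop.ex} then guarantees this holds for the whole family, independently of the chosen subsequence.
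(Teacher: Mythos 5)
Your proposal is correct and takes essentially the same route as the paper: the paper's proof consists of invoking the finite-dimensional Wong--Zakai convergence theorem (Theorem \ref{thm.wong} in the Appendix), justified by exactly the structural points you identify --- the right-hand side of \eqref{2.approx} is Lipschitz with linear growth on the Galerkin space, and the correction $\tfrac12\mathcal{T}$ in \eqref{2.approx1} is precisely the Stratonovich--It\^o correction --- while the $\overline{\DD}$-bound passes to $u^{(N)}$ through an a.s.\ convergent subsequence as you describe. The explicit difference-equation/Gronwall/BDG bookkeeping you outline is what the cited theorem's proof does internally, so the paper omits it entirely.
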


\begin{proof}
The result is a consequence of Theorem \ref{thm.wong} in the Appendix. 
We can apply this
theorem since the right-hand side of \eqref{2.approx} is Lipschitz continuous
and has linear growth in $u^{(N,\eta)}$ (see the proof of Proposition \ref{prop.ex}).
\end{proof}


\subsection{Uniform estimates}\label{sec.unif}

We prove some estimates uniform in the approximation parameter $N$. 
The starting point is a stochastic version of the entropy inequality.

\begin{lemma}\label{lem.uNglobal}
The solution $u^{(N)}$ to \eqref{2.approx1}--\eqref{2.approx2} is global-in-time
and satisfies the a priori estimate
$$
  \E\int_\dom h(u^{{(N)}}(t))\,\,\textnormal{d}x + C_1\E\int_0^t\int_\dom\sum_{i=1}^n
	\big|\na (u_i^{(N)})^{1-m}\big|^2 \,\,\textnormal{d}x\,\textnormal{d}s \le C_2,
$$
where $C_1$, $C_2>0$ are independent of $N$ and $R$.
\end{lemma}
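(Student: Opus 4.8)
The plan is to derive the a priori estimate by applying It\^o's lemma to the regularized entropy $h_\delta(u) = h([u]_\delta)$, which belongs to $C^2(\overline{\DD})$ and is therefore admissible (unlike $h$ itself, as discussed in Assumption (A7)). Working with the It\^o form of the Galerkin system \eqref{2.approx1}, I would apply It\^o's formula to $t \mapsto \int_\dom h_\delta(u^{(N)}(t))\,\textnormal{d}x$. This produces three kinds of terms: a drift contribution from the diffusion part, the It\^o correction coming from the second-order term in It\^o's lemma together with the Stratonovich-to-It\^o correction operator $\mathcal{T}$, and a martingale (stochastic integral) term. Taking expectations kills the martingale term, so the heart of the matter is controlling the remaining drift and correction terms.

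\textbf{Diffusion term.} The diffusion contribution, after using the entropy variable structure, reads $-\E\int_0^t\int_\dom \na u^{(N)} : h_\delta''(u^{(N)}) A(u^{(N)})\na u^{(N)}\,\textnormal{d}x\,\textnormal{d}s$. Here I would invoke Assumption (A7): since $h_\delta'' = h''([u]_\delta)/(1+\delta)$ up to the chain rule, the compatibility condition gives
\begin{equation*}
  z^\top h''([u]_\delta) A(u) z \ge c_h\sum_{i=1}^n \frac{z_i^2}{[u_i]_\delta^{2m}} + z^\top R_\delta(u) z,
\end{equation*}
so that the diffusion term dominates $c_h \sum_i |\na (u_i^{(N)})^{1-m}|^2 / (1-m)^2$ plus a lower-order $R_\delta$ contribution that vanishes uniformly as $\delta\to 0$. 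This is exactly where the gradient estimate in the statement comes from, and the factor $(1-m)^{-2}$ converts the weighted gradient into $|\na (u_i^{(N)})^{1-m}|^2$ as in the computation already carried out in the proof of Proposition \ref{prop.Neta}.

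\textbf{Noise terms.} The Stratonovich correction and the It\^o second-order term combine into precisely the two expressions appearing in Assumption (A6): the term $\sum_{i,j,k}\sigma_{kj}(\pa\sigma_{ij}/\pa u_k)(\pa h/\pa u_i)$ from the correction operator $\mathcal{T}$ paired with $h'$, and the term $\sum_{i,j,k}\sigma_{ik}(\pa^2 h/\pa u_i\pa u_j)\sigma_{jk}$ from the quadratic variation. Assumption (A6) bounds both by $C_h$, uniformly in $u\in\DD$, hence after integration in space and time these contribute at most $C_h|\dom|\,t \le C_h|\dom|\,T$. Crucially, these bounds are independent of $N$, $R$, $\delta$, and $\eta$; this is the whole point of the interaction assumption (A6), which was designed so that the multiplicative noise compensates the singularities of $h'$ and $h''$.

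\textbf{Passing to the limits and the main obstacle.} Combining the three estimates yields a bound of the form $\E\int_\dom h_\delta(u^{(N)}(t)) + C_1\E\int_0^t\int_\dom\sum_i |\na (u_i^{(N)})^{1-m}|^2 \le \E\int_\dom h_\delta(u^0) + C_h|\dom|T$, uniformly in $\delta$, $R$, $N$. I would then send $\delta\to 0$: the left-hand entropy term converges by continuity of $h$ on $\overline{\DD}$ and dominated convergence (using $\widetilde u^{(N)}\in\overline{\DD}$ from Proposition \ref{prop.uN}, so $h$ is bounded), the $R_\delta$ error vanishes by (A7), and the gradient term is handled by weak lower semicontinuity of the $L^2$ norm. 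Since the resulting constant $C_2 = \E\int_\dom h(u^0) + C_h|\dom|T$ is independent of $R$, I can pass to the limit $R\to\infty$, which extends the solution globally in time and yields the stated inequality. The main obstacle I anticipate is the rigorous justification of It\^o's lemma applied to the composite functional $\int_\dom h_\delta(u^{(N)})\,\textnormal{d}x$ on the finite-dimensional Galerkin space together with the careful bookkeeping that ensures the Stratonovich correction and the It\^o second-order term assemble exactly into the two quantities controlled by (A6); getting the chain-rule factors from $[u]_\delta$ and the $1/(1+\delta)$ scaling to match (A7) cleanly, while keeping every constant independent of $R$ and $N$, is the delicate part.
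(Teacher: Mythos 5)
Your proposal is correct and follows essentially the same route as the paper's proof: It\^o's lemma applied to the regularized entropy $h_\delta(u)=h([u]_\delta)$, expectation annihilating the martingale term, Assumption (A7) (with the $(1+\delta)$ chain-rule factors and vanishing $R_\delta$ correction) for the diffusion term, Assumption (A6) for the Stratonovich correction and quadratic-variation terms, and finally the limits $\delta\to 0$ and $R\to\infty$ in that order. The only cosmetic difference is that the paper passes to the limit in the gradient term via the monotone convergence theorem (the integrand $\frac{1}{(1+\delta)^2}|\na u_i^{(N)}|^2/[u_i^{(N)}]_\delta^{2m}$ increases to $|\na u_i^{(N)}|^2/(u_i^{(N)})^{2m}$ as $\delta\to 0$) rather than by the lower-semicontinuity argument you invoke, but either tool yields the required bound.
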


\begin{proof}
Let $u^{(N)}$ be the solution to \eqref{2.approx1}--\eqref{2.approx2} up
to the stopping time $\tau_R$. Since the entropy density $h$, defined in Assumption
(A5i), may be not a $C^2$ function on $\overline{\DD}$, we cannot apply the It\^o lemma
to this function. Therefore, we need to regularize $h$. Let us recall the notation
from Assumption (A7): Let $\delta>0$ and 
define $[u]_\delta=([u_1]_\delta,\ldots,[u_n]_\delta)$, where
$$
  [u_i]_\delta := \frac{u_i+\delta/n}{1+\delta}, \ i=1,\ldots,n, \quad
	[u_{n+1}]_\delta := \frac{u_{n+1}}{1+\delta},
$$
and $u_{n+1}=1-\sum_{i=1}^n u_i$. 
Then $[u_{n+1}]_\delta=1-\sum_{i=1}^{n}[u_i]_\delta$ and
$[u]_\delta\in\DD$ for any $u\in\overline{\DD}$. It follows that
$h_\delta(u):=h([u]_\delta)$ satisfies $h_\delta\in C^2(\overline{\DD};[0,\infty))$.

We can now apply the It\^o lemma to $h_\delta$. It holds for $t\in[0,T\wedge\tau_R]$
that 
\begin{align*}
  &\int_\dom h_\delta(u^{(N)}(t\wedge\tau_R))\,\textnormal{d}x
	= \int_\dom h_\delta(u^{(N)}(0))\,\textnormal{d}x \nonumber \\
	&\phantom{xx}{}
	- \int_0^{t\wedge\tau_R}\int_\dom \na u^{(N)}(s):h_\delta''(u^{(N)}(s))A(u^{(N)}(s))
	\na u^{(N)}(s)\,\textnormal{d}x\,\textnormal{d}s \nonumber \\
	&\phantom{xx}{}+ \int_0^{t\wedge\tau_R}\int_\dom(\sigma(u^{(N)}(s))\,\textnormal{d}W(s))\cdot 
	h_\delta'(u^{(N)}(s))\,\textnormal{d}x \\
	&\phantom{xx}{}+ \frac12\int_0^{t\wedge\tau_R}\int_\dom h_\delta'(u^{(N)}(s))\cdot
	\mathcal{T}(u^{(N)}(s))\,\textnormal{d}x\,\textnormal{d}s \nonumber \\ 
	&\phantom{xx}{}+ \frac12\int_0^{t\wedge\tau_R}\int_\dom\operatorname{Tr}
	\Big(\sigma(u^{(N)}(s))h''_\delta(u^{(N)}(s))\sigma(u^{(N)}(s))^\ast\Big)\,\textnormal{d}x\,\textnormal{d}s.
\end{align*}
Taking the expectation on both sides and observing that the expectation of the
It\^o integral vanishes,  we find that
\begin{align}
  & \E\int_\dom h_\delta(u^{(N)}(t\wedge\tau_R))\,\textnormal{d}x
	= \int_\dom h_\delta(u^{(N)}(0))\,\textnormal{d}x \nonumber \\
	&\phantom{xx}{}
	- \E\int_0^{t\wedge\tau_R}\int_\dom \na u^{(N)}(s):h_\delta''(u^{(N)}(s))
	A(u^{(N)}(s))\na u^{(N)}(s)\,\textnormal{d}x\,\textnormal{d}s \nonumber \\
	&\phantom{xx}{}+ \frac12\E\int_0^{t\wedge\tau_R}\int_\dom h_\delta'(u^{(N)}(s))\cdot
	\mathcal{T}(u^{(N)}(s))\,\textnormal{d}x\,\textnormal{d}s \nonumber \\
	&\phantom{xx}{}+ \frac12\E\int_0^{t\wedge\tau_R}\int_\dom\operatorname{Tr}
	\Big(\sigma(u^{(N)}(s))h''_\delta(u^{(N)}(s))\sigma(u^{(N)}(s))^\ast\Big)\,\textnormal{d}x\,\textnormal{d}s 
	\nonumber \\
	&=: J_1^{(\delta)}+\cdots+J_4^{(\delta)}. \label{2.J14}
\end{align}

Our aim is to perform the limit $\delta\to 0$ in \eqref{2.J14}. 
We know that the function $h$ is continuous on $\overline{\DD}$ and
that, as $\delta\to 0$, 
$$
  [u^{(N)}(\omega,x,t\wedge\tau_R)]_\delta\to u^{(N)}(\omega,x,t\wedge\tau_R) 
	\quad\mbox{for a.e. }(\omega,x,t)\in\Omega\times\dom\times(0,T).
$$ 
This implies that  
$$
  h_\delta(u^{(N)}(\omega,x,t\wedge\tau_R))
	= h([u^{(N)}(\omega,x,t\wedge\tau_R)]_\delta) 
	\to h(u^{(N)}(\omega,x,t\wedge\tau_R))
$$
for a.e.\ $(\omega,x,t)\in\Omega\times\dom\times(0,T)$.
Moreover, the integral $\E\int_\dom h_\delta(u^{(N)}(t\wedge\tau_R))\,\textnormal{d}x$ is
uniformly bounded in $\delta$ (since $h$ is bounded on $\overline{\DD}$ by assumption).
We conclude from the dominated convergence theorem that, as $\delta\to 0$,
\begin{align*}
  \E\int_\dom h_\delta(u^{(N)}(t\wedge\tau_R))\,\textnormal{d}x 
	&\to \E\int_\dom h(u^{(N)}(t\wedge\tau_R))\,\textnormal{d}x, \\
	J_1^{(\delta)} = \E\int_\dom h_\delta(u^{(N)}(0))\,\textnormal{d}x &\to \E\int_\dom h(\Pi_N(u^0))\,\textnormal{d}x.
\end{align*}

By Assumption (A7), we have
\begin{align}
  J_2^{(\delta)} &= -\frac{1}{(1+\delta)^2}\E\int_0^{t\wedge\tau_R}\int_\dom
	\na u^{(N)}(s):h''([u^{(N)}(s)]_\delta)A(u^{(N)}(s))\na u^{(N)}(s)\,\textnormal{d}x\,\textnormal{d}s \nonumber \\
	&\le -\frac{c_h}{(1+\delta)^2}\E\int_0^{t\wedge\tau_R}\int_\dom\sum_{i=1}^n 
	\frac{|\na u_i^{(N)}(s)|^2}{[u_i^{(N)}(s)]_\delta^{2m}}\,\textnormal{d}x\,\textnormal{d}s \nonumber \\
	&\phantom{xx}{}- \frac{1}{(1+\delta)^2}\E\int_0^{t\wedge\tau_R}\int_\dom
	\na u^{(N)}(s):R_\delta(u^{(N)}(s))\na u^{(N)}(s) \,\textnormal{d}x\,\textnormal{d}s. \label{2.J2}
\end{align}
Since $R_\delta(u^{(N)}(s))\to 0$ as $\delta\to 0$ uniformly in $u^{(N)}(s)$
and $\na u^{(N)}(s)$ is bounded in $L^2(\dom)$, 
the last integral tends to zero as $\delta\to 0$. Because of
$$
  \frac{1}{(1+\delta)^2}\frac{|\na u_i^{(N)}|^2}{[u_i^{(N)}]_\delta^{2m}}
	\nearrow \frac{|\na u_i^{(N)}|^2}{(u_i^{(N)})^{2m}}
	= \frac{|\na (u_i^{(N)})^{1-m}|^2}{(1-m)^2}
$$
as $\delta\to 0$, the monotone convergence theorem implies that
$$
  \E\int_0^{t\wedge\tau_R}\int_\dom\sum_{i=1}^n
	\frac{|\na u_i^{(N)}(s)|^2}{[u_i^{(N)}]_\delta^{2m}}\,\textnormal{d}x\,\textnormal{d}s
	\to \frac{1}{(1-m)^2}\E\int_0^{t\wedge\tau_R}\int_\dom\sum_{i=1}^n
	|\na(u_i^{(N)})^{1-m}|^2 \,\textnormal{d}x\,\textnormal{d}s
$$
and we infer from \eqref{2.J2} that
$$
  \lim_{\delta\to 0}J_2^{(\delta)} \le -\frac{c_h}{(1-m)^2}\E\int_0^{t\wedge\tau_R}
	\int_\dom\sum_{i=1}^n|\na (u_i^{(N)})^{1-m}|^2 \,\textnormal{d}x\,\textnormal{d}s.
$$

The following a.e.\ pointwise limits hold:
\begin{align*}
  h_\delta'(u^{(N)}(s))\cdot\mathcal{T}(u^{(N)}(s))
	&= \frac{1}{1+\delta}\sum_{i,j,k=1}^n\sigma_{kj}(u^{(N)}(s))
	\frac{\pa\sigma_{ij}}{\pa u_k}(u^{(N)}(s))
	\frac{\pa h}{\pa u_i}([u^{(N)}(s)]_\delta) \\
	&\to h'(u^{(N)})\cdot\mathcal{T}(u^{(N)}(s))
\end{align*}
and
\begin{align*}
	\operatorname{Tr}&\Big(\sigma(u^{(N)}(s))h''_\delta(u^{(N)}(s))
	\sigma(u^{(N)}(s))^\ast\Big) \\
	&= \frac{1}{(1+\delta)^2}\sum_{i,j,k=1}^n\sigma_{ik}(u^{(N)}(s))
	\frac{\pa^2 h}{\pa u_i\pa u_j}([u^{(N)}(s)]_\delta)\sigma_{jk}(u^{(N)}(s)) \\
	&\to \operatorname{Tr}\Big(\sigma(u^{(N)}(s))h''(u^{(N)}(s))
	\sigma(u^{(N)}(s))^\ast\Big)
\end{align*}
for a.e.\ $\Omega\times\dom\times[0,T\wedge\tau_R]$.
Then the bounds imposed in Assumption (A6) imply by dominated convergence 
that these expressions converge in $L^1(\Omega\times\dom\times[0,T\wedge\tau_R])$, 
which means that
\begin{align*}
  J_3^{(\delta)} &\to \frac12\E\int_0^{t\wedge\tau_R}\int_\dom h'(u^{(N)}(s))
	\cdot\mathcal{T}(u^{(N)}(s))\,\textnormal{d}x\,\textnormal{d}s, \\
  J_4^{(\delta)} &\to \frac12\E\int_0^{t\wedge\tau_R}\int_\dom\operatorname{Tr}
	\Big(\sigma(u^{(N)}(s))h''(u^{(N)}(s))\sigma(u^{(N)}(s))^\ast\Big)\,\textnormal{d}x\,\textnormal{d}s.
\end{align*}
Using Assumption (A6) again, we see that the limits of $J_3^{(\delta)}$ and
$J_4^{(\delta)}$ are bounded with respect to $N$ and $R$, 
and Assumption (A2) implies that
the limit of $J_1^{(\delta)}$ is uniformly bounded in $N$. Then the limit
$\delta\to 0$ in \eqref{2.J14} yields the entropy inequality
\begin{align*}
  \E\int_\dom & h(u^{{(N)}}(t\wedge\tau_R))\,\textnormal{d}x
	+ C_1\E\int_0^{t\wedge\tau_R}\int_\dom\sum_{i=1}^n
	|\na (u_i^{(N)}(s))^{1-m}|^2 \,\textnormal{d}x\,\textnormal{d}s \\
	&\le \E\int_\dom h(\Pi_N(u^0))\,\textnormal{d}x + C_2,
\end{align*}
where the constants $C_1>0$ and $C_2>0$ are independent of $N$ and $R$. Consequently,
the right-hand side of this inequality does not depend on the chosen sequence
of stopping times $\tau_R$, and we can pass to the limit $R\to\infty$. Hence, the previous inequality
holds for any $t\in[0,T]$.
The uniform $L^\infty$ estimate implies that
\begin{equation}\label{2.estL2}
  \sup_{N\in\N}\E\bigg(\sup_{0<t<T}\|u^{(N)}(t)\|_{L^2(\dom)}^p\bigg)
	\le C(T,u^0)
\end{equation}
for all $1\le p<\infty$, and the entropy inequality shows that
\begin{equation}\label{2.estH1}
  \sup_{N\in\N}\E\|u^{(N)}\|_{L^2(0,T;H^1(\dom))}^2 \le C(T,u^0),
\end{equation}
where $C(T,u^0)>0$ is independent of $N$, since 
\begin{align*}
  \E\|\na u_i^{(N)}\|^2_{L^2(0,T;L^2(\dom))}
	&= \int_\Omega \int_0^T \int_\dom \left|\frac{1}{1-m}(u_i^{(N)})^{m}\na (u_i^{(N)})^{1-m}\right|^2 \rm{d}x\,\rm{d}t\,\rm{d}\Prob(\omega) \\
	& =  \frac{1}{(1-m)^2} \int_\Omega \int_0^T \int_\dom |u_i^{(N)}|^{2m}
	|\na (u_i^{(N)})^{1-m}|^2\rm{d}x\,\rm{d}t\rm{d}\Prob(\omega)\\
	&\le \frac{1}{(1-m)^2} \int_\Omega \int_0^T \int_\dom 	|\na (u_i^{(N)})^{1-m}|^2\rm{d}x\,\rm{d}t\rm{d}\Prob(\omega) \\
	& = \frac{1}{(1-m)^2}\E\|\na(u_i^{(N)})^{1-m}\|^2_{L^2(0,T; L^2(\dom))}\le C.
\end{align*} 
Here, we used $|u^{(N)}(\omega,x,t)| \le 1$ for almost all $(\omega,x,t) \in \Omega\times \dom \times[0,T]$ and that $m<1$. Since $T>0$ was arbitrary, 
the solution $u^{(N)}$ to \eqref{2.approx1}--\eqref{2.approx2} is global-in-time. 
\end{proof}


\subsection{Tightness of the laws of $(u^{(N)})$}\label{sec.tight}

Let $u^{(N)}$ be a solution to \eqref{2.approx1}--\eqref{2.approx2}, constructed
in Lemma \ref{lem.uNglobal}. 
We show that the laws of $u^{(N)}$ are tight in a certain sub-Polish space.
(This is a topological space in which there exists a countable family of 
continuous functions that separate points \cite[Definition 2.1.3]{BFH18}.)
For this, we proceed similarly as in \cite{DJZ19} and introduce the following
spaces:
\begin{itemize}
\item $C^0([0,T];H^3(\dom)')$ is the space of continuous functions
$u:[0,T]\to H^3(\dom)'$ with the topology $\mathbb{T}_1$ induced by the norm
$\|u\|_{C^0([0,T];H^3(\dom)')}=\sup_{t\in(0,T)}\|u(t)\|_{H^3(\dom)'}$;
\item $L^2_w(0,T;H^1(\dom))$ is the space $L^2(0,T;H^1(\dom))$ with the weak
topology $\mathbb{T}_2$;
\item $L^2(0,T;L^2(\dom))$ is the space of square integrable functions 
$u:(0,T)\to L^2(\dom)$ with the topology $\mathbb{T}_3$ induced by the norm
$\|\cdot\|_{L^2(0,T;L^2(\dom))}$;
\item $C^0([0,T];L^2_w(\dom))$ is the space of weakly continuous functions
$u:[0,T]\to L^2(\dom)$ endowed with the weakest topology $\mathbb{T}_4$ such that
for all $ \psi \in L^2(\dom)$, the mappings
$$
  C^0([0,T];L_w^2(\dom))\to C^0([0,T];\R), \quad u\mapsto {\langle u(\cdot),\psi\rangle},
$$
are continuous.
\end{itemize}
We define the space
\begin{equation*}
  Z_T:=C^0([0,T];H^3(\dom)')\cap L_w^2(0,T;H^1(\dom))\cap L^2(0,T;L^2(\dom))
	\cap C^0([0,T];L_w^2(\dom)),
\end{equation*}
endowed with the topology $\mathbb{T}$ that is the maximum of the topologies
$\mathbb{T}_i$, $i=1,2,3,4$, of the corresponding spaces. It is shown in
\cite[Lemma 12]{DJZ19} that $Z_T$ is a sub-Polish space.

\begin{lemma}\label{lem.tight}
The set of laws $(\mathcal{L}(u^{(N)}))_{N\in\N}$ is tight in $Z_T$.
\end{lemma}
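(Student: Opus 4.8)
The plan is to verify the hypotheses of the tightness criterion of Brze\'{z}niak and Motyl \cite{BrMo14}, in the form already adapted to the intersection space $Z_T$ in \cite{DJZ19}. That criterion reduces tightness of $(\mathcal{L}(u^{(N)}))_{N\in\N}$ in $Z_T$ to three ingredients: the uniform moment bound $\sup_{N}\E(\sup_{0<t<T}\|u^{(N)}(t)\|_{L^2(\dom)}^2)<\infty$, the uniform bound $\sup_N\E\|u^{(N)}\|_{L^2(0,T;H^1(\dom))}^2<\infty$, and the Aldous condition in the dual space $H^3(\dom)'$.

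First I would record that the first two ingredients are already available: they are precisely the estimates \eqref{2.estL2} (with $p=2$) and \eqref{2.estH1} established in Lemma~\ref{lem.uNglobal}, whose constants are independent of $N$. The $L^2$ bound controls the laws in $C^0([0,T];L^2_w(\dom))$, while the $L^2(0,T;H^1(\dom))$ bound, together with the compact Rellich embedding $H^1(\dom)\hookrightarrow\hookrightarrow L^2(\dom)$, is what ultimately upgrades the weak tightness in $L^2_w(0,T;H^1(\dom))$ to tightness in the strong topologies of $L^2(0,T;L^2(\dom))$ and $C^0([0,T];H^3(\dom)')$ once the time regularity furnished by the Aldous condition is in hand.

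The core of the argument is therefore the Aldous condition: for every $\varepsilon,\kappa>0$ there should exist $\delta>0$ such that for every sequence $(\tau_N)$ of $\mathbb{F}$-stopping times with $\tau_N\le T$,
\[
  \sup_{N\in\N}\ \sup_{0\le\theta\le\delta}\
	\Prob\big(\|u^{(N)}(\tau_N+\theta)-u^{(N)}(\tau_N)\|_{H^3(\dom)'}\ge\kappa\big)\le\varepsilon.
\]
I would obtain this from a second-moment estimate of the increments. By the weak formulation \eqref{2.uNweak}, for $\phi\in H^3(\dom)$ the increment pairs as
\[
  \langle u^{(N)}(\tau_N+\theta)-u^{(N)}(\tau_N),\phi\rangle
	= -\int_{\tau_N}^{\tau_N+\theta}\langle A(u^{(N)})\na u^{(N)},\na\phi\rangle\,\textnormal{d}s
	+ \tfrac12\int_{\tau_N}^{\tau_N+\theta}\langle\mathcal{T}(u^{(N)}),\phi\rangle\,\textnormal{d}s
	+ \int_{\tau_N}^{\tau_N+\theta}\langle\sigma(u^{(N)})\,\textnormal{d}W,\phi\rangle,
\]
where the first two integrals constitute the drift part and the last is the stochastic part. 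Using the embedding $H^3(\dom)\hookrightarrow W^{1,\infty}(\dom)$ (valid for $d\le3$), the boundedness of $A$ on $\overline{\DD}$ and Assumption~(A4), the Cauchy--Schwarz inequality bounds the $H^3(\dom)'$ norm of the drift part by $C\int_{\tau_N}^{\tau_N+\theta}(1+\|u^{(N)}\|_{H^1(\dom)})\,\textnormal{d}s$, whose square has expectation of order $\theta$ by \eqref{2.estH1}. For the stochastic part, the It\^o isometry for the stopped integral, the contraction property $\|\Pi_N\sigma\|_{\mathcal{L}_2(\R^n;L^2(\dom))}\le\|\sigma\|_{\mathcal{L}_2(\R^n;L^2(\dom))}$, and the linear growth of $\sigma$ give $\E\|\int_{\tau_N}^{\tau_N+\theta}\sigma(u^{(N)})\,\textnormal{d}W\|_{L^2(\dom)}^2\le C\,\E\int_{\tau_N}^{\tau_N+\theta}(1+\|u^{(N)}\|_{L^2(\dom)}^2)\,\textnormal{d}s\le C\theta$ by \eqref{2.estL2}. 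Combining yields $\sup_N\E\|u^{(N)}(\tau_N+\theta)-u^{(N)}(\tau_N)\|_{H^3(\dom)'}^2\le C\theta$ with $C$ independent of $N$, and Chebyshev's inequality then delivers the Aldous condition with $\delta=\varepsilon\kappa^2/C$.

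The main obstacle is this last step, the verification of the Aldous condition, and specifically the uniform-in-$N$ control of the stochastic increment across \emph{arbitrary} stopping times: one must argue that the It\^o-isometry bound survives optional stopping at $\tau_N$ (writing the stopped integral via the adapted indicator $\mathbf{1}_{(\tau_N,\tau_N+\theta]}$), and one must ensure that neither the Galerkin projection $\Pi_N$ nor the linearly extended coefficients introduce $N$-dependent constants. Once the two uniform bounds and the Aldous condition are verified, the Brze\'{z}niak--Motyl criterion applies to each of the topologies $\mathbb{T}_1,\ldots,\mathbb{T}_4$, and since $\mathbb{T}$ is their maximum, tightness of $(\mathcal{L}(u^{(N)}))_{N\in\N}$ in $Z_T$ follows at once.
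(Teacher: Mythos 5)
Your proposal is correct and follows essentially the same route as the paper: the Brze\'{z}niak--Motyl criterion with $U=H^3(\dom)$, $V=H^1(\dom)$, $H=L^2(\dom)$, conditions (a) and (b) supplied by estimates \eqref{2.estL2} and \eqref{2.estH1}, and the Aldous condition in $H^3(\dom)'$ verified by splitting the increment into drift and stochastic parts and applying Chebyshev. The only (harmless) difference is bookkeeping: you bound all increments in second moment to get a uniform $C\theta$ estimate and apply Chebyshev once, whereas the paper estimates the deterministic integrals in first moment (order $\theta^{1/2}$) and the stochastic integral in second moment (order $\theta^{1/3}$) and applies Chebyshev term by term.
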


\begin{proof}
The idea is to apply the tightness criterion of Brze\'{z}niak and Motyl 
\cite[Corollary  2.6]{BrMo14} with the spaces $U=H^3(\dom)$, $V=H^1(\dom)$, and
$H=L^2(\dom)$ (also see the proof of Lemma 11 in \cite{DJZ19}). Estimates
\eqref{2.estL2} and \eqref{2.estH1} are exactly conditions (a) and (b) in
\cite{BrMo14}. It remains to show that $(u^{(N)})_{N\in\N}$ satisfies the Aldous
condition in $H^3(\dom)'$. We need to show that for any $\eps>0$ and $\kappa>0$, 
there exists $\theta_0>0$ such that for any sequence $(\tau_N)_{N\in\N}$
of $\mathbb{F}$-stopping times, it holds that
$$
  \sup_{N\in\N}\sup_{0<\theta<\theta_0}\Prob\big\{\|u^{(N)}(\tau_N+\theta)
	-u^{(N)}(\tau_N)\|_{H^3(\dom)'}\ge\kappa\big\} \le\eps.
$$
We proceed similarly as in \cite[Lemma 11]{DJZ19}. Let $(\tau_N)_{N\in\N}$
be a sequence of $\mathbb{F}$-stopping times such that $0\le\tau_N\le T$
and let $t\in[0,T]$ and $\phi\in H^3(\dom)$. The solution $u^{(N)}$ to
\eqref{2.approx1}--\eqref{2.approx2} solves
\begin{align}
  \langle u_i^{(N)}(t),\phi\rangle
	&= \langle\Pi_N(u_i^0),\phi\rangle
	- \int_0^t\sum_{j=1}^n\big\langle A_{ij}(u^{(N)})\na u_j^{(N)},\na\Pi_N\phi\rangle \,\textnormal{d}s 
	\nonumber \\
	&\phantom{xx}{}+ \frac12\int_0^t\langle \Pi_N(\mathcal{T}_i(u^{(N)}),\phi\rangle \,\textnormal{d}s
	+ \bigg\langle\int_0^t\sum_{j=1}^n\Pi_N(\sigma_{ij}(u^{(N)}))\,\textnormal{d}W_j,\phi\bigg\rangle 
	\nonumber \\
	&=: J_1^{(N)} + J_2^{(N)}(t) + J_3^{(N)}(t) + J_4^{(N)}(t). \label{2.J1234}
\end{align}

Consider first the term involving the diffusion coefficients. Let $\theta>0$.
We use assumption (A3), the continuous embedding $H^3(\dom)\hookrightarrow 
W^{1,\infty}(\dom)$ (for $d\le 3$), and estimates \eqref{2.estL2}--\eqref{2.estH1} 
to find that 
\begin{align*}
  \E\bigg|&\int_{\tau_N}^{\tau_N+\theta}\big\langle A_{ij}(u^{(N)})\na u_j^{(N)},
	\na\Pi_N\phi\rangle \,\textnormal{d}s\bigg| \\
	&\le C\E\int_{\tau_N}^{\tau_N+\theta}\big(1+\|u^{(N)}\|_{L^2(\dom)}\big)
	\|\na u^{(N)}\|_{L^2(\dom)}\|\na\phi\|_{L^\infty(\dom)}\,\textnormal{d}s \\
	&\le C\theta^{1/2}\E\big((1+\|u^{(N)}\|_{L^\infty(0,T;L^2(\dom))})
	\|\na u^{(N)}\|_{L^2(0,T;L^2(\dom))}\big)\|\phi\|_{H^3(\dom)} \\
	&\le C\theta^{1/2}\bigg\{1 + \E\bigg(\sup_{0<t<T}\|u^{(N)}(t)\|_{L^2(\dom)}^2\bigg)
	\bigg\}^{1/2}\bigg\{\E\int_0^T\|\na u^{(N)}\|_{L^2(\dom)}^2 \,\textnormal{d}s\bigg\}^{1/2}
	\|\phi\|_{H^3(\dom)} \\
	&\le C\theta^{1/2}\|\phi\|_{H^3(\dom)},
\end{align*}
where we applied first the Cauchy--Schwarz inequality with respect to time and
then with respect to the random variable. For the It\^o correction term, 
we use the boundedness of $u^{(N)}$ and the Cauchy--Schwarz inequality:
\begin{align*}
  \E\bigg|\int_{\tau_N}^{\tau_N+\theta}\langle\Pi_N(\mathcal{T}_i(u^{(N)}),\phi
	\rangle \,\textnormal{d}s\bigg|
	&\le \E\int_{\tau_N}^{\tau_N+\theta}\|\mathcal{T}_i(u^{(N)})\|_{L^2(\dom)}
	\|\phi\|_{L^2(\dom)} \\
	&\le C\theta^{1/2}\|\phi\|_{H^3(\dom)}.
\end{align*}
For the stochastic term, we take into account Assumption (A4), the It\^o isometry, and
again the Cauchy--Schwarz inequality:
\begin{align*}
  \E\bigg|&\bigg\langle\int_{\tau_N}^{\tau_N+\theta}\Pi_N(\sigma_{ij}(u^{(N)}))
	\,\textnormal{d}W_j,\phi\bigg\rangle \,\textnormal{d}s\bigg|^2 \\
	&\le \E\int_{\tau_N}^{\tau_N+\theta}
	\|\sigma(u^{(N)}(s))\|_{\mathcal{L}_2(\R^n;L^2(\dom))}^2 \,\textnormal{d}s\|\phi\|_{L^2(\dom)}^2 \\
	&\le \E\int_{\tau_N}^{\tau_N+\theta}\big(1+\|u^{(N)}(s)\|_{L^2(\dom)}^2\big)\,\textnormal{d}s
	\|\phi\|_{L^2(\dom)}^2 \\
	&\le C\bigg\{\theta + \theta^{1/3}\bigg(E\int_0^T\|u^{(N)}(s)\|_{L^2(\dom)}^3 \,\textnormal{d}s
	\bigg)^{2/3}\bigg\}\|\phi\|_{L^2(\dom)}^2 \le C\theta^{1/3}\|\phi\|_{H^3(\dom)}^2.
\end{align*}

Note that the previous estimates could be simplified since $u^{(N)}$ is 
uniformly bounded. Our estimates hold under minimal requirements and may be
used for generalizations.

Let $\kappa>0$ and $\eps>0$. In view of the previous estimates and using the
Chebyshev inequality, it follows for $i=2,3$ that
\begin{align*}
  \Prob\big\{&\|J_i^{(N)}(\tau_N+\theta)-J_i^{(N)}(\tau_N)\|_{H^3(\dom)'}
	\ge\kappa\big\}
	\le \frac{1}{\kappa}\E\big\|J_i^{(N)}(\tau_N+\theta)-J_i^{(N)}(\tau_N)
	\big\|_{H^3(\dom)'} \\
	&= \frac{1}{\kappa}\sup_{\|\phi\|_{H^3(\dom)}=1}\E\big|\big\langle
	J_i^{(N)}(\tau_N+\theta)-J_i^{(N)}(\tau_N),\phi\big\rangle\big|
	\le \frac{C\theta^{1/2}}{\kappa},
\end{align*}
while for $i=4$, we have
\begin{align*}
  \Prob\big\{&\|J_4^{(N)}(\tau_N+\theta)-J_4^{(N)}(\tau_N)\|_{H^3(\dom)'}
	\ge\kappa\big\} \\
	&\le \frac{1}{\kappa}\sup_{\|\phi\|_{H^3(\dom)}=1}\E\big|\big\langle
	J_i^{(N)}(\tau_N+\theta)-J_i^{(N)}(\tau_N),\phi\big\rangle\big| \\
  &\le \frac{C}{\kappa}\sup_{\|\phi\|_{H^3(\dom)}=1}\Big(\E\big|\big\langle
	J_i^{(N)}(\tau_N+\theta)-J_i^{(N)}(\tau_N),\phi\big\rangle\big|^2\Big)^{1/2}
	\le \frac{C\theta^{1/6}}{\kappa}.
\end{align*}
Thus, choosing $\theta_0=\min\{1,(\kappa\eps/C)^6\}$, we infer that for $i=2,3,4$,
$$
  \sup_{N\in\N}\sup_{0<\theta<\theta_0}
	\Prob\big\{\|J_i^{(N)}(\tau_N+\theta)-J_i^{(N)}(\tau_N)\|_{H^3(\dom)'}\ge\kappa\big\}
	\le \eps.
$$
This shows that the Aldous condition holds for all three terms $J_i^{(N)}$ $(i=2,3,4$)
and consequently, in view of \eqref{2.J1234}, also for $(u_i^{(N)})_{N\in\N}$.
Thus, by \cite[Corollary 2.6]{BrMo14}, the set of laws of $(u^{(N)})_{N\in\N}$
is tight in $Z_T$.
\end{proof}


\subsection{Convergence of $(u^{(N)})_{N\in\N}$}\label{sec.conv}

Since $Z_T\times C^0([0,T];\R^n)$ satisfies the assumptions of the 
Skorokhod--Jakubowski theorem \cite[Theorem C1]{BrOn10} and the sequence of laws of 
$(u^{(N)})_{N\in\N}$ is tight on $(Z_T,\mathbb{T})$ by Lemma \ref{lem.tight}, 
this theorem implies the existence of a subsequence of $(u^{(N)})_{N\in\N}$,
which is not relabeled, a probability space $(\widetilde\Omega,
\widetilde{\mathcal{F}},\widetilde\Prob)$ and, on this space,
$(Z_T\times C^0([0,T];\R^n))$-valued random variables $(\widetilde u,\widetilde W)$
and $(\widetilde u^{(N)},\widetilde W^{(N)})$ for $N\in\N$ such that
$(\widetilde u^{(N)},\widetilde W^{(N)})$ has the same law as $(u^{(N)},W)$
on $\mathcal{B}(Z_T\times C^0([0,T];\R^n))$ and, as $N\to\infty$,
$$
  (\widetilde u^{(N)},\widetilde W^{(N)})\to (\widetilde u,\widetilde W)
	\quad\mbox{in }Z_T\times C^0([0,T];\R^n)\ \widetilde\Prob\mbox{-a.s.}
$$
Because of the definition of the space $Z_T$, this convergence means 
$\widetilde\Prob$-a.s.,
\begin{align}
  \widetilde u^{(N)}\to \widetilde u &\quad\mbox{in }C^0([0,T];H^3(\dom)'), \nonumber \\
	\widetilde u^{(N)}\rightharpoonup \widetilde u &\quad\mbox{weakly in }
	L^2(0,T;H^1(\dom)), \nonumber \\
	\widetilde u^{(N)}\to \widetilde u &\quad\mbox{in }L^2(0,T;L^2(\dom)), 
	\label{2.conv} \\
	\widetilde u^{(N)}\to \widetilde u &\quad\mbox{in }C^0([0,T];L_w^2(\dom)), \nonumber \\
  \widetilde W^{(N)}\to \widetilde W &\quad\mbox{in }C^0([0,T];\R^n). \nonumber
\end{align}

We wish to derive some regularity properties for the limit $\widetilde u$.
To this end, we proceed as in \cite[Section 2.5]{DJZ19}.
Since $u^{(N)}$ is an element of $C^0([0,T];H_N)$ $\Prob$-a.s., $C^0([0,T];H_N)$
is a Borel set of $C^0([0,T];H^3(\dom)')\cap L^2(0,T;L^2(\dom))$, and
$u^{(N)}$ and $\widetilde u^{(N)}$ have the same law {on $\mathcal{B}(Z_T)$}, we infer that
$$
  \mathcal{L}(\widetilde u^{(N)})\big(C^0([0,T];H_N)\big) = 1
	\quad\mbox{for all }N\in\N.
$$
Observe that $\widetilde u$ is a $Z_T$-Borel random variable since
$\mathcal{B}(Z_T\times C^0([0,T];\R^n))$ is a subset of $\mathcal{B}(Z_T)\times
\mathcal{B}(C^0([0,T];\R^n))$. Furthermore, estimates \eqref{2.estL2}--\eqref{2.estH1}
and the equivalence of the laws of $\widetilde u{(N)}$ and $u{(N)}$ on
$\mathcal{B}(Z_T)$ yield for any $p\ge 1$ the following uniform estimates:
\begin{align*}
  \sup_{N\in\N}\widetilde\E\bigg(\sup_{0<t<T}\|\widetilde u^{(N)}(t)\|_{L^\infty(\dom)}^p
	\bigg) &\le C, \\
	\sup_{N\in\N}\widetilde\E\bigg(\int_0^T\|\widetilde u^{(N)}(t)\|_{H^1(\dom)}^2 \,\textnormal{d}t
	\bigg) &\le C.
\end{align*}
We deduce the existence of a subsequence of $(\widetilde u^{(N)})_{N\in\N}$
(not relabeled) which is weakly* converging in $L^p(\widetilde\Omega;L^\infty(0,T;
L^\infty(\dom)))$ and weakly converging in {$L^2(\widetilde\Omega;L^2(0,T;
H^1(\dom)))$} as $N\to\infty$. Since $\widetilde u^{(N)}\to\widetilde u$
in $Z_T$ $\widetilde\Prob$-a.s., we conclude that 
$\widetilde u\in {L^p(\widetilde\Omega;L^\infty(0,T;L^\infty(\dom)))}$ for any $p\ge 1$
and $\widetilde u\in L^2(\widetilde\Omega;L^2(0,T;H^1(\dom)))$, i.e.
$$
  \widetilde\E\bigg(\sup_{0<t<T}\|\widetilde u(t)\|_{L^\infty(\dom)}^p\bigg) < \infty, 
	\quad \widetilde\E\int_0^T\|\widetilde u(t)\|_{H^1(\dom)}^2 \,\textnormal{d}t < \infty.
$$

We claim that $\widetilde u$ is even bounded in $\overline{\DD}$ $\widetilde\Prob$-a.s.

\begin{lemma}\label{lem.bound}
The limit $\widetilde u$ satisfies $\widetilde u(x,t)\in\overline\DD$ for a.e.\
$(x,t)\in\dom\times(0,T)$ $\widetilde\Prob$-a.s.
\end{lemma}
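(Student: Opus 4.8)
The plan is to realize the simplex constraint as membership in a single fixed closed convex subset of $L^2(0,T;L^2(\dom))^n$, transfer this membership from $u^{(N)}$ to $\widetilde u^{(N)}$ via the equality of laws, and then pass to the limit using the strong convergence in \eqref{2.conv}. Concretely, I would set
\begin{equation*}
  \mathcal{K} := \big\{v\in L^2(0,T;L^2(\dom))^n : v(x,t)\in\overline{\DD}
	\text{ for a.e. }(x,t)\in\dom\times(0,T)\big\}.
\end{equation*}
Since $\overline{\DD}=\{y\in\R^n:y_i\ge 0\ \forall i,\ \sum_{i=1}^n y_i\le 1\}$ is closed and convex in $\R^n$, the set $\mathcal{K}$ is convex and closed in $L^2(0,T;L^2(\dom))^n$: convexity is immediate, and closedness follows because any $L^2$-convergent sequence admits an a.e.-convergent subsequence, so the pointwise constraints $v_i\ge 0$ and $\sum_i v_i\le 1$ survive in the limit. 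In particular $\mathcal{K}$ is a Borel subset of $L^2(0,T;L^2(\dom))^n$.

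First I would transfer the constraint to the new random variables. By Proposition \ref{prop.uN} we have $u^{(N)}\in\mathcal{K}$ $\Prob$-a.s. The identity map $Z_T\to L^2(0,T;L^2(\dom))^n$ is continuous because the topology $\mathbb{T}$ of $Z_T$ is finer than the $L^2$-topology $\mathbb{T}_3$, so $\mathcal{K}\cap Z_T$ is a Borel subset of $Z_T$. Invoking the Kuratowski theorem to certify that this measurability is compatible with the Borel structure carried by the laws, and using that $\widetilde u^{(N)}$ and $u^{(N)}$ have the same law on $\mathcal{B}(Z_T)$, I obtain
\begin{equation*}
  \widetilde\Prob\big(\widetilde u^{(N)}\in\mathcal{K}\big)
	= \mathcal{L}(\widetilde u^{(N)})(\mathcal{K}\cap Z_T)
	= \mathcal{L}(u^{(N)})(\mathcal{K}\cap Z_T)
	= \Prob\big(u^{(N)}\in\mathcal{K}\big) = 1
\end{equation*}
for every $N\in\N$; intersecting these countably many full-measure events shows that $\widetilde u^{(N)}(x,t)\in\overline{\DD}$ for a.e.\ $(x,t)$ and all $N$ simultaneously, $\widetilde\Prob$-a.s.

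Finally I would pass to the limit $N\to\infty$. By \eqref{2.conv} we have $\widetilde u^{(N)}\to\widetilde u$ in $L^2(0,T;L^2(\dom))$ $\widetilde\Prob$-a.s. Since each $\widetilde u^{(N)}$ lies in the closed set $\mathcal{K}$ $\widetilde\Prob$-a.s.\ and $\mathcal{K}$ is closed in $L^2(0,T;L^2(\dom))^n$, the limit satisfies $\widetilde u\in\mathcal{K}$ $\widetilde\Prob$-a.s., which is exactly $\widetilde u(x,t)\in\overline{\DD}$ for a.e.\ $(x,t)\in\dom\times(0,T)$ $\widetilde\Prob$-a.s.

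The main obstacle is the measurability step: the laws of $u^{(N)}$ and $\widetilde u^{(N)}$ agree only on $\mathcal{B}(Z_T)$, whereas the natural description of the simplex constraint lives either in the $L^2$ space or in the non-Polish $L^\infty$ setting. The delicate point is to certify that $\{v:v(x,t)\in\overline{\DD}\text{ a.e.}\}$ is genuinely a $\mathcal{B}(Z_T)$-event, which is precisely where the Kuratowski theorem (together with the continuity of the embedding $Z_T\hookrightarrow L^2(0,T;L^2(\dom))$) is essential. Once this is secured, the convexity and closedness of $\mathcal{K}$ render both the transfer of the constraint and the passage to the limit routine.
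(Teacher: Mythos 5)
Your proof is correct, but it takes a genuinely different route through the key measurability step than the paper does. The paper transfers the constraint in two stages: first the norm bound $\sum_i\|u_i^{(N)}\|_{L^\infty(0,T;L^\infty(\dom))}\le 1$, and then—because a norm bound cannot see the sign of $u_i^{(N)}$—a separate two-ball trick, showing $\widetilde u_i^{(N)}\in B(0)\cap B(1)$ (the closed unit balls in $L^\infty(0,T;L^\infty(\dom))$ centered at $0$ and at $1$) to recover $0\le\widetilde u_i^{(N)}\le 1$; its measurability certificate is that $L^\infty(0,T;L^\infty(\dom))\cap Z_T$ is Borel in $Z_T$, obtained from the Kuratowski theorem together with Lemma B.1 of \cite{BrDh19}. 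You instead encode positivity and the sum constraint simultaneously as membership in the single closed convex set $\mathcal{K}\subset L^2(0,T;L^2(\dom))^n$, and you get $\mathcal{K}\cap Z_T\in\mathcal{B}(Z_T)$ directly: the topology $\mathbb{T}$ of $Z_T$ refines the trace of $\mathbb{T}_3$, so $\mathcal{K}\cap Z_T$ is closed, hence Borel, in $Z_T$. This makes your argument shorter and removes both the two-ball trick and the Kuratowski machinery; in fact, your passing invocation of the Kuratowski theorem (``to certify that this measurability is compatible with the Borel structure'') is superfluous and is the one vague sentence in your write-up—the continuity of the inclusion $(Z_T,\mathbb{T})\hookrightarrow L^2(0,T;L^2(\dom))^n$ already does all the work, and you should either delete that sentence or replace it by the explicit remark that preimages of Borel sets under continuous maps are Borel. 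Both arguments share the same skeleton (transfer the constraint through the equality of laws on $\mathcal{B}(Z_T)$ for each $N$, intersect the countably many full-measure events, then pass to the limit using the $\widetilde\Prob$-a.s.\ strong convergence \eqref{2.conv} and closedness); what the paper's heavier route buys is consistency with the $L^\infty$-framework of \cite{BrDh19,DJZ19} that it cites elsewhere, while yours buys economy and self-containedness.
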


\begin{proof}
By Proposition \ref{prop.uN}, $u^{(N)}(x,t)\in\overline{\DD}$ 
for a.e.\ $(x,t)\in\dom\times(0,T)$ $\Prob$-a.s.
In particular,
\begin{equation}\label{2.le1}
  \|u^{(N)}\|_{L^\infty(0,T;L^\infty(\dom))} 
	:= \sum_{i=1}^n\|u_i^{(N)}\|_{L^\infty(0,T;L^\infty(\dom))} \le 1.
\end{equation}
The set $L^\infty(0,T;L^\infty(\dom))$ is continuously embedded in
$L^\infty(0,T;H^3(\dom)')\cap L^2(0,T;L^2(\dom))$. {Thus, by the
Kuratowski theorem (see Theorem \ref{thm.kura} in the Appendix),
$L^\infty(0,T;L^\infty(\dom))$ is a Borel set of 
$L^\infty(0,T;H^3(\dom)')\cap L^2(0,T;L^2(\dom))$. Therefore, by \cite[Lemma B.1]{BrDh19}, the set $L^\infty(0,T;$ $L^\infty(\dom))\cap Z_T$
is a Borel subset of $L^\infty(0,T;H^3(\dom)')\cap L^2(0,T;L^2(\dom)) \cap Z_T$ which in turn is $Z_T$.} \dela{Thus, by the
Kuratowski theorem (see Theorem \ref{thm.kura} in the Appendix),
$L^\infty(0,T;L^\infty(\dom))$ is a Borel set of 
$L^\infty(0,T;H^3(\dom)')\cap L^2(0,T;L^2(\dom))$ and, in fact, also of
$C^0([0,T];H^3(\dom)')\cap L^2(0,T;L^2(\dom))$ (since the norms are the same). 
By \cite[Lemma B.1]{BrDh19}, the set $L^\infty(0,T;$ $L^\infty(\dom))\cap Z_T$
is a Borel subset of $Z_T$. }
The equivalence of the laws of $\widetilde u^{(N)}$ and $u^{(N)}$ on $\mathcal{B}(Z_T)$ 
as well as \eqref{2.le1} then show that
$$
  \widetilde\Prob\big\{\|\widetilde u^{(N)}\|_{L^\infty(0,T;L^\infty(\dom))}\le 1
	\big\} = \Prob\big\{\|u^{(N)}\|_{L^\infty(0,T;L^\infty(\dom))}\le 1\big\} = 1.
$$
By the definition of the norm in \eqref{2.le1}, this means that
\begin{equation}\label{2.B1}
  \sum_{i=1}^n|\widetilde u_i^{(N)}(x,t)| \le 1 \quad\mbox{for a.e. }
	(x,t)\in\dom\times(0,T)\ \widetilde\Prob\mbox{-a.s.}
\end{equation}

Next, we show that $\widetilde u_i^{(N)}(x,t)\ge 0$ for a.e.\ $(x,t)\in\dom\times(0,T)$
$\widetilde\Prob$-a.s. Let $v\in L^\infty(0,T;$ $L^\infty(\dom))$ 
and define the closed unit ball
$$
  B(v) = \big\{u\in L^\infty(0,T;L^\infty(\dom)):
	\|u-v\|_{L^\infty(0,T;L^\infty(\dom))}\le 1\big\}.
$$
We deduce from \eqref{2.B1} that
$$
  \widetilde\Prob\big(\widetilde u_i^{(N)}\in B(0)\big) = 1, \quad i=1,\ldots,n.
$$
Since $0\le u_i^{(N)}(x,t)\le 1$ a.e.\ in $\dom\times(0,T)$ $\Prob$-a.s., we have 
$\|u_i^{(N)}-1\|_{L^\infty(0,T;L^\infty(\dom))}\le 1$ for all $i=1,\ldots,n$
and consequently, by the equivalence of the laws,
$$
  \widetilde\Prob\big(\widetilde u_i^{(N)}\in B(1)\big)
	= \Prob\big(u_i^{(N)}\in B(1)\big) = 1.
$$
We infer that
$$
  \widetilde\Prob\big(\widetilde u_i^{(N)}\in B(1)\cap B(0)\big) = 1,
$$
and this implies that $0\le \widetilde u_i^{(N)}(x,t)\le 1$ $\widetilde\Prob$-a.s.
and, taking into account \eqref{2.B1}, $\sum_{i=1}^n\widetilde u_i^{(N)}(x,t)$ $\le 1$,
i.e.\ $\widetilde u^{(N)}(x,t)\in\overline\DD$ $\widetilde\Prob$-a.s. Moreover, from \eqref{2.conv} we know that $\widetilde u^{(N)}$ converges to $\widetilde u$ strongly in $L^2(0,T; L^2(\dom))$ $\widetilde \Prob$-a.s. and thus we conclude that $\widetilde u (x,t) \in \overline{\DD}$ for a.e. $(x,t) \in \dom \times(0,T)$ $\widetilde\Prob$-a.s.
\end{proof}

We denote by $\widetilde{\mathbb F}$ and $\widetilde{\mathbb F}^{(N)}$ the filtrations 
generated by $(\widetilde u,\widetilde W)$ and $(\widetilde u^{(N)},\widetilde W^{(N)})$,
respectively. Lemmas 14--15 in \cite{DJZ19} imply that $\widetilde u$ is
progressively measurable with respect to $\widetilde{\mathbb F}$ and that
$\widetilde u^{(N)}$ is progressively measurable with respect to 
$\widetilde{\mathbb F}^{(N)}$. 

The following lemma is needed to prove that $(\widetilde u,\widetilde W)$ is a
martingale solution to \eqref{1.eq}--\eqref{1.bic}.

\begin{lemma}\label{lem.E} 
It holds for all $s$, $t\in[0,T]$ with $s\le t$ and all $\phi_1\in {L^{2}(\dom)}$ and
$\phi_2\in H^3(\dom)$ satisfying $\na\phi_2\cdot\nu=0$ on $\pa\dom$ that
\begin{align}
  \lim_{N\to\infty}\widetilde\E\int_0^T\big\langle\widetilde u_{i}^{(N)}(t)-\widetilde u_{i}(t),
	\phi_1\big\rangle^2\,\textnormal{d}t &= 0, \label{3.E1} \\
	\lim_{N\to\infty}\widetilde\E\big\langle\widetilde u_{i}^{(N)}(0)-\widetilde u_{i}(0),
	\phi_1\big\rangle^2 &= 0, \label{3.E2} \\
	\lim_{N\to\infty}\widetilde\E\int_0^T\bigg|\sum_{j=1}^n\int_0^t\Big\langle
	A_{ij}(\widetilde u^{(N)}(s))\na\widetilde u_j^{(N)}(s)
	- A_{ij}(\widetilde u(s))\na\widetilde u_j(s),\na\phi_2\Big\rangle
	\,\textnormal{d}s\bigg|\,\textnormal{d}t &= 0, \label{3.E3} \\
	\lim_{N\to\infty}\widetilde\E\int_0^T\bigg|\int_0^t\Big\langle
	\mathcal{T}_{i}(\widetilde u^{(N)}(s))- \mathcal{T}_{i}(\widetilde u(s)),
	\phi_1\Big\rangle \,\textnormal{d}s\bigg|\,\textnormal{d}t &= 0, \label{3.E4} \\
	\lim_{N\to\infty}\widetilde\E\int_0^T\bigg|\sum_{j=1}^n\int_0^t\Big\langle
	\sigma_{ij}(\widetilde u^{(N)}(s))\textnormal{d}\widetilde W_j^{(N)}(s)-\sigma_{ij}(\widetilde u(s))
	\textnormal{d}\widetilde W_j(s),\phi_1\Big\rangle\bigg|^2 \,\textnormal{d}t &= 0. \label{3.E5}
\end{align}
\end{lemma}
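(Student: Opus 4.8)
The plan is to upgrade the $\widetilde\Prob$-almost sure convergences collected in \eqref{2.conv} to convergence of the stated expectations. In every case the scheme is the same: first isolate an almost sure (or in-probability) limit from \eqref{2.conv}, then control the integrability of the corresponding random variables uniformly in $N$ by means of the $\overline{\DD}$-bound from Lemma~\ref{lem.bound} (which forces $|\widetilde u^{(N)}|\le 1$ a.e.) together with the uniform bound $\sup_N\widetilde\E\|\widetilde u^{(N)}\|_{L^2(0,T;H^1(\dom))}^2\le C$, and finally pass to the limit by Vitali's convergence theorem. Only the last identity \eqref{3.E5} requires a genuinely stochastic argument.

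For \eqref{3.E1} and \eqref{3.E2} I would argue as follows. By the Cauchy--Schwarz inequality, $\langle\widetilde u_i^{(N)}(t)-\widetilde u_i(t),\phi_1\rangle^2\le\|\phi_1\|_{L^2(\dom)}^2\|\widetilde u_i^{(N)}(t)-\widetilde u_i(t)\|_{L^2(\dom)}^2$, so the time integral in \eqref{3.E1} is bounded by $\|\phi_1\|_{L^2(\dom)}^2\|\widetilde u_i^{(N)}-\widetilde u_i\|_{L^2(0,T;L^2(\dom))}^2$, which tends to zero $\widetilde\Prob$-a.s.\ thanks to the strong $L^2(0,T;L^2(\dom))$ convergence in \eqref{2.conv}; since the integrand is uniformly bounded (both processes lie in $\overline{\DD}$), the dominated convergence theorem yields \eqref{3.E1}. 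For \eqref{3.E2} I use the convergence $\widetilde u^{(N)}\to\widetilde u$ in $C^0([0,T];L^2_w(\dom))$, which is uniform in $t$ and hence holds at $t=0$, giving $\langle\widetilde u_i^{(N)}(0)-\widetilde u_i(0),\phi_1\rangle\to 0$ $\widetilde\Prob$-a.s., and again the bounded convergence theorem closes the argument.

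For the diffusion term \eqref{3.E3} I would split $A_{ij}(\widetilde u^{(N)})\na\widetilde u_j^{(N)}-A_{ij}(\widetilde u)\na\widetilde u_j=(A_{ij}(\widetilde u^{(N)})-A_{ij}(\widetilde u))\na\widetilde u_j^{(N)}+A_{ij}(\widetilde u)\na(\widetilde u_j^{(N)}-\widetilde u_j)$. Using the Lipschitz continuity of $A$ from (A3), the strong $L^2(0,T;L^2(\dom))$ convergence, and the bound $\|\na\phi_2\|_{L^\infty(\dom)}<\infty$ (via $H^3(\dom)\hookrightarrow W^{1,\infty}(\dom)$), the first part is controlled by $\|\na\phi_2\|_{L^\infty(\dom)}\|A(\widetilde u^{(N)})-A(\widetilde u)\|_{L^2(0,T;L^2(\dom))}\|\na\widetilde u^{(N)}\|_{L^2(0,T;L^2(\dom))}\to 0$ a.s.; the second part tends to zero for each fixed $t$ by the weak convergence $\na\widetilde u_j^{(N)}\rightharpoonup\na\widetilde u_j$ in $L^2(0,T;L^2(\dom))$ tested against $\mathbf 1_{[0,t]}A_{ij}(\widetilde u)\na\phi_2$. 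Since $A$ is bounded on $\overline{\DD}$, the whole time integral is dominated by $C(\phi_2,T)(\|\na\widetilde u^{(N)}\|_{L^2(0,T;L^2(\dom))}+\|\na\widetilde u\|_{L^2(0,T;L^2(\dom))})$, which is bounded in $L^2(\widetilde\Omega)$ and hence uniformly integrable, so Vitali's theorem gives \eqref{3.E3}. The It\^o-correction term \eqref{3.E4} is simpler: $\mathcal T_i$ is continuous and, by (A4) together with the boundedness of $\sigma$ on $\overline{\DD}$, bounded on $\overline{\DD}$; hence $\mathcal T_i(\widetilde u^{(N)})\to\mathcal T_i(\widetilde u)$ a.e.\ and boundedly, and the bounded convergence theorem applies directly.

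The main obstacle is the stochastic integral \eqref{3.E5}, where both the integrand $\sigma_{ij}(\widetilde u^{(N)})$ and the integrator $\widetilde W^{(N)}$ depend on $N$ and are only known to converge $\widetilde\Prob$-a.s.\ on the new probability space. My plan is, first, to show that $\sigma_{ij}(\widetilde u^{(N)})\to\sigma_{ij}(\widetilde u)$ strongly in $L^2(\widetilde\Omega;L^2(0,T;\mathcal L_2(\R^n;L^2(\dom))))$: the global Lipschitz continuity of $\sigma$ (a consequence of (A4)) and the strong $L^2(0,T;L^2(\dom))$ convergence give a.s.\ convergence, while the $\overline{\DD}$-bound provides the uniform integrability needed for Vitali. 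Second, recalling that $\widetilde u^{(N)}$ is progressively measurable with respect to $\widetilde{\mathbb F}^{(N)}$ and that $\widetilde W^{(N)}$ is an $\widetilde{\mathbb F}^{(N)}$-Wiener process, I would invoke the convergence lemma for stochastic integrals used in \cite{DJZ19}, which handles exactly the situation of jointly converging integrands and integrators, to conclude that $\int_0^\cdot\sigma_{ij}(\widetilde u^{(N)})\,\textnormal{d}\widetilde W_j^{(N)}\to\int_0^\cdot\sigma_{ij}(\widetilde u)\,\textnormal{d}\widetilde W_j$ in the topology required for \eqref{3.E5}; pairing with $\phi_1$ and a final application of Vitali's theorem then yields \eqref{3.E5}. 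The delicate point is that one cannot simply combine the It\^o isometry with the continuous mapping theorem, since the It\^o integral is not a continuous function of the pair (integrand, integrator) in the relevant topologies, which is precisely why the dedicated stochastic-integral convergence lemma is needed.
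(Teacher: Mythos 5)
Your proposal is correct, and for \eqref{3.E1}--\eqref{3.E4} it is essentially the paper's own argument: extract $\widetilde\Prob$-a.s.\ convergence from \eqref{2.conv}, use the identical splitting $A_{ij}(\widetilde u^{(N)})\na\widetilde u_j^{(N)}-A_{ij}(\widetilde u)\na\widetilde u_j=(A_{ij}(\widetilde u^{(N)})-A_{ij}(\widetilde u))\na\widetilde u_j^{(N)}+A_{ij}(\widetilde u)\na(\widetilde u_j^{(N)}-\widetilde u_j)$ (Lipschitz part plus weak-gradient part, exactly as in the derivation of \eqref{2.convA}), and close with uniform integrability plus Vitali or dominated convergence; you even supply direct proofs of \eqref{3.E1}--\eqref{3.E2}, which the paper simply defers to \cite[Lemma 16]{DJZ19}. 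The genuine difference is in \eqref{3.E5}, and there your treatment is the more careful one. The paper's proof asserts that, since $\widetilde W^{(N)}\to\widetilde W$ in $C^0([0,T];\R^n)$ $\widetilde\Prob$-a.s., ``it is sufficient'' to prove convergence of the integrands, and then carries out the It\^o isometry and dominated convergence entirely with the \emph{fixed} limit integrator $\widetilde W_j$, arriving at \eqref{2.convsig} and its integrated version; the step replacing $\textnormal{d}\widetilde W_j^{(N)}$ by $\textnormal{d}\widetilde W_j$, i.e.\ the control of $\int_0^t\sigma_{ij}(\widetilde u^{(N)})\,\textnormal{d}(\widetilde W_j^{(N)}-\widetilde W_j)$, is left implicit. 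You instead confront the varying integrator directly: strong convergence of $\sigma_{ij}(\widetilde u^{(N)})$ (Lipschitz continuity plus the $\overline{\DD}$-bound plus Vitali), the progressive-measurability and Wiener-process structure with respect to $\widetilde{\mathbb F}^{(N)}$, and then the dedicated convergence lemma for stochastic integrals with jointly converging integrands and integrators that is used in \cite{DJZ19}, followed by a final uniform-integrability step. This costs an appeal to external machinery, but it rigorously closes precisely the step the paper glosses over; your observation that the It\^o integral is not a continuous function of the pair (integrand, integrator), so that a.s.\ convergence of $\widetilde W^{(N)}$ cannot be fed into a continuous-mapping argument, is exactly the right justification for why that lemma (rather than the isometry alone) is needed.
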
 

\begin{proof}
The convergences \eqref{3.E1} and \eqref{3.E2} can be shown as in the proof
of \cite[Lemma 16]{DJZ19}. The convergence \eqref{3.E3} follows from the
Lipschitz continuity of $A_{ij}$ in the bounded domain $\overline\DD$:
\begin{align*}
  \bigg|\int_0^t&\Big\langle
	A_{ij}(\widetilde u^{(N)}(s))\na\widetilde u_j^{(N)}(s)
	- A_{ij}(\widetilde u(s))\na\widetilde u_j(s),\na\phi_2\Big\rangle
	\,\textnormal{d}s\bigg| \\
	&\le \int_0^t\|A_{ij}(\widetilde u^{(N)}(s))-A_{ij}(\widetilde u(s))\|_{L^2(\dom)}
	\|\na \widetilde u_j^{(N)}(s)\|_{L^2(\dom)}\|\na\phi_2\|_{L^\infty(\dom)}\,\textnormal{d}s \\
	&\phantom{xx}{}
	+ \bigg|\int_0^t A_{ij}(\widetilde u(s))\na(\widetilde u^{(N)}(s)-\widetilde u(s))
	\cdot\na\phi_2 \,\textnormal{d}s\bigg|.
\end{align*} 
Since $(\widetilde u^{(N)})$ is bounded in $L^\infty(0,T;L^\infty(\dom))$
$\widetilde\Prob$-a.s.\ and the function $u\mapsto A_{ij}(u)$ is Lipschitz continuous on
bounded sets, the strong $L^2$ convergence of $(\widetilde u^{(N)})$ implies
that $A_{ij}(\widetilde u^{(N)})\to A_{ij}(\widetilde u)$ strongly in
$L^2(0,T;L^2(\dom))$ $\widetilde\Prob$-a.s. Therefore, the first term on the 
right-hand side converges to zero. We deduce from the weak convergence
$\na\widetilde u^{(N)}\to\na\widetilde u$ weakly in $L^2(0,T; L^2(\dom))$ 
$\widetilde\Prob$-a.s.\
that also the second term on the right-hand side converges to zero. This shows that
\begin{equation}\label{2.convA}
  \lim_{N\to\infty}\int_0^t\big\langle A_{ij}(\widetilde u^{(N)}(s))
	\na\widetilde u_j^{(N)}(s),\na\phi_2\big\rangle \,\textnormal{d}s
	= \int_0^t\big\langle A_{ij}(\widetilde u(s))\na\widetilde u_j(s),
	\na\phi_2\big\rangle \,\textnormal{d}s\quad\widetilde\Prob\mbox{-a.s.}
\end{equation}
for all $\phi_2\in H^3(\dom)$ satisfying $\na\phi_2\cdot\nu=0$ on $\pa\dom$. 
We compute 
\begin{align*}
  \widetilde\E\bigg|&\int_0^t\big\langle A_{ij}(\widetilde u^{(N)}(s))
	\na\widetilde u_j^{(N)}(s),\na\phi_2\big\rangle \,\textnormal{d}s\bigg|^{3/2} \\
	&\le \|\na\phi_2\|_{L^\infty(\dom)}^{3/2}\widetilde\E\bigg|\int_0^t
	\big(1+\|\widetilde u^{(N)}(s)\|_{L^2(\dom)}\big)\|\na\widetilde u^{(N)}(s)
	\|_{L^2(\dom)}\,\textnormal{d}s\bigg|^{3/2} \\
	&\le C\|\phi_2\|_{H^3(\dom)}^{3/2}T^{3/4}\widetilde\E\bigg\{
	\big(1+\|\widetilde u^{(N)}\|_{L^\infty(0,T;L^2(\dom))}\big)^{3/2}
	\bigg(\int_0^T\|\na\widetilde u^{(N)}(s)\|_{L^2(\dom)}^2 \,\textnormal{d}s\bigg)^{3/4}\bigg\} \\
	&\le C\|\phi_2\|_{H^3(\dom)}^{3/2}T^{3/4}
	\Big(\widetilde\E\big(1+\|\widetilde u^{(N)}\|_{L^\infty(0,T;L^2(\dom))}^6\big)
	\Big)^{1/4}\Big(\widetilde\E
	\|\widetilde u^{(N)}\|_{L^2(0,T;H^1(\dom))}^2\Big)^{3/4} \le C.
\end{align*}
This bound and the $\widetilde\Prob$-a.s.\ convergence \eqref{2.convA} allow us to
apply the Vitali convergence theorem to infer that \eqref{3.E3} holds.

Analogous arguments lead to the convergence $\mathcal{T}_i(\widetilde u^{(N)})
\to\mathcal{T}_i(\widetilde u)$ strongly in $L^2(0,T;$ $L^2(\dom))$ 
$\widetilde\Prob$-a.s.
(since $\pa\sigma/\pa u_k$ is bounded). Moreover, for $\phi_1\in L^2(\dom)$,
\begin{align*}
  \widetilde\E&\bigg|\int_0^t\big\langle\mathcal{T}_i(\widetilde u^{(N)}(s)),
	\phi_1\big\rangle
	\,\textnormal{d}s\bigg|^2 \le \|\phi_1\|_{L^2(\dom)}^2\widetilde\E\bigg|\int_0^t
	\|\mathcal{T}_i(\widetilde u^{(N)}(s))\|_{L^2(\dom)}\,\textnormal{d}s\bigg|^2 \\
	&\le C\|\phi_1\|_{L^2(\dom)}^2 T\widetilde\E\big(
	1+\|\widetilde u^{(N)}\|_{L^2(0,T;L^2(\dom))}^2\big) \le C,
\end{align*}
and Vitali's convergence theorem implies that \eqref{3.E4} holds.

It remains to prove convergence \eqref{3.E5}. Since $\widetilde W^{(N)}\to
\widetilde W$ in $C^0([0,T];\R^n)$, it is sufficient to show that
$\sigma_{ij}(\widetilde u^{(N)})\to\sigma_{ij}(\widetilde u)$ in 
$L^2(0,T;L^2(\dom))$  $\widetilde\Prob$-a.s. We estimate for
$\phi_1\in L^2(\dom)$, 
\begin{align*}
  \int_0^t&\big|\big\langle\sigma_{ij}(\widetilde u^{(N)}(s))
	-\sigma_{ij}(\widetilde u(s)),\phi_1\big\rangle\big|^2 \,\textnormal{d}s \\
	&\le \int_0^t\big\|\sigma_{ij}(\widetilde u^{(N)}(s))-\sigma_{ij}(\widetilde u(s))
	\big\|_{L^2(\dom)}^2\|\phi_1\|_{L^2(\dom)}^2 \,\textnormal{d}s \\
  &\le C\|\widetilde u^{(N)}(s)-\widetilde u(s)\|_{L^2(0,T;L^2(\dom))}^2
	\|\phi_1\|_{L^2(\dom)}^2.
\end{align*}
Then, by the strong $L^2$ convergence $\widetilde\Prob$-a.s.\ of $(\widetilde u^{(N)})$,
$$
  \lim_{N\to\infty}\int_0^t\big|\big\langle\sigma_{ij}(\widetilde u^{(N)}(s))
	-\sigma_{ij}(\widetilde u(s)),\phi_1\big\rangle\big|^2 \,\textnormal{d}s = 0.
$$
Furthermore,
\begin{align*}
  \widetilde\E&\bigg|\int_0^t\big|\big\langle\sigma_{ij}(\widetilde u^{(N)}(s))
	-\sigma_{ij}(\widetilde u(s)),\phi_1\big\rangle\big|^2 
	\,\textnormal{d}s\bigg|^2 \\
	&\le C\|\phi_1\|_{L^2(\dom)}^4\widetilde\E\int_0^t\big(
	\|\sigma_{ij}(\widetilde u^{(N)}(s))\|_{L^2(\dom)}^4
	+ \|\sigma_{ij}(\widetilde u(s)\|_{L^2(\dom)}^4\big)\,\textnormal{d}s \\
	&\le CT\|\phi_1\|_{L^2(\dom)}^4
	\widetilde E\bigg(\sup_{0<s<T}\|\widetilde u^{(N)}(s)\|_{L^2(\dom)}^4
	+ \sup_{0<s<T}\|\widetilde u(s)\|_{L^2(\dom)}^4\bigg) \le C.
\end{align*}
In view of Vitali's convergence theorem, we deduce from this bound and the 
previous convergence that
$$
  \lim_{N\to\infty}\widetilde\E\int_0^t
	\big|\big\langle\sigma_{ij}(\widetilde u^{(N)}(s))
	- \sigma_{ij}(\widetilde u(s)),\phi_1\big\rangle
	\big|^2 \,\textnormal{d}s = 0.
$$
We deduce from the It\^o isometry that
\begin{equation}\label{2.convsig}
  \lim_{N\to\infty}\widetilde\E\bigg|\bigg\langle\int_0^t\big(
	\sigma_{ij}(\widetilde u^{(N)}(s)-\sigma_{ij}(\widetilde u(s))\big)\textnormal{d}\widetilde W_j(s),
	\phi_1\bigg\rangle\bigg|^2 = 0,
\end{equation}
and we can estimate as
\begin{align*}
  \widetilde\E&\bigg|\bigg\langle\int_0^t\big(\sigma_{ij}(\widetilde u^{(N)}(s))
	- \sigma_{ij}(\widetilde u(s))\big)\textnormal{d}\widetilde W_j(s),\phi_1\bigg\rangle
	\bigg|^2 \\
	&= \widetilde\E\int_0^t\big|\langle\sigma_{ij}(\widetilde u^{(N)}(s))
	- \sigma_{ij}(\widetilde u(s)),\phi_1\rangle\big|^2 \,\textnormal{d}s \\
	&\le \|\phi_1\|_{L^2(\dom)}^2\widetilde\E\int_0^t\|\sigma_{ij}(\widetilde u^{(N)}(s))
	-\sigma_{ij}(\widetilde u(s))\|_{L^2(\dom)}^2 \,\textnormal{d}s \\
	&\le CT\|\phi_1\|_{L^2(\dom)}^2
	\widetilde\E\bigg(\sup_{0<s<T}\|\widetilde u^{(N)}(s)\|_{L^2(\dom)}^2
	+ \sup_{0<s<T}\|\widetilde u(s)\|_{L^2(\dom)}^2\bigg) \le C.
\end{align*}
This bound and convergence \eqref{2.convsig} allow us to apply the dominated
convergence theorem to conclude that for any $\phi_1\in L^2(\dom)$,
$$
  \lim_{N\to\infty}\widetilde\E\int_0^T\bigg|\bigg\langle\int_0^t
	\big(\sigma_{ij}(\widetilde u^{(N)}(s))-\sigma_{ij}(\widetilde u(s))\big)
	\textnormal{d}\widetilde W_j(s),\phi_1\bigg\rangle\bigg|^2 \,\textnormal{d}t = 0.
$$
This shows \eqref{3.E5} and finishes the proof.
\end{proof}

We define
\begin{align*}
  &\Lambda^{(N)}_i\big(\widetilde u^{(N)},\widetilde W^{(N)},\phi\big)(t)
	:= \big\langle\Pi_N(\widetilde u_i(0)),\phi\big\rangle \\
	&\phantom{xxxx}{}
	{-\sum_{j=1}^n\int_0^t\big\langle A_{ij}(\widetilde u^{(N)}(s))
	\na\widetilde u^{(N)}_j(s),\na\phi\big\rangle \,\textnormal{d}s} \\
	&\phantom{xxxx}{}+ \frac12\int_0^t\big\langle
	\Pi_N\mathcal{T}_i(\widetilde u^{(N)}(s)),\phi\big\rangle \,\textnormal{d}s 
	+ \sum_{j=1}^n\bigg\langle\int_0^t\Pi_N\sigma_{ij}(\widetilde u^{(N)}(s))
	\textnormal{d}\widetilde W_j^{(N)}(s),\phi\bigg\rangle, \\
	&\Lambda_i\big(\widetilde u,\widetilde W,\phi\big)(t)
	:= \langle\widetilde u_i(0),\phi\rangle
	{- \sum_{j=1}^n\int_0^t\big\langle A_{ij}(\widetilde u(s))
	\na\widetilde u_j(s),\na\phi\big\rangle \,\textnormal{d}s} \\
	&\phantom{xxxx}{} + \frac12\int_0^t \big\langle\mathcal{T}_i(\widetilde u(s)), 
	\phi \big\rangle \,\textnormal{d}s 
	+ \sum_{j=1}^n\bigg\langle\int_0^t\sigma_{ij}(\widetilde u(s))
	\textnormal{d}\widetilde W_j(s),\phi\bigg\rangle, 
\end{align*}
for $t\in[0,T]$ and $i=1,\ldots,n$. The following corollary is essentially 
a consequence of Lemma \ref{lem.E}; see \cite[Corollary 17]{DJZ19} for a proof.

\begin{corollary}\label{coro.E}
It holds for any $\phi_1\in L^2(\dom)$ and any $\phi_2\in H^3(\dom)$ 
satisfying $\na\phi_2\cdot\nu=0$ on $\pa\dom$ that
\begin{align*}
  \lim_{N\to\infty}\big\|\langle\widetilde u_{i}^{(N)},\phi_1\rangle
	- \langle\widetilde u_{i},\phi_1\rangle\big\|_{L^2(\widetilde\Omega\times(0,T))} &= 0, \\
  \lim_{N\to\infty}\big\|\Lambda_i^{(N)}\big(\widetilde u^{(N)},\widetilde W^{(N)},
	\phi_2\big)	- \Lambda_i\big(\widetilde u,\widetilde W,\phi_2\big)
	\big\|_{L^1(\widetilde\Omega\times(0,T))} &= 0.
\end{align*}
\end{corollary}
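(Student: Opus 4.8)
The plan is to derive both limits from Lemma~\ref{lem.E} by decomposing the relevant differences into the pieces already estimated there and by transferring the Galerkin projection from the coefficients onto the (fixed) test function. The first assertion is immediate: since the $L^2(\widetilde\Omega\times(0,T))$-norm is computed as
$$
  \big\|\langle\widetilde u_i^{(N)},\phi_1\rangle - \langle\widetilde u_i,\phi_1\rangle
  \big\|_{L^2(\widetilde\Omega\times(0,T))}^2
  = \widetilde\E\int_0^T\big\langle\widetilde u_i^{(N)}(t)-\widetilde u_i(t),\phi_1
  \big\rangle^2\,\textnormal{d}t,
$$
the claim is precisely \eqref{3.E1}.

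For the second assertion I would split $\Lambda_i^{(N)}-\Lambda_i$ into its four structural contributions --- initial datum, diffusion, It\^o correction, and stochastic integral --- and treat each in $L^1(\widetilde\Omega\times(0,T))$. Throughout I would use that $\Pi_N$ is self-adjoint and a contraction on $L^2(\dom)$, so that $\langle\Pi_N g,\phi_2\rangle=\langle g,\Pi_N\phi_2\rangle$ with $\|\Pi_N\phi_2\|_{L^2(\dom)}\le\|\phi_2\|_{L^2(\dom)}$ and $\Pi_N\phi_2\to\phi_2$ in $L^2(\dom)$. The diffusion term carries no projection and is exactly the integrand appearing in \eqref{3.E3}, so it converges to zero directly. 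The initial-datum difference equals $\langle\widetilde u_i(0),(\Pi_N-I)\phi_2\rangle$, which tends to zero after taking expectation and integrating in time, using the $L^2(\widetilde\Omega)$-bound on $\widetilde u_i(0)$ from Assumption (A2) and $\Pi_N\phi_2\to\phi_2$. For the It\^o-correction term I would write
$$
  \big\langle\Pi_N\mathcal{T}_i(\widetilde u^{(N)})-\mathcal{T}_i(\widetilde u),\phi_2
  \big\rangle = \big\langle\mathcal{T}_i(\widetilde u^{(N)})-\mathcal{T}_i(\widetilde u),
  \Pi_N\phi_2\big\rangle + \big\langle\mathcal{T}_i(\widetilde u),
  \Pi_N\phi_2-\phi_2\big\rangle,
$$
where the first summand is controlled uniformly in $N$ by the strong convergence $\mathcal{T}_i(\widetilde u^{(N)})\to\mathcal{T}_i(\widetilde u)$ in $L^2(0,T;L^2(\dom))$ obtained in the proof of Lemma~\ref{lem.E} (this is the $L^1$-form of \eqref{3.E4}), and the second vanishes by $\Pi_N\phi_2\to\phi_2$ together with the uniform $L^2$-bound on $\mathcal{T}_i(\widetilde u)$.

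The stochastic term is handled by \eqref{3.E5}. After moving the projection onto the test function, \eqref{3.E5} gives convergence to zero of the full stochastic increment --- carrying both the approximating noise $\widetilde W^{(N)}$ and the limit noise $\widetilde W$ --- in $L^2(\widetilde\Omega\times(0,T))$, which on this finite measure space yields the required $L^1$-convergence; the residual projection error $\langle\int_0^t\sigma_{ij}(\widetilde u^{(N)})\,\textnormal{d}\widetilde W^{(N)}_j,(\Pi_N-I)\phi_2\rangle$ is estimated via the It\^o isometry using the linear growth of $\sigma$ (Assumption (A4)) and the uniform moment bound \eqref{2.estL2}. Assembling the four contributions proves the corollary. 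The main point to watch is the bookkeeping of the projections $\Pi_N$, which are present in $\Lambda_i^{(N)}$ but absent from \eqref{3.E3}--\eqref{3.E5}: one must check that every residual term of the form $\langle\,\cdot\,,(\Pi_N-I)\phi_2\rangle$ is uniformly integrable in $(\omega,t)$ and tends to zero, which follows from the linear growth of $A$, $\mathcal{T}$, $\sigma$ combined with \eqref{2.estL2}--\eqref{2.estH1}. The analogous computation is carried out in \cite[Corollary 17]{DJZ19}.
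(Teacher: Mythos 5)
Your proposal is correct and follows exactly the route the paper indicates: the paper gives no self-contained proof but states that the corollary is ``essentially a consequence of Lemma~\ref{lem.E}'' and cites \cite[Corollary 17]{DJZ19}, and your argument carries out precisely that derivation, with the right bookkeeping of the projections $\Pi_N$ (moved onto the test function, controlled by $\|\Pi_N\phi_2\|_{L^2(\dom)}\le\|\phi_2\|_{L^2(\dom)}$, $\Pi_N\phi_2\to\phi_2$, and the uniform bounds \eqref{2.estL2}--\eqref{2.estH1}) and with Vitali/dominated convergence supplying the $L^1(\widetilde\Omega\times(0,T))$ limits.
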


With these preparations, we can finish the proof of Theorem \ref{thm.ex}.
Indeed, since $u^{(N)}$ is a strong solution to \eqref{2.approx1}--\eqref{2.approx2},
it satisfies the identity
$$
  \langle u^{(N)}_i(t),\phi\rangle = \Lambda_i^{(N)}(u^{(N)},W,\phi)(t)
	\quad\Prob\mbox{-a.s.}
$$
for a.e.\ $t\in[0,T]$, $i=1,\ldots,n$, and $\phi\in H^1(\dom)$. In particular, it
follows that
$$
  \int_0^T\E\big|\langle u_i^{(N)}(t),\phi\rangle - \Lambda_i^{(N)}(u^{(N)},W,\phi)(t)
	\big|\,\textnormal{d}t = 0, \quad i=1,\ldots,n.
$$
Moreover, since the laws $\mathcal{L}(u^{(N)},W)$ and $\mathcal{L}(\widetilde u^{(N)},
\widetilde W^{(N)})$ coincide,
$$
  \int_0^T\widetilde\E\big|\langle\widetilde u_i^{(N)}(t),\phi\rangle
	- \Lambda_i^{(N)}(\widetilde u^{(N)},\widetilde W^{(N)},\phi)(t)\big|\,\textnormal{d}t = 0,
	\quad i=1,\ldots,n.
$$
We deduce from Corollary \ref{coro.E} that in the limit $N\to\infty$, this
equation becomes
$$
  \int_0^T\widetilde\E\big|\langle\widetilde u_i(t),\phi\rangle
	- \Lambda_i(\widetilde u,\widetilde W,\phi)(t)\big|\,\textnormal{d}t = 0, \quad i=1,\ldots,n.
$$
This identity holds for all $\phi\in H^3(\dom)$ satisfying $\na\phi\cdot\nu=0$
on $\pa\dom$ and, by density, also for all $\phi\in H^1(\dom)$. Hence,
for a.e.\ $t\in[0,T]$ and $\widetilde\Prob$-a.s.,
$$
  \big|\langle\widetilde u_i(t),\phi\rangle
	- \Lambda_i(\widetilde u,\widetilde W,\phi)(t)\big| = 0, \quad i=1,\ldots,n.
$$
The definition of $\Lambda_i$ implies that for a.e.\ $t\in[0,T]$
$\widetilde\Prob$-a.s.\ and for all $\phi\in H^1(\dom)$,
\begin{align*}
\langle \widetilde u_i(t),\phi\rangle &= \langle\widetilde u_i(0),\phi\rangle    { - \sum_{j=1}^n\int_0^t\big\langle A_{ij}(\widetilde u(s))
	\na\widetilde u_j(s),\na\phi\big\rangle \,\textnormal{d}s} \\
	&\phantom{xx}{}+ \frac12\int_0^t\langle\mathcal{T}_i(\widetilde u(s)),\phi
	\rangle \,\textnormal{d}s + \sum_{j=1}^n\bigg\langle\int_0^t\sigma_{ij}(\widetilde u(s))
	\textnormal{d}\widetilde W_j(s),\phi\bigg\rangle.
\end{align*}
Setting $\widetilde U=(\widetilde\Omega,\widetilde{\mathcal{F}},\widetilde{\mathbb{F}},
\widetilde\Prob)$, we deduce that $(\widetilde U,\widetilde u,\widetilde W)$ is a
martingale solution to \eqref{1.eq}--\eqref{1.bic}, and the stochastic process
$\widetilde u$ satisfies the estimates 
$$
  \widetilde\E\int_0^T\|\widetilde u(t)\|_{H^1(\dom)}^2 \,\textnormal{d}t<\infty,\quad
	\widetilde\E\bigg(\sup_{0<t<T}\|\widetilde u(t)\|_{L^\infty(\dom)}^p\bigg)<\infty
	\quad\mbox{for }p<\infty.
$$


\section{Examples}\label{sec.ex}

We present two examples that fulfill Assumptions (A3)--(A7). 

\subsection{Maxwell--Stefan systems}

Maxwell--Stefan equations describe the dynamics of fluid mixtures in the
diffusion regime. Applications include 
membrane electrolysis processes \cite{HVSKV01}, 
ion transport through nanopores \cite{BBN11}, 
and dynamics of lithium-ion batteries \cite{NBL08}.
Here, we consider an uncharged three-species mixture
with the concentrations $u_1$, $u_2$ and the solvent
concentration $u_3=1-u_1-u_2$. The diffusion matrix is given by
\begin{align*}
  & A(u) = \frac{1}{a(u)}\begin{pmatrix} 
	d_2+(d_0-d_2)u_1 & (d_0-d_1)u_1 \\ (d_0-d_2)u_2 & d_1+(d_0-d_1)u_2 \end{pmatrix}, \\
	&\mbox{where }a(u)=d_0d_1u_1 + d_0d_2u_2 + d_1d_2u_3,
\end{align*}
and $d_i>0$ for $i=0,1,2$ are diffusion coefficients \cite[Section 4.1]{Jue16}.
The matrix $A(u)$ is Lipschitz continuous on $\overline{\DD}$
since $a(u)$ is strictly positive and bounded from above (Assumption (A3)).
The entropy density is given by 
$$
  h(u) = \sum_{i=1}^3 {\big(u_i(\log u_i-1)+1\big)}.
$$
Its derivative $w=h'(u)=(\log(u_1/u_3),\log(u_2/u_3))^\top$ can be explicitly
inverted on $\DD$:
$$
  u_i = \frac{e^{w_i}}{1+e^{w_1}+e^{w_2}}, \quad i=1,2.
$$
Moreover, there exists $c>0$ such that for $z\in\R^n$,
$$
  z^\top h''(u)A(u)z = \frac{d_2z_1^2}{u_1a(u)} + \frac{d_1z_2^2}{u_2a(u)}
	+ \frac{d_0(z_1+z_2)^2}{u_3a(u)} 
	\ge c\bigg(\frac{z_1^2}{u_1} + \frac{z_2^2}{u_2}\bigg).
$$
Thus, Assumption (A5) is satisfied with $m=1/2$.

We choose the multiplicative noise 
$$
  \sigma(u) = \begin{pmatrix} u_1u_3 & 0 \\ 0 & u_2u_3 \end{pmatrix},
$$
where we recall that $u_3 = 1 - u_1 -u_2$. {This noise term guarantees that the
solutions stay in the Gibbs simplex a.s. Similar terms are well-known in
stochastic reaction-diffusion equations; see, e.g. \cite[(8)]{Kli11}.}
Then the expressions
\begin{align*}
  \bigg|\frac{\pa h}{\pa u_i}(u)\sigma_{ii}(u)\bigg| 
	&= \bigg|u_iu_3\log\frac{u_i}{u_3}\bigg|, \\
	\bigg|\sigma_{ii}(u)\frac{\pa\sigma_{ii}}{\pa u_i}(u)\frac{\pa h}{\pa u_i}(u)\bigg|
	&= \bigg|u_iu_3(u_3-u_i)\log\frac{u_i}{u_3}\bigg|, \\
	\bigg|\sigma_{ii}(u)\frac{\pa^2 h}{\pa u_i^2}(u)\sigma_{ii}(u)\bigg| 
	& = u_i u_3(u_i+u_3), \quad i=1,2,
\end{align*}
are bounded for $u\in\overline{\DD}$, proving Assumption (A6).
It remains to verify Assumption (A7). To simplify the notation, we set 
$u^\delta=(u_1^\delta,u_2^\delta)$ with $u_i^\delta:=[u_i]_\delta$.
and $u_3^\delta=1-u_1^\delta-u_2^\delta$.
We compute the elements $M^\delta_{ij}$ of the
matrix $h''(u^\delta)A(u)$:
\begin{align*}
  M^\delta_{11} &= \frac{1}{a(u)}\bigg(\frac{d_2}{u_1^\delta} 
	+ (d_0-d_2)\bigg(\frac{u_1}{u_1^\delta} - \frac{u_3}{u_3^\delta}\bigg)
	+ \frac{d_0}{u_3^\delta}\bigg), \\
	M^\delta_{12} &= \frac{1}{a(u)}\bigg((d_0-d_1)\bigg(\frac{u_1}{u_1^\delta} 
	- \frac{u_3}{u_3^\delta}\bigg) + \frac{d_0}{u_3^\delta}\bigg), \\
	M^\delta_{21} &= \frac{1}{a(u)}\bigg((d_0-d_2)\bigg(\frac{u_2}{u_2^\delta} 
	- \frac{u_3}{u_3^\delta}\bigg) + \frac{d_0}{u_3^\delta}\bigg), \\
	M^\delta_{22} &= \frac{1}{a(u)}\bigg(\frac{d_2}{u_1^\delta} 
	+ (d_0-d_1)\bigg(\frac{u_2}{u_2^\delta} - \frac{u_3}{u_3^\delta}\bigg)
	+ \frac{d_0}{u_3^\delta}\bigg).
\end{align*}
It holds for $z\in\R^n$ that
$$
  z^\top h''(u^\delta)A(u)z - c_h\sum_{i=1}^2\frac{z_i^2}{u_i^\delta}
	\ge z^\top R^\delta(u)z,
$$
where $c_h=\min\{d_0d_1,d_0d_2,d_1d_2\}>0$ and
\begin{align*}
  z^\top R^\delta(u)z &= \frac{d_0-d_2}{a(u)(1+\delta)^2}
	\bigg(\frac{u_1}{u_1^\delta}-\frac{u_3}{u_3^\delta}\bigg)z_1^2
	+ \frac{d_0-d_1}{a(u)(1+\delta)^2}
	\bigg(\frac{u_1}{u_1^\delta}-\frac{u_3}{u_3^\delta}\bigg)z_1z_2 \\
	&\phantom{xx}{}+ \frac{d_0-d_2}{a(u)(1+\delta)^2}
	\bigg(\frac{u_2}{u_2^\delta}-\frac{u_3}{u_3^\delta}\bigg)z_1z_2
	+ \frac{d_0-d_1}{a(u)(1+\delta)^2}
	\bigg(\frac{u_2}{u_2^\delta}-\frac{u_3}{u_3^\delta}\bigg)z_2^2.
\end{align*}
Since $u_i/u_i^\delta$ is bounded for $u\in\overline{\DD}$ and $i=1,2,3$, 
it follows that $R_\delta(u)\to 0$ as $\delta\to 0$ uniformly in $u\in\overline{\DD}$.
We infer that Assumption (A7) is fulfilled.


\subsection{Biofilm model}

Consider a fluid mixture consisting of $n$ concentrations $u_1,\ldots,u_n$ and
the solvent concentration $u_{n+1}$ such that $\sum_{i=1}^{n+1} u_i=1$. We suppose that
the concentrations are driven by the partial pressures $p_i=u_i$ ($i=1,\ldots,n$), 
while the solvent has the constant partial pressure $p_{n+1}$. 
Allowing for the presence of an interphase force and neglecting inertia effects,
a volume-filling cross-diffusion model with diffusion matrix $A(u)$, defined by
$$
  A_{ii}(u)=1-u_i, \quad A_{ij}(u) = -u_i\quad\mbox{for }i\neq j,
$$
was formally derived in \cite[Example 4.3]{Jue16} from an Euler system with
linear friction force. This model can be also used to describe the dynamics of 
a bacterial biofilm with subpopulations $u_1,\ldots,u_n$ and the volume fraction 
$u_{n+1}$ of ``free space'', in which the biofilm can expand \cite{Dav18}.
As in the previous example, we choose the entropy density and the noise term 
$$
  h(u)=\sum_{i=1}^{n+1} {\big(u_i(\log u_i-1)+1\big)}, \quad
	\sigma_{ii}(u) = u_iu_{n+1}, \quad \sigma_{ij}(u)=0\quad\mbox{for }i\neq j.
$$
The previous example has shown that Assumption (A6) is satisfied.
Assumption (A5) is fulfilled with $m=1/2$ since for all $u\in\DD$ and $z\in\R^n$,
$$
  z^\top h''(u)A(u)z = \sum_{i=1}^n\frac{z_i^2}{u_i}.
$$
It remains to check Assumption (A7). For this, we compute 
\begin{align*}
  & z^\top h''(u^\delta)A(u)z - \sum_{i=1}^n\frac{z_i^2}{u_i^\delta}
	= z^\top R_\delta(u)z, \quad\mbox{where} \\
  & R_\delta(u) = \sum_{i,j=1}^n
	\bigg(\frac{u_{n+1}}{u_{n+1}^\delta}-\frac{u_i}{u_i^\delta}\bigg)z_iz_j.
\end{align*}
It holds that $R_\delta(u)\to 0$ as $\delta\to 0$ uniformly in $u\in\overline\DD$.\\

\textbf{Acknowledgments.} The authors thank the referee for numerous suggestions and are grateful to Prof. Michael R\"ockner for very helpful comments. The first three authors acknowledge partial support from   
the Austrian Science Fund (FWF), grants I3401, P30000, W1245, and F65. The last two authors have been supported by a German Science Foundation (DFG) grant in the D-A-CH framework, grant KU 3333/2-1. The fourth author acknowledges partial support by a Lichtenberg Professorship funded by the VolkswagenStiftung and by the Collaborative Research Center 109 ``Discretization in Geometry and Dynamics'' funded by the DFG.
\begin{appendix}
\section{Technical results}\label{sec.tech}

For the convenience of the reader, we recall some technical results used
in this paper. Since we are working on the non-metric space $Z_T$, we need
Jakubowski's generalization of the Skorokhod theorem
in the form given in \cite[Theorem C.1]{BrOn10} (see \cite{Jak97} for 
the original theorem).

\begin{theorem}[Skorokhod--Jakubowski]\label{thm.skoro}
Let $Z$ be a topological space such that there exists a sequence $(f_m)_{m\in\N}$
of continuous functions $f_m:Z\to\R$ that separate points of $Z$. Let $S$ be the
$\sigma$-algebra generated by $(f_m)_{m\in\N}$. Then
\begin{enumerate}
\item Every compact subset of $Z$ is metrizable.
\item If $(\mu_m)_{m\in\N}$ is a tight sequence of probability measures on $(Z,S)$,
then there exists a subsequence $(\mu_{m_k})_{k\in\N}$, a probability space
$(\widetilde\Omega,\widetilde{\mathbb{F}},\widetilde\Prob)$, 
and $Z$-valued Borel measurable
random variables $\xi_k$ and $\xi$ such that {\rm (i)} $\mu_{m_k}$ is the law of $\xi_k$
and {\em (ii)} $\xi_k\to\xi$ almost surely on $\widetilde\Omega$.
\end{enumerate}
\end{theorem}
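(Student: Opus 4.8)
The plan is to reduce the statement to the classical Skorokhod representation theorem on the Polish space $\R^\N$ (equipped with the product topology) by encoding $Z$ through its separating sequence. I would introduce the evaluation map
\[
  F:Z\to\R^\N,\qquad F(z)=\big(f_m(z)\big)_{m\in\N}.
\]
Since every $f_m$ is continuous, $F$ is continuous; since $(f_m)$ separates points, $F$ is injective and $Z$ is Hausdorff; and because $S$ is generated by $(f_m)$, one has $S=F^{-1}(\mathcal B(\R^\N))$, so $F$ is $(S,\mathcal B(\R^\N))$-measurable. Part (1) is then immediate: if $K\subset Z$ is compact, then $F|_K\colon K\to F(K)$ is a continuous bijection from a compact space onto a Hausdorff space, hence a homeomorphism, and $F(K)$, as a subspace of the metrizable space $\R^\N$, is metrizable; therefore $K$ is metrizable.

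For part (2) I would push the measures forward and apply the classical theorem downstairs. Set $\nu_m:=\mu_m\circ F^{-1}$ on $\R^\N$. By tightness choose an increasing sequence of compacts $K_j\subset Z$ with $\sup_m\mu_m(Z\setminus K_j)\le 2^{-j}$; then $F(K_j)\subset\R^\N$ is compact and $\nu_m(F(K_j))\ge\mu_m(K_j)\ge 1-2^{-j}$, so $(\nu_m)$ is tight on the Polish space $\R^\N$. The classical Skorokhod theorem (via Prokhorov) yields a subsequence with $\nu_{m_k}\rightharpoonup\nu$, a probability space $(\widetilde\Omega,\widetilde{\mathbb F},\widetilde\Prob)$, and $\R^\N$-valued random variables $\eta_k,\eta$ with $\mathrm{law}(\eta_k)=\nu_{m_k}$, $\mathrm{law}(\eta)=\nu$, and $\eta_k\to\eta$ $\widetilde\Prob$-a.s. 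Writing $\Sigma:=\bigcup_j F(K_j)$, a $\sigma$-compact (hence Borel) subset of $F(Z)$, injectivity gives $\nu_{m_k}(\Sigma)=1$, while the portmanteau theorem applied to the closed sets $F(K_j)$ gives $\nu(\Sigma)=1$. On $\Sigma$ the inverse $F^{-1}$ is well defined and, being continuous on each compact piece $F(K_j)$, Borel measurable into $Z$; hence $\xi_k:=F^{-1}(\eta_k)$ and $\xi:=F^{-1}(\eta)$ are $Z$-valued Borel random variables, and the pushforward identities yield $\mathrm{law}(\xi_k)=\mu_{m_k}$.

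The hard part is the last step: the classical theorem only delivers $\eta_k\to\eta$ in $\R^\N$, that is, $f_m(\xi_k)\to f_m(\xi)$ for every $m$, and one must upgrade this coordinatewise convergence to convergence $\xi_k\to\xi$ in the possibly non-metrizable topology of $Z$. The leverage is again part (1): on any fixed compact $K_j$ the map $F$ is a homeomorphism, so within $K_j$ coordinatewise convergence is genuine $Z$-convergence. I would therefore aim to show that, $\widetilde\Prob$-almost surely, the trajectory $\{\xi_k\}_{k}$ together with $\xi$ eventually lies in one common compact $K_j$; since $\R^\N$ is not locally compact, a convergent sequence need not be trapped in a compact a priori, and preventing this escape to infinity is exactly the delicate point. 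The natural mechanism is a Borel--Cantelli argument exploiting the summable tails $\widetilde\Prob(\xi_k\notin K_j)=\mu_{m_k}(Z\setminus K_j)\le 2^{-j}$ combined with the fact that $\eta\in\Sigma$ pins the limit into a single $F(K_J)$; making this rigorous so that a.s.\ the whole tail of $(\eta_k)$ stays in a fixed $F(K_{J'})$, and then restricting to that compact and invoking the homeomorphism from part (1), is the technical heart of Jakubowski's argument and would complete the proof. Once the trajectory is confined to a fixed compact metrizable piece, $\xi_k\to\xi$ in $Z$ $\widetilde\Prob$-a.s.\ follows, establishing (2).
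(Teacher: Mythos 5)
Your part (1) is correct, and it is the standard argument (a continuous injection of a compact set into the metrizable space $\R^\N$ is a homeomorphism onto its image); note that the paper itself does not prove this theorem at all — it quotes it from Brze\'zniak--Ondrej\'at and Jakubowski — so the comparison here is with Jakubowski's original proof. Everything in your part (2) up to the last step (tightness of the pushforwards, concentration of $\nu$ on $\Sigma$, Borel measurability of $F^{-1}$ on $\Sigma$, identification of the laws) is also sound.

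The gap is the final step, and it is not a repairable technicality: the strategy of applying the classical Skorokhod theorem on $\R^\N$ as a black box and pulling back cannot be completed. First, the Borel--Cantelli mechanism you propose is unavailable: your tail bounds $\widetilde\Prob(\xi_k\notin K_j)\le 2^{-j}$ are uniform in $k$, so for fixed $j$ the series $\sum_k\widetilde\Prob(\xi_k\notin K_j)$ diverges, and nothing confines the tail of the sequence to one fixed compact. Second, and more seriously, the statement you would need (that the tail of $(\eta_k)$ a.s.\ stays in a fixed $F(K_j)$, or at least that the pullbacks converge) is \emph{false} for some legitimate outputs of the classical theorem. Take $Z=\ell^2$ with the weak topology, $f_m=\langle\cdot,e_m\rangle$, and $\mu_k=(1-\tfrac1k)\delta_0+\tfrac1k\,\delta_{k e_k}$. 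This sequence is tight (use the finite, hence compact, sets $K_j=\{0\}\cup\{k e_k:k\le 2^j\}$), and $F(k e_k)\to F(0)$ coordinatewise, so \emph{every} representation $(\eta_k,\eta)$ of the pushforwards satisfies $\eta_k\to\eta=F(0)$ a.s.\ automatically. But a representation may realize the mass $\tfrac1k$ on mutually independent events $A_k$ with $\widetilde\Prob(A_k)=\tfrac1k$; since $\sum_k\tfrac1k=\infty$, the second Borel--Cantelli lemma gives $\widetilde\Prob(\limsup_k A_k)=1$, so a.s.\ $\xi_k=F^{-1}(\eta_k)=k e_k$ for infinitely many $k$, and $k e_k\not\to 0$ weakly in $\ell^2$ (test against $(1/j)_{j\ge1}\in\ell^2$). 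Thus the pullbacks fail to converge a.s., even though all your intermediate claims hold. The reduction to $\R^\N$ discards exactly the information (the finer topology of $Z$) that the conclusion requires, and no further subsequence extraction downstream of the black box can recover it. This is why Jakubowski's proof does not pull back a ready-made representation: it \emph{constructs} the representation directly, running the classical partition-based Skorokhod construction with cells that are small $\nu$-continuity subsets of the compacts $K_j$ (metrizable by part (1)), so that with probability at least $1-2^{-j}$ the variables $\xi_k$, for $k$ large, and $\xi$ are forced into the same small cell of the same compact; only then does metrizability of $K_j$ convert closeness of coordinates into convergence in $Z$. Your write-up honestly flags where the difficulty sits, but the route you chose cannot reach it.
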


The following result is proved in \cite{Kur66} (also see \cite[Theorem B2]{BrDh19}).

\begin{theorem}[Kuratowski]\label{thm.kura}
Let $X$ be a separable complete metric space, $Y$ a Borel set of $X$, and
$f:Y\to X$ a one-to-one Borel measurable mapping. Then for any Borel set
$B\subset Y$, the image $f(B)$ is a Borel set.
\end{theorem}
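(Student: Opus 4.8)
The plan is to recognize this as the classical Lusin--Souslin theorem and to prove it within the theory of analytic sets, treating $X$ as a Polish space and $B\subseteq Y\subseteq X$ as a standard Borel space: since $Y$ is Borel in $X$, the Borel subsets of $Y$ are exactly the Borel subsets of $X$ that are contained in $Y$. Observe that $f(B)$ is automatically \emph{analytic}, being a Borel image of a Borel set. The entire content of the statement is therefore that injectivity of $f$ upgrades this analytic image to a genuinely Borel set, and the mechanism for this upgrade is a separation argument for disjoint analytic sets.

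\textbf{Reduction to a continuous injection.} Using the standard change-of-topology theorems of descriptive set theory, I would first refine the topology of $X$ to a finer Polish topology generating the same Borel $\sigma$-algebra, with respect to which $B$ becomes clopen and $f|_B$ becomes continuous. Since the Borel structure is unchanged, proving that the image is Borel in the refined topology proves it in the original one. After this reduction it suffices to establish the following: if $Z$ is Polish and $g\colon Z\to X$ is continuous and injective, then $g(Z)$ is Borel (apply this with $Z=B$ carrying the refined topology and $g=f|_B$).

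\textbf{Lusin scheme and separation.} This is the heart of the proof. Fix a complete compatible metric on $Z$ and construct a family of Borel sets $\{Z_s:s\in\N^{<\N}\}$ with $Z_\emptyset=Z$, with the sets $Z_{s^\frown n}$ ($n\in\N$) pairwise disjoint of union $Z_s$ and of diameter at most $2^{-|s|}$. Because $g$ is injective and the sets at each level are pairwise disjoint, the analytic images $g(Z_{s^\frown n})$ are pairwise disjoint as well. Applying the generalized Lusin separation theorem (countably many pairwise disjoint analytic sets can be enclosed in pairwise disjoint Borel sets), and intersecting successively with the closures $\overline{g(Z_s)}$ and with the enclosures already chosen at earlier levels, I obtain Borel sets $B_s\supseteq g(Z_s)$ that are nested ($B_{s^\frown n}\subseteq B_s$), pairwise disjoint at each level, and of shrinking diameter. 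One then verifies the identity
\[
  g(Z)=\bigcap_{k\ge 0}\ \bigcup_{|s|=k}B_s,
\]
whose right-hand side is manifestly Borel. The inclusion ``$\subseteq$'' is immediate; for ``$\supseteq$'' a point $y$ in the right-hand side selects, by the shrinking diameters and completeness, a unique infinite branch whose sets $Z_s$ have $g$-images closing down on $y$, and injectivity together with the control by the closures $\overline{g(Z_s)}$ forces $y\in g(Z)$.

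\textbf{Main obstacle.} I expect the separation machinery to be the crux: the generalized Lusin separation theorem for countable families of pairwise disjoint analytic sets ultimately rests on the Souslin operation, and I would invoke it as a known result (e.g.\ from Kechris, \emph{Classical Descriptive Set Theory}) rather than reprove it here. The second delicate point is the bookkeeping in the scheme, namely keeping the Borel enclosures $B_s$ simultaneously nested, level-disjoint, and diameter-controlled; this demands a careful induction on $|s|$ in which each raw separating set is intersected with a closure and with the enclosures fixed at previous stages, so that all three properties are maintained along every branch.
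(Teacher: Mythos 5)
The paper does not actually prove this statement: Theorem \ref{thm.kura} is recalled in Appendix \ref{sec.tech} purely as a citation to Kuratowski's \emph{Topology I} \cite{Kur66} and to \cite[Theorem B2]{BrDh19}. So there is no in-paper argument to compare against; what you have done is reconstruct the classical proof of the Lusin--Souslin theorem (as in Kechris, \emph{Classical Descriptive Set Theory}, Theorem 15.1), which is indeed the proof behind the cited result. Your outline is correct in its essentials: reduce by a change of topology to a continuous injection $g$ on a Polish space $Z$, build a Lusin scheme $(Z_s)_{s\in\N^{<\N}}$ of Borel sets with shrinking diameters, use injectivity to make the analytic images $g(Z_{s^\frown n})$ pairwise disjoint, enclose them in pairwise disjoint Borel sets via the generalized Lusin separation theorem, and recover $g(Z)=\bigcap_{k\ge 0}\bigcup_{|s|=k}B_s$. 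This is a genuine proof where the paper offers only a reference, and it correctly identifies the separation theorem as the crux.

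Two details of your write-up deserve repair, though neither is fatal. First, in the change-of-topology step you should refine the topology on the domain side only: $B$, being a Borel subset of a Polish space, is a standard Borel space and carries a finer Polish topology with the same Borel sets for which $f|_B$ is continuous \emph{into $X$ with its original topology}. Refining the topology of the ambient $X$ --- which here is simultaneously the codomain --- is more than you need and creates a chase: once the codomain is refined, the continuity you just arranged is no longer into the refined space, and one must iterate refinements and pass to a supremum topology. Continuity into the original (coarser) topology is all the Lusin scheme requires. Second, the Borel enclosures $B_s$ cannot be taken ``of shrinking diameter'', as you assert: $g$ is continuous but not uniformly continuous, so $g(Z_s)$, and hence any Borel set squeezed between $g(Z_s)$ and $\overline{g(Z_s)}$, may have large diameter no matter how small $Z_s$ is. Fortunately this property is never used. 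In the inclusion ``$\supseteq$'', the point $y$ selects a unique branch $x\in\N^{\N}$ by level-wise disjointness and nestedness; choosing $z_k\in Z_{x|k}$ gives a Cauchy sequence (it is the $Z_s$, not the $B_s$, that shrink) with limit $z\in Z$, and continuity of $g$ at $z$ places $g(Z_{x|k})$, hence $\overline{g(Z_{x|k})}\supseteq B_{x|k}\ni y$, inside arbitrarily small balls around $g(z)$, forcing $y=g(z)$. Note also that injectivity enters only through the disjointness of the analytic images fed into the separation theorem; it plays no role in this final convergence argument, contrary to what your last sentence suggests.
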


The Wong--Zakai approximations converge to the Wiener process. This was proved in
\cite{WoZa65} in the one-dimensional case, extended in \cite{StVa72} to
higher dimensions, and unified in \cite[Chapter 6, Theorem 7.2]{IkWa89}.

\begin{theorem}[Convergence of Wong--Zakai approximations]\label{thm.wong} 
Let $X^{(\eta)}$ be the solutions to the family of ODEs, indexed by the random
variable $\omega\in\Omega$, on a finite-dimensional vector space $H$,
$$
  \textnormal{d}X^{(\eta)}(t) = a(X^{(\eta)}(t),t)\,\textnormal{d}t + b(X^{(\eta)}(t),t)\,\textnormal{d}W^{(\eta)}(t), \
	t\in[0,T], \quad X^{(\eta)}(0)=X^0,
$$
where $W^{(\eta)}$ are the 
Wong--Zakai approximations \eqref{2.wong} of a Wiener process with time step
$\eta>0$; $a(X,\cdot)$, $b(X,\cdot)$, $(\pa b/\pa t)(X,\cdot)$, and
$(\pa b/\pa X)(X,\cdot)$ are continuous; and $a(\cdot,t)$, $b(\cdot,t)$, and
$(\pa b/\pa X)(\cdot,t)$ are Lipschitz continuous (and consequently grow at most
linearly). Furthermore, let $X$ be a solution to the Stratonovich stochastic
differential equation
$$
  \textnormal{d}X(t) = a(X(t),t)\,\textnormal{d}t + b(X(t),t)\circ \,\textnormal{d}W(t), \ t\in[0,T], \quad
	X(0)=X^0.
$$
Then
$$
  \lim_{\eta\to 0}\E\bigg(\sup_{0<t<T}\|X^{(\eta)}(t)-X(t)\|_H^2\bigg) = 0.
$$
\end{theorem}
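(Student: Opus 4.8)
The plan is to reduce the convergence to a direct strong $L^2$ comparison of the two integral equations, the crux being to show that the piecewise-linear noise reproduces in the limit both the It\^o stochastic integral and the Stratonovich correction drift. First I would rewrite the target Stratonovich equation in its equivalent It\^o form
$$
  \textnormal{d}X(t) = \Big(a(X(t),t) + \tfrac12 c(X(t),t)\Big)\textnormal{d}t + b(X(t),t)\,\textnormal{d}W(t),
	\qquad c_i = \sum_{j,k} b_{jk}\frac{\pa b_{ik}}{\pa X_j},
$$
so that both $X$ and $X^{(\eta)}$ become genuine integral equations driven by the same increments. Writing $\Delta W_k = W(t_{k+1})-W(t_k)$ and letting $k(s)$ be the index with $s\in[t_{k(s)},t_{k(s)+1})$, the polygonal derivative is the constant $\dot W^{(\eta)}=\Delta W_{k(s)}/\eta$ on each grid cell, so $X^{(\eta)}$ solves the pathwise ODE
$$
  X^{(\eta)}(t) = X^0 + \int_0^t a(X^{(\eta)}(s),s)\,\textnormal{d}s
	+ \int_0^t b(X^{(\eta)}(s),s)\,\frac{\Delta W_{k(s)}}{\eta}\,\textnormal{d}s.
$$

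The heart of the argument is the subinterval expansion of this last integral. On $[t_k,t_{k+1}]$ I would Taylor-expand $b(X^{(\eta)}(s),s)$ around the left endpoint, using the ODE itself to write $X^{(\eta)}(s)-X^{(\eta)}(t_k)\approx b(X^{(\eta)}(t_k),t_k)\,\Delta W_k(s-t_k)/\eta$. The leading contribution then reduces to $b(X^{(\eta)}(t_k),t_k)\,\Delta W_k$, while the first-order term contributes $(\pa b/\pa X)\,b$ multiplied by $\frac{(\Delta W_k)^2}{\eta^2}\int_{t_k}^{t_{k+1}}(s-t_k)\,\textnormal{d}s=\tfrac12(\Delta W_k)^2$. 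Summing over $k$, the leading terms converge to the It\^o integral $\int_0^t b(X)\,\textnormal{d}W$, whereas the quadratic terms, using $\E[(\Delta W_k)^2]=\eta$ and the independence of increments, converge to the correction drift $\tfrac12\int_0^t c(X(s),s)\,\textnormal{d}s$; this is exactly the mechanism by which the Stratonovich correction emerges. The continuity hypotheses on $\pa b/\pa t$ and $\pa b/\pa X$ are precisely what legitimize the expansion: the explicit time-dependence produces an extra term proportional to $\frac{\pa b}{\pa t}\,\Delta W_k\,\eta$, which carries a surplus factor of $\eta$ and is therefore negligible.

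To make this rigorous I would set $Z^{(\eta)}=X^{(\eta)}-X$ and estimate $\E\sup_{0\le t\le T}\|Z^{(\eta)}(t)\|_H^2$. A preliminary Gronwall argument using the linear growth of $a$, $b$, $\pa b/\pa X$ yields the uniform moment bound $\sup_\eta\E\sup_{t}\|X^{(\eta)}(t)\|_H^2<\infty$, needed to control the higher-order remainders. The drift differences are handled by Lipschitz continuity, and the It\^o-integral contributions by the Burkholder--Davis--Gundy inequality, so that every difference splits into a part dominated by $\int_0^t\E\sup_{r\le s}\|Z^{(\eta)}(r)\|_H^2\,\textnormal{d}s$ and a remainder that vanishes as $\eta\to0$; Gronwall's lemma then closes the estimate.

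The main obstacle I anticipate is the rigorous control of the fluctuation terms $\sum_k\big((\Delta W_k)^2-\eta\big)(\pa b/\pa X)\,b$, which must be shown to vanish in $L^2$ rather than merely in mean: each summand has mean zero with variance $O(\eta^2)$, and since the $M=T/\eta$ summands are suitably uncorrelated, their total variance is $O(\eta)\to0$. Coupled with the uniform bound on the Taylor remainders on each cell, this is the delicate estimate that upgrades the formal computation above to genuine strong $L^2$ convergence. Because $H$ is finite-dimensional and all coefficients are globally Lipschitz with bounded derivatives, no additional compactness is required, and one obtains the stated limit $\lim_{\eta\to0}\E\big(\sup_{0<t<T}\|X^{(\eta)}(t)-X(t)\|_H^2\big)=0$.
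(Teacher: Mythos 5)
The paper never proves this theorem: it is recalled in the appendix as a classical result, with the proof deferred to Wong--Zakai \cite{WoZa65}, Stroock--Varadhan \cite{StVa72}, and Ikeda--Watanabe \cite[Chapter 6, Theorem 7.2]{IkWa89}, so there is no internal proof to compare against. Your sketch reconstructs precisely the strategy of those references --- rewrite the Stratonovich equation in It\^o form, expand $b$ on each grid cell so that the leading terms assemble the It\^o integral while the quadratic terms $(\Delta W_k)_l(\Delta W_k)_m$ generate the correction drift, show that the mean-zero fluctuations $\sum_k\big((\Delta W_k)_l(\Delta W_k)_m-\eta\delta_{lm}\big)G_k$ form a martingale with total variance $O(\eta)$, and close with a Gronwall argument --- so at the level of strategy it agrees with the cited proofs, and the mechanism you identify for the emergence of the Stratonovich correction (including the vanishing of the off-diagonal cross terms, which is exactly why polygonal approximations need no L\'evy-area correction) is the right one.

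There is, however, one step that fails as written: the claim that ``a preliminary Gronwall argument using the linear growth of $a$, $b$, $\partial b/\partial X$ yields the uniform moment bound $\sup_\eta\E\sup_t\|X^{(\eta)}(t)\|_H^2<\infty$.'' A pathwise Gronwall estimate applied to the ODE treats $b(X^{(\eta)}(s),s)\,\Delta W_{k(s)}/\eta$ as a drift whose effective Lipschitz constant on the $k$-th cell is of order $1+|\Delta W_k|/\eta$, and integrating this over the cell and compounding over all $M=T/\eta$ cells produces a bound of the form $\|X^{(\eta)}\|\le C(1+\|X^0\\|)\exp\big(C\sum_k|\Delta W_k|\big)$. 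Since $\E\sum_k|\Delta W_k|\sim T/\sqrt{\eta}$, the expectation of this bound explodes as $\eta\to 0$; the moment bound is therefore \emph{not} a preliminary fact but must itself be proved by the same cell-by-cell expansion and discrete-martingale estimates you use for the convergence (so that the per-cell exponential factors $e^{C|\Delta W_k|}$ appear only inside remainders of size $O(\eta^{3/2})$ per cell and are never compounded across cells), followed by a \emph{discrete} Gronwall inequality. A related smaller point: since $(\partial b/\partial X)\,b$ grows quadratically in $X$, the variance estimate for your fluctuation sum requires uniform \emph{fourth} moments of $X^{(\eta)}$, not just second moments; these follow from the same discrete argument. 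With the moment bounds obtained this way, the rest of your outline goes through and coincides with the proof in the references the paper cites.
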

\end{appendix}


\end{document}